\numberwithin{equation}{section}
\newtheorem{theorem}{Theorem}[section]
\newtheorem{definition}[theorem]{Definition}
\newtheorem{lemma}[theorem]{Lemma}
\newtheorem{proposition}[theorem]{Proposition}
\newtheorem{corollary}[theorem]{Corollary}
\newtheorem{remark}[theorem]{Remark}
\newcommand{\R}{\mathbb{R}} 
\newcommand{\st}{\, \hat{\otimes} \,}
\newcommand{\bH}{V} 
\newcommand{\N}{\mathbb{N}}
\newcommand{\Z}{\mathbf{Z}}
\newcommand{\ZZ}{\mathbb{Z}}
\title{Generalized Burgers equation with rough transport noise}
\author{Antoine Hocquet, Torstein Nilssen and Wilhelm Stannat \thanks{Institute of Mathematics, Technical University of Berlin, Germany, Financial support by the DFG via Research Unit FOR 2402 is gratefully acknowledged.} }
\begin{document}

\maketitle

\abstract{We introduce a new technique for studying well posedness and energy estimates for evolution equations with a rough transport term. The technique is based on finding suitable space-time weight functions for the equations at hand. As an example we study the well posedness of the generalized viscous Burgers equation perturbed by a rough path transport noise.}

%\tableofcontents

\section{Introduction}

Originally the equation by Burgers,
$$
\partial_t u = \nu \partial_x^2 u - u \partial_x u ,
$$
was introduced as a simplified 1-dimensional model of turbulence in the motion of a fluid by neglecting certain terms in, e.g., the Navier Stokes equation. The equation has since been used as a model of many physical phenomena, e.g.\ motion of gas and traffic to name a few, for more examples see \cite{Burgers}.

The motivation for adding noise to this equation is twofold.
One the one hand, the simplified nature of the model could motivate adding randomness to compensate for the neglected terms.  On the other hand, one could hope to model turbulence by adding a highly oscillating noisy term.
There are several ways to introduce noise in the equation, e.g.\ one could imagine a forcing term of stochastic type, i.e.\ additive as studied in \cite{BertiniCancriniJona-Lasinio}, \cite{GyongyNualart}, \cite{DaPratoDebusscheTemam}.

We choose to consider a multiplicative noise in the equation, more specifically on transport form. If we consider the solution of the equation as a velocity field, then the reason for choosing the transport noise can be motivated from the Lagrangian viewpoint. Indeed, assume that the position $\phi_t(x)$ at time $t$ of a fluid particle starting at $x,$ can be decomposed into a regular component and a highly oscillating turbulent term as follows:
$$
\dot{\phi}_t(x) = u_t(\phi_t(x))  - \beta_j(\phi_t(x)) \dot{Z}_t^j,  \quad \phi_0(x) =x.
$$
Here, and for the rest of the paper we use the convention of summation over repeated indicies. Then, the quantity $u$ is  
transported along the trajectories of the above ODE, which motivates the study of the equation
$$
\partial_t u = \nu \partial_x^2 u - u \partial_x u	 + \beta_j \partial_x u \dot{Z}_t^j .
$$

More generally, we consider a multidimensional version of the equation, i.e.\  the generalized Burgers equation
\begin{equation} \label{MainEq}
\partial_t u_t  = \Delta u_t + \mathrm{div} (F(u_t)) + \beta_j \nabla u_t \dot{Z}_t^j
\end{equation}
defined on $\R^d$ and we have let $\nu =1$ for simplicity. Above, we are given an initial condition $u_0 \in L^2(\R^d)$ and sufficiently regular functions $F: \R \rightarrow \R^d$, $\beta_j: \R^d \rightarrow \R^d$ and we assume $Z$ can be lifted to a rough path $\Z = (Z, \ZZ)$.

We choose to work in a variational framework and aim to prove (local) energy estimates of the form
\begin{equation} \label{FiniteEnergySolutions}
\sup_{ 0 \leq t \leq T_{F}} \| u_t \|^2_{L^2} + \int_0^{T_F} \| \nabla u_r \|^2_{L^2} 
\leq C(\| u_0 \|^2_{L^2}),
\end{equation}
for some maximal time $T_F\in(0,T)$ depending on the type of non-linearity considered, and where the right hand side denotes a constant depending on the norm of the initial datum, the vector fields $\beta_j$ and the rough path norm of $\Z$ only.
Solutions satisfying \eqref{FiniteEnergySolutions} for will be refered to as ``finite-energy solutions''. 
Whether such solutions can be global (i.e.\ such that $T=T_F$) depends of course on the choice of $F.$
For simplicity, we will split our analysis into the two following cases of interest:
\begin{itemize}
 \item [-]First we will address the case where $F:\R^d\to\R$ has a bounded derivative, in which case we whall obtain finite-energy solutions on the whole time interval $[0,T].$
 \item [-]
Second, we will consider $d=1$ and $F(u) =  - \frac{1}{2} u^2,$ which then corresponds the classical Burgers non-linearity.
The energy estimates, and therefore the existence and uniqueness of finite-energy solutions, will be then shown to hold locally in time.
\end{itemize}

In the classical setting, i.e.\ when $Z$ is a smooth path, the energy estimates are usually obtained as follows:
multiply formally the equation \eqref{MainEq} by the solution $u$ itself and integrate in space to find
$$
\partial_t \|u_t\|_{L^2}^2 = - 2\|\nabla u_t\|^2_{L^2} - 2(F(u_t),\nabla u_t)  - ( u_t^2, \mathrm{div}(\beta_j)) \dot{Z}_t^j .
$$ 
Integrated in time gives
\begin{align}
\|u_t\|_{L^2}^2  + & 2 \int_0^t \|\nabla u_r\|^2_{L^2} dr  = \|u_0\|_{L^2}^2 - 2 \int_0^t  (F(u_r),\nabla u_r)  - \frac{1}{2}( u_r^2, \mathrm{div}(\beta_j)) \dot{Z}_r^j dr \label{ClassicalEnergyEquality} \\
 & \leq \|u_0\|_{L^2}^2 +   \int_0^t \| \nabla u_r\|^2_{L^2} dr +  \int_0^t \|F(u_r)\|_{L^2}^2  +  \|\mathrm{div}( \beta_j )\|_{\infty} \|u_r\|_{L^2}^2  |\dot{Z}_r^j| dr \notag .
\end{align}
In the first case scenario (i.e.\ when $\|\nabla F\|_{L^\infty}<\infty$), one can then use Gronwall Lemma to obtain the inequality
\begin{align} \label{ClassicalEnergyInequality}
\|u_t\|_{L^2}^2  +  \int_0^t \|\nabla u_r\|^2_{L^2} dr \lesssim  \|u_0\|_{L^2}^2 \exp \Big\{ \|\mathrm{div}(\beta_j)\|_{\infty} \int_0^t |\dot{Z}_r^j|dr \Big\} .
\end{align}
Concerning the classical Burgers equation, a similar bound is obtained by using the fact that the non-linearity is conservative --- it is indeed sufficient to observe that $(u \partial_x u, u) = 0.$
Hence finite-energy solutions turn out to be also global in that case.

Note that the operation of ``testing the equation against itself'' (or what is the same ``multiplying the equation by $u$''),
is not at all straightforward and needs some justification. Namely, if one understands \eqref{MainEq} as an equation in the sense of distributions, one needs to show that, roughly speaking, the space of ``admissible'' test functions contains the solution. 

Such a justification becomes quite more involved as one considers a ``rough signal'' $Z,$ namely such that $Z\in C^{\alpha }$ with $\alpha \leq 1/2.$ In this case, the presence of the unbounded operation $\beta _i\nabla u $ in the rough perturbation increases drastically the number of derivatives needed for a test function to be admissible. Indeed, three derivatives in space will be then needed, a regularity that is far from being satisfied by $u,$ which lies a priori in the energy space $L^\infty([0,T];L^2)\cap L^2([0,T];H^1).$ This is in contrast with the smooth case, where only one derivative is needed.
In addition, the expression \eqref{ClassicalEnergyInequality} does not even make sense for irregular $Z$, so it is also not clear that a Gronwall-like argument could be used.

Both of these problems have been solved for the linear case (\cite{BG}, \cite{DGHT}, \cite{HH}). Using a tensorization method paired with commutator estimates inspired by \cite{DiPernaLions}, it was possible to deduce the equation satisfied by $u_t^2$. In \cite{DGHT} a suitable version of the Gronwall's lemma was introduced.

The main contribution of this paper is to introduce a weighted measure space that is useful for studying non-linear equations. For the case of $\nabla F$ bounded, techniques introduced in \cite{DGHT} would be sufficient; the drift term in the above equation will be sufficiently regular so that the so-called "rough Gronwall lemma" could be applied. For the classical Burgers non-linearity however, one would need a non-linear version of this lemma, i.e.\ a type of rough Bihari-LaSalle inequality. While it is plausible that such an inequality could exist, the method in the present paper allows us to obtain local solutions in time using the classical Bihari-LaSalle inequality.

It should be mentioned that the work \cite{HLN} consider the Navier-Stokes equation in the same framework as the present paper. However, there it is assumed that the vector fields $\beta_j$ are energy preserving, i.e.\ divergence free, which is the physically correct noise for the Navier-Stokes equation. In this case there is no contribution of the noise to the energy. Hence, there is no need to introduce the same measure-change as in the present paper, which is in fact seen from our approach in Proposition \ref{MeasureChange}.

\section{Notation and definitions}

\subsection{H\"{o}lder and Sobolev spaces}

For a fixed time horizon $T> 0,$ we define the simplex $\Delta(T) := \{ (s,t) \in [0,T]^2 :  s \leq t \}$.
Given a Banach space $(E,|\cdot |)$ and $\alpha > 0,$ a mapping $g : \Delta(T) \rightarrow E$ will be said to be $\alpha$-H\"{o}lder continuous provided
for some $L\in(0,T]:$
$$
[g]_{\alpha,L} = \sup_{ (s,t) \in \Delta(T) : |t-s| \leq L  } \frac{|g_{st}|}{|t-s|^{\alpha}} < \infty,
$$
which defines a family of semi-norms $([\cdot ]_{\alpha ,L})_{L\in (0,T]}.$
Although in the sequel we might take $L\neq T$ for convenience, these semi-norms are clearly all equivalent. Therefore, if $L>0$ is clear from the context, we shall abuse notation and write $[g]_\alpha $ instead.
The space of $\alpha$-H\"{o}lder continuous functions will be denoted by $C^{\alpha}_2([0,T];E),$ 
and similarly we let $C^{\alpha}([0,T];E)$ be the space of all $f :[0,T] \rightarrow E$ such that $\delta f \in C^{\alpha}_2([0,T];E),$ where we have defined  
\[
\delta f_{st} : = f_t - f_s.
\]
For a two-parameter function $g: \Delta(T) \rightarrow E$, we also define the second order increment 
$$
\delta g_{s \theta t} := g_{st} - g_{ \theta t} - g_{s \theta}.
$$

We shall work with the usual 
Sobolev spaces $W^{n,p}(\R^d)$ with norm denoted by $| \cdot \nobreak |_{n,p}$. For simplicity we denote by 
$H^n := W^{n,2}(\R^d)$ and the corresponding norm $|\cdot\nobreak  |_{n}$. For smooth and compactly supported functions $f$ and $g$ on $\R^d$, denote by $(f,g) = \int_{\R^d} f(x) g(x) dx$ and by the same bracket the extension of the bilinear mapping
$$
(\cdot, \cdot ) : (W^{n,p}(\R^d))^* \times W^{n,p}(\R^d) \rightarrow \R.
$$

It is easy to see that the scale $(H^n)_{n\in\N}$ posess a family of continuous, linear mappings $J_\eta :L^2(\R^d)\to C^{\infty}(\R^d)$ such that:
\begin{equation}
\label{smoothing_op}
|J^{\eta} \phi|_{n+k} \lesssim \eta^{-k} |\phi|_n 
\quad \text{and}\quad 
|(I - J^{\eta}) \phi|_{n} \lesssim \eta^{k} |\phi|_{n+k} ,
\end{equation}
for $k=0,1,2$ and arbitrary $n\in \N.$
(Consider for instance $J^{\eta} \phi = \rho_{\eta} * \phi$ where $\rho_{\eta}$ is a mollifier such that $\rho_{\eta}(x) = \rho_{\eta}(-x)$.)
In the following, we shall refer to such $(J^{\eta})_{\eta \in [0,1]}$
as a family of \emph{smoothing operators}.

Throughout the paper, we shall restrict our attention to functions
$F: \R \rightarrow \R^d$ that induce a well-defined Nemytskii operator.
Namely, such that $\bar{F} : L^2(\R^d) \rightarrow (H^{-1})^d,$
where
\[
\bar{F}(u)(x) := F(u(x)),\quad x\in \R^d.
\]
For notational convenience, we shall not distinguish between $F$ and $\bar{F}$ in the remainder of the paper. In particular we have that 
\begin{equation}
\label{maps:F}
\mathrm{div} F : L^2(\R^d) \rightarrow H^{-2}
\end{equation}
is a well defined operation, via
\begin{equation}
\label{def:F}
(\mathrm{div} F(u), \phi) = -(F(u), \nabla \phi).
\end{equation}
Two different assumptions on $F:\R\to \R^d,$ both guaranteeing \eqref{maps:F}, will be considered.
First, we assume that $F$ is Lipshitz, i.e.\ there exists some constant (with an abuse of notation) $|\nabla F|_{\infty}$ such that $|F(x)-F(y)|\leq |\nabla F|_{\infty}|x-y|,$ for every $x,y\in\R.$
From the point of view of the Nemytskii operation defined by \eqref{def:F}, this implies in particular the estimate
\begin{equation}
\label{estim:F1}
|\mathrm{div}F(u) - \mathrm{div}F(v) |_{-1} \leq |\nabla F|_{\infty} |u - v |_0 .
\end{equation} 

Second, we shall consider the classical Burgers non-linearity, that is $F(u) = -\frac{1}{2} u^2$ and $d=1.$
In this case, we have for each $\phi \in H^2(\R):$
$$
|(u^2, \partial_x \phi) | \leq |u|_0^2 |\partial_x \phi|_{\infty} \leq |u|_0^2 \sqrt{2} |\partial_x \phi|_0^{1/2} |\partial_x^2 \phi|_0^{1/2} , 
$$
where we have used the well-known interpolation inequality
$$
(\psi(x))^2 = 2 \int_{\infty}^x \psi(y) \partial_x \psi(y) dy   \leq 2|\psi|_0 |\partial_x \psi|_0 \enskip \Longrightarrow  \enskip |\psi|_{0,\infty} \leq \sqrt{2} |\psi|_0^{1/2} |\partial_x \psi|_0^{1/2}. 
$$
Consequently, in this case it holds the following estimate for
the nonlinear operation defined by \eqref{def:F}:
\begin{equation}
\label{estim:F2}
|\partial _x(F(u)-F(v))|_{-2}\leq |u-v|_{0}(|u|_0+|v|_0).
\end{equation}

\subsection{Rough paths, formulation of the equation, and main result} \label{RoughPaths}

Given a smooth $J$-dimensional path $Z = (Z^1, \dots Z^J)$, we can define the iterated integral canonically by
\begin{equation} \label{IteratedIntegral}
\ZZ^{i,j}_{st} := \int_s^t \delta Z^i_{s r} \dot{Z}^j_r dr .
\end{equation}
In the case of an irregular path of, e.g.\ a sample path of the Brownian motion, the above definition does not make sense, since $Z$ is not differentiable but is $\alpha$-H\"{o}lder continuous for $\alpha$ arbitrarily close to $\frac{1}{2}$. In that case however, we could choose for instance to define the integration $\int Z^i dZ^j$ via It\^{o} or Stratonovich integral (yielding then two different definitions for $\ZZ$). 
In both cases, one can show that for almost all sample paths of the Brownian motion we have $\ZZ \in C^{ 2 \alpha}_2([0,T]; \R^{J \times J})$ and it holds the so-called Chen's relations
\begin{equation} \label{ClassicalChensRelation}
\delta \ZZ_{s \theta t}^{i,j} =  
\delta Z_{s \theta}^i \delta Z_{\theta t}^j ,\quad \text{for every}\enskip 0\leq s\leq \theta \leq t\leq T,\enskip
\end{equation}
and each $1\leq i,j\leq J$. 

Motivated by the above, we will say that a pair $\Z = ( Z , \ZZ)$ is a \emph{rough path}
provided \eqref{ClassicalChensRelation} holds,
together with the analytic condition:
\[
(Z,\ZZ)\in C^{\alpha}([0,T] ;\R^J) \times C^{  2 \alpha}_2([0,T]; \R^{J \times J}),
\]
for some $\alpha \in (\frac{1}{3}, \frac{1}{2}].$
We shall say that $\Z$ is \emph{geometric} if there exists a sequence $Z(n)$ of smooth paths such that
$\Z(n) \rightarrow \Z$ with respect to the metric induced by 
$
C^{\alpha}([0,T] ;\R^J)\times C_2^{ 2 \alpha}([0,T]; \R^{J \times J})
$
(in which the first factor is endowed with the norm $\|f\|_{C^\alpha }:=|f_0| + [\delta f]_{\alpha }$)
and where $\ZZ(n)$ is the ``canonical lift'' given by \eqref{IteratedIntegral} with $Z(n)$ instead of $Z.$
We will denote by $\mathscr{C}_g^{\alpha}$ the set of all geometric rough paths.

To formulate the equation, we briefly recall the method introduced in \cite{BG} and further developed in \cite{DGHT}. 
Assume a priori that we have a way of making sense of the integral $\int_s^t  \beta_i  \nabla u_{r} dZ^i_{r}$. We integrate \eqref{MainEq} in time and iterate the corresponding equation into itself to get
\begin{align}
&\delta u_{st}-  \int_s^t \big[\Delta u_{r} + \mathrm{div}F(u_{r})\big] dr \notag
\\
&= \int_s^t  \beta_j   \nabla  u_r dZ_r^j \label{roughIntegral}
\\
& =\int_s^t  \beta_j   \nabla   \left(u_s  + \int_s^r \big[\Delta u_{r_1} + \mathrm{div}F(u_{r_1})\big] dr_1  +   \int_s^r  \beta_i   \nabla u_{r_1} dZ^i_{r_1} \right) dZ_r^j \notag
\\
& = \beta_j \nabla    u_s Z^j_{st}
+  (\beta_j  \nabla)( \beta_i  \nabla)u_s\ZZ^{i,j}_{st} 
 +u^{\natural}_{st} \label{expansion}
\end{align}
where we have defined 
\begin{multline}\label{ExplicitRemainder}
u_{st}^{\natural} 
=   \iiint_{s\leq r_2\leq r_1\leq r\leq t}(\beta_j  \nabla) (\beta_i \nabla )(\beta_k   \nabla )u_{r_2} dZ_{r_2}^k dZ^i_{r_1} dZ^j_r
\\
+ \iint_{s\leq r\leq r_1\leq t}(\beta_j   \nabla )\big[\Delta u_{r_1} + \mathrm{div}F(u_{r_1}) \big]dr_1  dZ^j_r 
\\
+ \iiint_{s\leq r_2\leq r_1\leq r\leq t} (\beta_j  \nabla) (\beta_i  \nabla)\big[\Delta u_{r_2} + \mathrm{div}F(u_{r_2})\big] dr_2 dZ^i_{r_1} dZ^j_r . 
\end{multline}

We now argue that \eqref{ExplicitRemainder} takes 3 derivatives in space but has high time regularity. Indeed, for the first term, assuming only boundedness of $u$ in $L^2(\R^d)$, the term should take values in $H^{-3}$, but as for \eqref{IteratedIntegral}, the extra integral in time should give us a bound of order
$|t-s|^{3 \alpha}$. 
Similary, the second term should take values in $H^{-3}$, but should be bounded by $|t-s|^{1 + \alpha} \lesssim |t-s|^{3 \alpha}$ by assumption on $\alpha$.
The last term takes values in $H^{-4}$, but we should have even higher time regularity, i.e.\ it is bounded by $|t-s|^{1 + 2 \alpha}$. One can then use an interpolation argument to trade the extra derivative in space for the extra time regularity. In fact, in Lemma \ref{Existence} we will show rigorously that $u^{\natural}_{st}$ is bounded by $|t-s|^{ \zeta}$ for some $\zeta > 1$ as a mapping with values in $H^{-3}$.

A posteriori, from the uniqueness in Lemma \ref{sewingLemma}, the expansion \eqref{expansion} gives meaning to the expression \eqref{roughIntegral}.

Define now the operators
$$
A^1_{st}\phi : = \beta_j \nabla \phi \delta Z_{st}^j , \hspace{.5cm} A^2_{st}\phi : = \beta_j \nabla  (\beta_i \nabla \phi ) \ZZ_{st}^{i,j} . 
$$
We have that $A^{i}_{st}$ is bounded from $H^{n+i}$ to $H^{n}$ with an operator norm estimated as
$$
\| A_{st}^i \|_{ \mathcal{L}(H^{n+i} ; H^{n})} \lesssim |t-s|^{i \alpha} 
$$
for $i=1,2$ and $n=3,2,1,0$ and the constant in the above inequality depends on $[\delta Z]_\alpha,[\ZZ]_{2\alpha },$ as well as the norms of the vector fields $\beta_j$ in $C^4_b(\R^d).$ We denote by $[A]_{\alpha}$ the smallest constant satisfying the above bound. Moreover, we have the operator Chen's relation
$$
\delta A_{s \theta t}^2 = A_{\theta t}^1 A_{s \theta }^1\,,\quad \text{for every}\enskip 0\leq s\leq \theta \leq t\leq T.
$$

The above discussion suggests to consider equation \eqref{MainEq} as the following Taylor expansion type equation
\begin{equation} \label{MainEqURD}
\delta u_{st}  = \int_s^t \big[\Delta u_r + \mathrm{div} F(u_r)\big] dr + A_{st}^1 u_s + A_{st}^2 u_s + u_{st}^{\natural}
\end{equation}
on the scale of spaces $(H^n)_n$. More precisely, we give the following notion of a solution for \eqref{MainEq}.

\begin{definition}\label{def:solution}
A bounded path $u :[0,T] \rightarrow H^0$ is said to be a finite-energy solution to \eqref{MainEq} provided that:
\begin{enumerate}[label=(\roman*)]
 \item as a function of $(t,x),$ $u$ belongs to the ``energy space'', namely $u\in L^\infty([0,T];L^2(\R^d))\cap L^2([0,T]; H^1);$
 
 \item the remainder term $u^{\natural}$ which is defined implicitly by \eqref{MainEqURD}, belongs to $C_2^{1+}([0,T];H^{-3}),$ i.e.\ there exists $\zeta > 1$ such that for every $\phi \in H^3,$ and every $(s,t)\in \Delta (T),$ it holds
\end{enumerate}
\begin{equation} \label{RemainderCondition}
|(u_{st}^{\natural},\phi) | \lesssim |t-s|^{\zeta} |\phi|_{3} .
\end{equation}
\end{definition}

Our main result is the following.

\begin{theorem}\label{MainTheorem}
Let $d,J\geq 1$ be integers, fix an arbitrary time horizon $T>0,$ and and initial datum $u_0\in L^2(\R^d).$
Let $\alpha \in(1/3,1/2],$ and consider $\Z\in \mathscr C_g^\alpha([0,T];\R^J) ,$  as well as coefficients
$\beta _j\in C^6_b(\R^d)$ for $j=1,\dots,J.$

Let $F:\R\to\R$ be Lipshitz.
There exists a unique finite-energy solution
to
\begin{equation}
\label{burgers_theorem}
\partial _tu_t=\Delta u_t + \mathrm{div}(F(u_t)) + \beta _j \nabla u_t\dot Z^j
\end{equation} 
on $(0,T]\times \R^d.$

Next, if $d=1$ and $F(u):=-\frac12 u^2,$ there exists a maximal time $T_0\in (0,T]$ such that existence and uniqueness of finite-energy solutions $u$ to
\eqref{burgers_theorem} holds on $(0,T_0)\times \R^d.$
The value of $T_0$ depends only upon the quantities $|u_0|_{0},$ $\|\beta \|_{C^6_b(\R^d)},$ $[\delta Z]_\alpha ,$ $[\ZZ]_{2\alpha }.$
\end{theorem}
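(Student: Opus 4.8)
The plan is to prove the two assertions in tandem by means of the weighted-measure technique announced in the introduction, treating the Lipschitz case as a warm-up and the Burgers case as the genuine application. The skeleton is a standard Galerkin/compactness scheme combined with the sewing lemma (Lemma \ref{sewingLemma}) to handle the rough perturbation, but the novelty — and the place where I expect the real work to lie — is the passage to a space-time weighted $L^2$ space in which the transport noise becomes (essentially) skew-adjoint, so that the energy balance closes without needing a nonlinear rough Gronwall inequality.

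First I would set up the a priori estimates at the level of smoothed (or finite-dimensional) approximations $u^n$ of \eqref{MainEqURD}, where $Z$ is replaced by a smooth path $Z(n)\to\Z$ in the geometric rough-path topology. For these smooth approximants the classical computation in the introduction (testing the equation against $u^n$) is legitimate, yielding the energy identity \eqref{ClassicalEnergyEquality}. The key step is then Proposition \ref{MeasureChange}: I would introduce a positive weight $w^n_t(x)$, adapted to the characteristics of the transport field $\beta_j\dot Z(n)^j$, solving an auxiliary (backward) transport/continuity equation, so that the drift contribution $-(u^2,\mathrm{div}\beta_j)\dot Z^j$ is exactly absorbed into $\partial_t$ of the weighted norm $\int |u^n_t|^2 w^n_t\,dx$. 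One must check that $w^n$ and $1/w^n$ stay bounded uniformly in $n$ in terms of $\|\beta\|_{C^6_b}$, $[\delta Z]_\alpha$ and $[\ZZ]_{2\alpha}$ only — this uniform two-sided bound on the weight is, I expect, the main obstacle, since it is precisely here that the rough (rather than smooth) nature of $Z$ must be controlled, presumably via a rough-path estimate on the flow of $\dot\phi_t=-\beta_j(\phi_t)\dot Z^j_t$ and its Jacobian. Once this is in hand, in the Lipschitz case the weighted energy satisfies $\partial_t(\text{weighted }\|u^n_t\|^2) + c\|\nabla u^n_t\|^2 \lesssim \|\nabla F\|_\infty^2 (\text{weighted }\|u^n_t\|^2)$ after using \eqref{estim:F1} and Young's inequality, and linear Gronwall gives \eqref{FiniteEnergySolutions} on all of $[0,T]$, uniformly in $n$.

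For the Burgers nonlinearity with $d=1$ and $F(u)=-\tfrac12 u^2$, the drift is no longer controlled by $\|u\|_0^2$ alone: using \eqref{estim:F2} together with the interpolation inequality $|\psi|_{0,\infty}\le\sqrt2|\psi|_0^{1/2}|\partial_x\psi|_0^{1/2}$ recalled in the excerpt, the cross term $-(F(u^n),\nabla u^n)$ — which would vanish identically if the norm were unweighted, since $(u\partial_x u,u)=0$ — now produces, because of the weight, a term of the type $\int u^3\,\partial_x w^n$, bounded by $|w^n|_{C^1}\,|u^n|_{0,\infty}|u^n|_0^2 \lesssim |\nabla u^n_t|_0^{1/2}(\text{weighted }\|u^n_t\|^2)^{5/4}$; absorbing the half-derivative into the dissipation leaves $\partial_t y_t \le C\, y_t^{\,p}$ with $y_t$ the weighted energy and $p>1$. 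This is exactly the situation covered by the classical Bihari–LaSalle inequality, which yields a finite blow-up-free time $T_0$ and the bound \eqref{FiniteEnergySolutions} on $[0,T_0]$, with $T_0$ depending only on $|u_0|_0$, $\|\beta\|_{C^6_b}$, $[\delta Z]_\alpha$, $[\ZZ]_{2\alpha}$ through the constants above. Note $C^6_b$ regularity of $\beta$ (rather than $C^4_b$) is what is needed to differentiate the weight and still land in the scale where $A^1,A^2$ act boundedly.

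Finally I would run the compactness/convergence argument: the uniform energy bound gives weak-$*$ convergence of $u^n$ in $L^\infty_tL^2_x\cap L^2_tH^1_x$ (and, via the equation, equicontinuity in a negative Sobolev space, hence strong convergence in $L^2_{t,x,\mathrm{loc}}$ by Aubin–Lions), which suffices to pass to the limit in the nonlinear term $\mathrm{div}F(u^n)$ using \eqref{estim:F1}/\eqref{estim:F2}; the rough terms $A^1_{st}u_s$, $A^2_{st}u_s$ pass to the limit by continuity of $\Z\mapsto(A^1,A^2)$ in the geometric rough-path topology, and the remainder estimate \eqref{RemainderCondition} for the limit is obtained from the sewing lemma (Lemma \ref{sewingLemma}) applied to \eqref{MainEqURD}, using the interpolation-in-space-for-regularity-in-time argument sketched after \eqref{ExplicitRemainder} and carried out in Lemma \ref{Existence}. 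Uniqueness in both cases follows by writing the equation for the difference of two finite-energy solutions $u,v$, testing against $u-v$ in the \emph{same} weighted space, using the Lipschitz bound \eqref{estim:F1} (resp.\ the bilinear bound \eqref{estim:F2} together with the a priori energy bound on $[0,T_0]$ to control $|u|_0+|v|_0$), and invoking linear Gronwall (resp.\ Gronwall again, now that $y_t$ is a priori bounded so the superlinear term is effectively linear) — the rough Gronwall lemma of \cite{DGHT} being unnecessary precisely because the weight has rendered the noise contribution to the energy inert.
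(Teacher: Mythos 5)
Your overall architecture (a weight solving a backward dual equation with two-sided bounds controlled by $\|\beta\|_{C^6_b}$, $[\delta Z]_\alpha$, $[\ZZ]_{2\alpha}$; Gronwall for Lipschitz $F$; Gagliardo--Nirenberg plus Young plus classical Bihari--LaSalle for the Burgers case; smooth approximation of $\Z$, Aubin--Lions type compactness with localization, passage to the limit in the nonlinearity and in the rough terms) coincides with the paper's strategy. But there is a genuine gap at the point you dismiss most quickly: uniqueness. You propose to ``test the equation for $v=u^{(1)}-u^{(2)}$ against $v$ in the same weighted space'', yet for two arbitrary finite-energy solutions of the \emph{rough} equation this is exactly the ill-posed operation the whole paper is built to avoid: admissible test functions for \eqref{MainEqURD} need three spatial derivatives (cf.\ \eqref{RemainderCondition}), while $v_t m_t$ only lies in $H^1$ for a.e.\ $t$. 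Your device of proving the weighted energy identity at the level of smooth approximants $u^n$ and passing to the limit works for existence (the bound survives by lower semicontinuity), but it cannot be run for uniqueness, since the two given solutions are not limits of any approximation scheme you control. The paper closes this gap with the tensorization (doubling of variables) machinery: the It\^o formula for $u\otimes v$ (Proposition \ref{ItoTensorProduct}), the product formula (Lemma \ref{ItoInnerProduct}), the smooth tensor weight $M^N$ solving the backward tensored equation \eqref{TensorMeasureChangeEquation} with Feynman--Kac representation (Proposition \ref{TensorMeasureChangeProposition}), and, crucially, the a priori remainder estimates of Theorem \ref{aprioriEstimate} and Lemma \ref{aprioriLemmaFlat}, which bound $u^\natural$ and $u^\flat$ in terms of the energy norm alone so that the pairing $(v^{\otimes 2},M^N_t)$ makes sense and converges to $(v_t^2,m_t)$. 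None of this appears in your plan, and without it the weighted energy identity \eqref{EnergyForDifference} for differences of rough solutions --- hence uniqueness in both the Lipschitz and the Burgers case --- is unproved.

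Two smaller points. First, your weight is proposed as a solution of a pure backward transport/continuity equation, whereas the paper's $m$ solves \eqref{mFunction}, which contains the Laplacian; this is what makes $m$ the Kolmogorov dual of the $u^2$-equation so that the dissipative term appears as $2(|\nabla u|^2,m)$ with no leftover $(u^2,\Delta w)$ term, and what lets the Feynman--Kac formula \eqref{mFunctionFeynmanKac} deliver both the $C^4_b$ regularity and the two-sided bounds of Proposition \ref{MeasureChange}. A transport-only weight could probably be made to work via the rough flow and its Jacobian, but you would have to redo the regularity and boundedness estimates yourself rather than quote them. Second, the remainder bound for the limit solution does not come from applying the sewing lemma to the limit equation directly; it comes from the $n$-uniform estimate of Theorem \ref{aprioriEstimate} (itself proved via Corollary \ref{SewingCorollary} and the smoothing operators \eqref{smoothing_op}) applied to $u^{n,\natural}$, which then passes to the limit; your sketch conflates these two steps.
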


Before we proceed to the next section, let us briefly explain the strategy of proof.
\paragraph{Outline of the proof.}
The proof relies on a priori estimates in the energy space $L^\infty([0,T];L^2(\R^d))\cap L^2([0,T];H^1)$ which are either global or local (depending on $F$). 
In Section \ref{section:apriori} we start by showing that if we a priori have a remainder $u^{\natural}$ satisfying \eqref{RemainderCondition} then we can estimate the remainder in terms of $|u|_{L^{\infty}([0,T]; L^2(\R^d))}$ and the unbounded rough driver $(A^1,A^2)$. 
In Section \ref{section:EnergyEstimates} we find a weight function that equilibrates the energy induced by the rough noise on the solution $u$, so that we can replicate \eqref{ClassicalEnergyEquality} and \eqref{ClassicalEnergyInequality}, thus showing energy estimates that only depends on $(A^1, A^2)$ and the initial condition $|u_0|_0$.

To show existence in Section \ref{section:ProofMainTheoremExistence} we approximate the rough path $\Z$ by smooth paths. Once we show that a classical solution gives rise to a solution in the sense of Definition \ref{def:solution}, the a priori energy estimates show that we get a bounded sequence in the energy space. Using a result on strong compactness we can take the limit in the equation.

Uniqueness is proven in Section \ref{section:ProofMainTheoremUniqueness} in a similar way by showing that the energy satisfies a contraction property with respect to initial data.

\subsection{A priori estimates} \label{section:apriori}

In this section we prove a priori estimates for the H\"{o}lder norms related to the solution and the implicit remainder as introduced in Definition \ref{def:solution}. 

Note that in the equation \eqref{burgers_theorem} the ``drift term'' 
\[
\mu_t := \int_0^t \big[\Delta u_r + \mathrm{div} (F(u_r)) \big] dr
\]
is actually Lipschitz continuous from $[0,T]$ to $H^{-2}$. This is indeed a consequence of \eqref{estim:F1}, \eqref{estim:F2}, together with the fact that $u:[0,T]\to L^2(\R^d)$ is bounded by assumption.
In fact, it will be convenient for us to consider the drift term in an abstract way.

Similarly as in Definition \ref{def:solution} we define a weaker notion of a solution to general equations driven by a drift $\mu$ as above.

\begin{definition}\label{def:solutionWeak}
For two paths $u : [0,T] \rightarrow L^2(\R^d)$ and $\mu : [0,T] \rightarrow H^{-3}$ we write
\begin{equation} \label{AbstractEquation}
\partial_t u_t = \partial_t \mu_t + \beta_j \nabla u_t \dot{Z}_t^j,
\end{equation}
provided that $u^{\natural}$ defined by 
$$
(u^{\natural}_{st}, \phi) := ( \delta u_{st}, \phi) - ( \delta \mu_{st}, \phi) - (u_s, [A_{st}^{1,*} + A_{st}^{1,*}] \phi)
$$
satisfies \eqref{RemainderCondition}.
\end{definition}

For this section we focus on a priori estimates and take the drift $\mu$ to be any Lipschitz path with values in $H^{-2}$.

\begin{lemma} \label{aprioriLemma}
Suppose $\mu : [0,T] \rightarrow H^{-2}$ is Lipschitz, and let $u$ satisfy \eqref{AbstractEquation}.

Then there exists $L > 0$ depending $\alpha$ and $\zeta$ only such that  $|t-s| \leq L$ implies
$$
|( \delta u_{st},\phi) | \leq C  \left( |u|_{L^{\infty}([s,t]; H^0) }  + [ \mu]_{\mathrm{Lip}}   + [ A]_{\alpha}  + [ u^{\natural}]_{\zeta} \right) |t-s|^{\alpha^*} |\phi|_1,
$$
and where we let $\alpha^* := \alpha \wedge ( 1- \alpha) \wedge ( \zeta - 2 \alpha) > 0.$
\end{lemma}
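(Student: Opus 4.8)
The plan is to unwind the definition of $u^{\natural}$ in Definition \ref{def:solutionWeak}, bound the resulting terms, and then trade space regularity for time regularity by splitting the test function at a frequency scale $\eta$ via the smoothing operators \eqref{smoothing_op}: on the ``low-frequency'' part $J^{\eta}\phi$ one uses the equation itself, while on the ``high-frequency'' part $(I-J^{\eta})\phi$ one uses only the energy bound $|\delta u_{st}|_{0}\le 2|u|_{L^{\infty}([s,t];H^{0})}$.

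\textbf{Step 1 (unwinding).} It suffices to prove the estimate for $\phi\in H^{3}$, all terms below being then well defined, and it extends to $\phi\in H^{1}$ by density since $(\delta u_{st},\cdot)$ is bounded on $H^{0}$. For such $\phi$, Definition \ref{def:solutionWeak} gives
\[
(\delta u_{st},\phi)=(\delta\mu_{st},\phi)+(A^{1}_{st}u_{s},\phi)+(A^{2}_{st}u_{s},\phi)+(u^{\natural}_{st},\phi),
\]
using $(u_{s},A^{i,*}_{st}\phi)=(A^{i}_{st}u_{s},\phi)$. Writing $N:=|u|_{L^{\infty}([s,t];H^{0})}$, one records the elementary bounds $|\delta u_{st}|_{0}\le 2N$, $|\delta\mu_{st}|_{-2}\le[\mu]_{\mathrm{Lip}}|t-s|$, $|A^{1}_{st}u_{s}|_{-1}\le[A]_{\alpha}|t-s|^{\alpha}N$, $|A^{2}_{st}u_{s}|_{-2}\le[A]_{\alpha}|t-s|^{2\alpha}N$, and $|(u^{\natural}_{st},\psi)|\le[u^{\natural}]_{\zeta}|t-s|^{\zeta}|\psi|_{3}$ for $\psi\in H^{3}$.

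\textbf{Step 2 (frequency split and optimization).} Fix $\eta\in(0,1]$ and write $\phi=J^{\eta}\phi+(I-J^{\eta})\phi$. For the high part, \eqref{smoothing_op} with $n=0$, $k=1$ gives $|(\delta u_{st},(I-J^{\eta})\phi)|\le|\delta u_{st}|_{0}\,|(I-J^{\eta})\phi|_{0}\lesssim N\eta\,|\phi|_{1}$. For the low part, apply the identity of Step 1 to $J^{\eta}\phi$ and estimate term by term using Step 1 together with $|J^{\eta}\phi|_{1}\lesssim|\phi|_{1}$, $|J^{\eta}\phi|_{2}\lesssim\eta^{-1}|\phi|_{1}$, $|J^{\eta}\phi|_{3}\lesssim\eta^{-2}|\phi|_{1}$ (i.e. \eqref{smoothing_op} with $n=1$, $k=0,1,2$):
\[
|(\delta u_{st},J^{\eta}\phi)|\lesssim\bigl([\mu]_{\mathrm{Lip}}|t-s|\,\eta^{-1}+[A]_{\alpha}N|t-s|^{\alpha}+[A]_{\alpha}N|t-s|^{2\alpha}\eta^{-1}+[u^{\natural}]_{\zeta}|t-s|^{\zeta}\eta^{-2}\bigr)|\phi|_{1}.
\]
Now take $L\in(0,T\wedge1]$ and, for $|t-s|\le L$, choose $\eta:=|t-s|^{\alpha^{*}}\in(0,1]$; using $|t-s|\le1$ to replace each exponent larger than $\alpha^{*}$ by $\alpha^{*}$, every summand becomes $\lesssim|t-s|^{\alpha^{*}}|\phi|_{1}$ times its coefficient. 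The exponent inequalities needed are $1-\alpha^{*}\ge\alpha^{*}$ (since $\alpha^{*}\le\alpha\le\tfrac12$), $\alpha\ge\alpha^{*}$ and $2\alpha-\alpha^{*}\ge\alpha^{*}$ (since $\alpha^{*}\le\alpha$), and $\zeta-2\alpha^{*}\ge\alpha^{*}$, i.e. $3\alpha^{*}\le\zeta$ — the last by distinguishing $\alpha\le\zeta-2\alpha$ (then $\alpha^{*}\le\alpha\le\zeta/3$) from $\zeta-2\alpha<\alpha$ (then $3\alpha^{*}\le3(\zeta-2\alpha)<\zeta$, using $2\zeta<6\alpha$). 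Summing the high and low contributions yields $|(\delta u_{st},\phi)|\lesssim(N+[\mu]_{\mathrm{Lip}}+[A]_{\alpha}N+[u^{\natural}]_{\zeta})|t-s|^{\alpha^{*}}|\phi|_{1}$, which is the assertion (absorbing the structural product $[A]_{\alpha}N$, or splitting it by Young's inequality), with $L$ depending only on $\alpha,\zeta$.

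\textbf{Main obstacle.} The only genuinely delicate point is the tuning of the cutoff $\eta$: the $\mu$- and $A^{2}$-terms penalize small $\eta$ through $\eta^{-1}$ and $u^{\natural}$ through $\eta^{-2}$, while the energy bound on the high-frequency part penalizes large $\eta$ through $\eta^{1}$, and one must check these are simultaneously compatible with the target power $\alpha^{*}$ — exactly the exponent inequalities above. Everything else is routine; in particular the step ``prove it for $\phi\in H^{3}$, then extend'' is harmless precisely because $\delta u_{st}\in H^{0}$.
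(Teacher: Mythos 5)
Your proof is correct and follows essentially the same route as the paper: split $\phi$ with the smoothing operators, use the energy bound on $(I-J^{\eta})\phi$ and the equation on $J^{\eta}\phi$, then choose the cutoff as a power of $|t-s|$. The only (immaterial) differences are that you take $\eta=|t-s|^{\alpha^*}$ instead of the paper's simpler $\eta=|t-s|^{\alpha}$ (which yields the exponents $\alpha,\,1-\alpha,\,\zeta-2\alpha$ directly, avoiding your case analysis for $3\alpha^*\leq\zeta$), and that you spell out the density extension from $H^3$ to $H^1$.
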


\begin{proof}
For any $\phi \in H^1$ we decompose $\phi = J^{\eta} \phi + (I - J^{\eta})\phi$. We get
$$
|(\delta u_{st}, (I-J^{\eta}) \phi) | \lesssim |u|_{L^{\infty}([s,t]; H^0) } | (I-J^{\eta}) \phi |_0  \lesssim |u|_{L^{\infty}([s,t]; H^0) } \eta |\phi|_1 ,
$$
and 
\begin{align*}
|(\delta u_{st}, J^{\eta} \phi) | & \leq [ \mu]_{\mathrm{Lip}} |t-s| |J^{\eta} \phi|_2 + [ A]_{\alpha} |t-s|^{\alpha} |J^{\eta} \phi|_1  
\\
&\quad \quad \quad \quad \quad \quad \quad 
+ [ A]_{\alpha} |t-s|^{2 \alpha} |J^{\eta} \phi|_2 + [ u^{\natural}]_{\zeta} |t-s|^{\zeta} |J^{\eta} \phi|_3
\\
& \lesssim \Big( [ \mu]_{\mathrm{Lip}} |t-s| \eta^{-1} + [ A]_{\alpha} |t-s|^{\alpha}  
\\
&\quad \quad \quad \quad \quad \quad 
+ [ A]_{\alpha} |t-s|^{2 \alpha} \eta^{-1}  + [ u^{\natural}]_{\zeta} |t-s|^{\zeta} \eta^{-2}  \Big)| \phi|_1
\end{align*}
Now, choose $\eta = |t-s|^{\alpha}$ we get
\begin{align*}
|(\delta u_{st}, \phi) | &  \lesssim  \Big( |u|_{L^{\infty}([s,t]; H^0) } |t-s|^{\alpha} + [ \mu]_{\mathrm{Lip}} |t-s|^{1 - \alpha}  
\\
&\quad \quad \quad \quad \quad \quad \quad \quad \quad \quad 
+ [ A]_{\alpha}  + [ u^{\natural}]_{\zeta} |t-s|^{\zeta - 2 \alpha} \Big) |\phi|_1
\\
& \lesssim   \left( |u|_{L^{\infty}([s,t]; H^0) }  + [ \mu]_{\mathrm{Lip}}   + [ A]_{\alpha}  + [ u^{\natural}]_{\zeta} \right) |t-s|^{\alpha^*} |\phi|_1,
\end{align*}
which is the claimed estimate.
\end{proof}

The following lemma is a slightly less general result than the one proved in \cite[Theorem 2.5]{DGHT}. We include a proof since we are working with H\"{o}lder norms instead of $p$-variation.

\begin{theorem} \label{aprioriEstimate}
Consider $u,\mu$ and $\zeta $ as in Lemma \ref{aprioriLemma}.

There exist $L > 0$ and $\zeta^* > 1$ depending only on $\alpha$ and $\zeta,$
such that $|t-s| \leq L$ implies
$$
|(u_{st}^{\natural},\phi) | \leq C  ( [\mu]_{\mathrm{Lip}} [A]_{\alpha}  + ([A]_{\alpha} + [A]^2_{\alpha}) |u|_{L^{\infty}([s,t]; H^0) } )|t-s|^{\zeta^*} |\phi|_3 .
$$
\end{theorem}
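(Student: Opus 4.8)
The strategy is the standard "sewing-type" argument for unbounded rough drivers: derive an estimate for the second-order increment $\delta u^\natural_{s\theta t}$, check it fits the hypotheses of a sewing lemma (the one referenced as Lemma \ref{sewingLemma}), and conclude. First I would compute $\delta u^\natural_{s\theta t}$ from the defining relation in Definition \ref{def:solutionWeak}. Since $\delta u$ and $\delta \mu$ are honest increments, they are additive and drop out, so the only contribution comes from the operator terms: using the operator Chen relation $\delta A^2_{s\theta t} = A^1_{\theta t}A^1_{s\theta}$ and $\delta A^1_{s\theta t}=0$, together with $\delta u_{s\theta}=$ (drift) $+A^1_{s\theta}u_s+A^2_{s\theta}u_s+u^\natural_{s\theta}$, one finds (after testing against $\phi\in H^3$ and regrouping) that
$$
\delta u^\natural_{s\theta t} = -A^1_{\theta t}\big(\delta\mu_{s\theta} + A^2_{s\theta}u_s + u^\natural_{s\theta}\big) - A^2_{\theta t}\big(\delta\mu_{s\theta} + A^1_{s\theta}u_s\big),
$$
or something of this shape modulo which cross-terms survive; the point is that every surviving term is a composition of the bounded operators $A^i$ (with norms controlled by $[A]_\alpha |t-s|^{i\alpha}$ between the appropriate scales) applied either to the Lipschitz drift increment, to $u_s\in H^0$, or to the a priori remainder $u^\natural_{s\theta}$ which by Definition \ref{def:solution}(ii) satisfies $|(u^\natural_{s\theta},\phi)|\lesssim [u^\natural]_\zeta|t-s|^\zeta|\phi|_3$.

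Next I would estimate $|(\delta u^\natural_{s\theta t},\phi)|$ for $\phi\in H^3$ by pushing derivatives through the adjoint operators $A^{i,*}$, exactly as in the proof of Lemma \ref{aprioriLemma}: each application of $A^{1,*}$ costs one derivative and gains $|t-s|^\alpha$, each $A^{2,*}$ costs two derivatives and gains $|t-s|^{2\alpha}$, while $[\mu]_{\mathrm{Lip}}$ supplies a factor $|t-s|$. Tracking the worst case one gets a bound of the form $|(\delta u^\natural_{s\theta t},\phi)|\lesssim \big([\mu]_{\mathrm{Lip}}[A]_\alpha + ([A]_\alpha+[A]_\alpha^2)|u|_{L^\infty([s,t];H^0)} + [A]_\alpha [u^\natural]_\zeta|t-s|^{\zeta-2\alpha}\big)|t-s|^{1+\alpha}|\phi|_3$, where I have used $\zeta-2\alpha>0$ and restricted to $|t-s|\le L$ to absorb the $[u^\natural]_\zeta$ term and any sub-leading powers; since $1+\alpha>1$ the exponent $\zeta^*:=(1+\alpha)\wedge\zeta\wedge\ldots$ is strictly larger than $1$. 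This is precisely the hypothesis of the sewing lemma for a germ that is already known (a priori) to be a genuine $C_2^{1+}$ remainder.

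Then I would invoke Lemma \ref{sewingLemma}: a two-parameter function with $\delta$ of it controlled by $|t-s|^{\zeta^*}$, $\zeta^*>1$, and which is itself small (here $u^\natural$ is $|t-s|^\zeta$-small with $\zeta>1$), is forced to obey the same quantitative bound as its $\delta$ — the constant in front being universal. This upgrades the qualitative a priori knowledge $[u^\natural]_\zeta<\infty$ into the quantitative estimate of the statement, with the $[u^\natural]_\zeta$-dependence self-improved away. The restriction $|t-s|\le L$ is what makes the bootstrap close: $L$ is chosen (depending only on $\alpha,\zeta$) so that $[A]_\alpha L^{\zeta-2\alpha}\le \tfrac12$ or similar, absorbing the remainder-dependent term into the left-hand side.

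The main obstacle is bookkeeping: correctly identifying which terms appear in $\delta u^\natural_{s\theta t}$ (the interplay of the two adjoints $A^{1,*},A^{2,*}$ with the Chen relation), and checking at each step that the scale indices line up — one must only ever apply $A^1$ on $H^{n+1}\to H^n$ and $A^2$ on $H^{n+2}\to H^n$ for $n\in\{0,1,2,3\}$, and the drift lives in $H^{-2}$ so testing against $J^\eta\phi$ (if an intermediate mollification is needed, as in Lemma \ref{aprioriLemma}) must respect $|J^\eta\phi|_{3+k}\lesssim\eta^{-k}|\phi|_3$. Once the combinatorics of the increment are pinned down, the estimates are entirely parallel to Lemma \ref{aprioriLemma} and the sewing lemma does the rest.
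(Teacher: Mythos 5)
Your overall architecture --- compute $\delta u^{\natural}_{s\theta t}$, bound it by $|t-s|^{\zeta^*}$ with $\zeta^*>1$, invoke the sewing machinery (Corollary \ref{SewingCorollary}) to transfer the bound from $\delta u^{\natural}$ to $u^{\natural}$ itself, and absorb the $[u^{\natural}]$-dependent term --- is exactly the paper's. But there is a genuine gap in the central estimation step. You keep only one representation of $\delta u^{\natural}_{s\theta t}$, the one obtained by inserting the equation for $\delta u_{s\theta}$, and claim a direct bound of the form $(\cdots)|t-s|^{1+\alpha}|\phi|_3$. This cannot work: that representation contains terms such as $A^2_{\theta t}\delta\mu_{s\theta}$, $A^2_{\theta t}A^2_{s\theta}u_s$, $A^1_{\theta t}u^{\natural}_{s\theta}$ and $A^2_{\theta t}u^{\natural}_{s\theta}$, which, tested against $\phi$, require $|\phi|_4$ or even $|\phi|_5$ (e.g.\ $u^{\natural}_{s\theta}\in H^{-3}$, so $A^2_{\theta t}u^{\natural}_{s\theta}\in H^{-5}$). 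No amount of tracking "which adjoint costs which derivative" produces a bound in $|\phi|_3$ from this expression alone. The essential idea you are missing --- and it is the whole point of the proof, not optional bookkeeping --- is that one must retain \emph{both} representations of $\delta u^{\natural}_{s\theta t}$: the Chen-relation form $A^2_{\theta t}\delta u_{s\theta}+A^1_{\theta t}(\delta u_{s\theta}-A^1_{s\theta}u_s)$, which costs only two derivatives of $\phi$ but has low time regularity ($|t-s|^{2\alpha}$), and the equation-inserted form, which has high time regularity but a high derivative count. One then splits $\phi=J^{\eta}\phi+(I-J^{\eta})\phi$, applies the equation-inserted form to $J^{\eta}\phi$ (paying $\eta^{-1}$ or $\eta^{-2}$) and the Chen-relation form to $(I-J^{\eta})\phi$ (gaining a factor $\eta$ from $|(I-J^{\eta})\phi|_2\lesssim\eta|\phi|_3$), and optimizes $\eta=\lambda|t-s|^{\kappa}$ with $\kappa$ near $\alpha$.

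Two smaller corrections. First, after this interpolation the exponent is not $1+\alpha$ but $\zeta^*=\zeta\wedge(1+\alpha-\kappa)\wedge(4\alpha-\kappa)\wedge(\zeta+2\alpha-2\kappa)\wedge(2\alpha+2\kappa)$, which for $\alpha$ near $1/3$ is only barely above $1$; your claimed exponent is too strong. Second, the absorption of the $[u^{\natural}]$-term is most cleanly done by taking the prefactor $\lambda$ in $\eta=\lambda|t-s|^{\kappa}$ large (so that $C\lambda^{-2}[A]_{\alpha}\le\tfrac12$), with $L$ then chosen so that $\eta\le1$; your variant via $[A]_{\alpha}L^{\zeta-2\alpha}\le\tfrac12$ could be made to work but would make $L$ depend on $[A]_{\alpha}$, whereas the statement wants $L$ depending only on $\alpha$ and $\zeta$. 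Your identification of $\delta u^{\natural}_{s\theta t}$ also has sign errors and drops the $A^2_{\theta t}A^2_{s\theta}u_s$ and $A^2_{\theta t}u^{\natural}_{s\theta}$ terms, though you hedge on this and the signs are immaterial for the estimates.
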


\begin{proof}
Applying the second order increment operator to $u^{\natural}$ we get for each $0\leq s\leq \theta \leq t\leq T:$
\begin{align}
\delta u^{\natural}_{s \theta t} & = A_{\theta t}^2 \delta u_{s \theta}  + A^1_{\theta t}( \delta u_{s \theta} - A_{s \theta}^1 u_s)
\label{SmoothInSpace} 
\\
& = (A_{\theta t}^2 + A_{\theta t}^1)\delta \mu_{s \theta}  + (A_{\theta t}^2 A_{s \theta}^1 + A_{\theta t}^2 A_{s \theta}^2 + A_{\theta t}^1 A_{s \theta}^2) u_s   + (A_{\theta t}^1 + A_{\theta t}^2) u_{s \theta}^{\natural} 
\label{SmoothInTime}
\end{align}
where \eqref{SmoothInSpace} comes from Chen's relation and \eqref{SmoothInTime} comes from inserting the equation.

For any $\phi \in H^3$ we decompose $\phi = J^{\eta} \phi + (I - J^{\eta})\phi$. For the smooth part $J^{\eta} \phi ,$ we use \eqref{SmoothInTime} and the properties \eqref{smoothing_op} of the smoothing operator to get 
\begin{align}
 \nonumber
|(\delta u^{\natural}_{s \theta t}, J^{\eta}\phi) | & \leq |t-s| [\mu]_{\mathrm{Lip}} \left|(A_{\theta t}^2 + A_{\theta t}^1)^* J^{\eta} \phi\right|_2 
\\
 \nonumber
 &\quad \quad \quad
+ |u_s|_0 \left|(A_{\theta t}^2 A_{s \theta}^1+ A_{\theta t}^2 A_{s \theta}^2 + A_{\theta t}^1 A_{s \theta}^2)^* J^{\eta} \phi \right|_0
\\
 \nonumber
&\quad \quad \quad \quad \quad \quad \quad 
+ [ u^{\natural}]_{\zeta} |t-s|^{\zeta} \left| (A_{\theta t}^1 + A_{\theta t}^2)^* J^{\eta} \phi \right|_3 
 \\
  \nonumber
 & \lesssim |t-s|^{1 + \alpha}  [\mu]_{\mathrm{Lip}} [A]_{\alpha} | J^{\eta} \phi|_4 +  |u|_{L^{\infty}([s,t]; H^0)} |t-s|^{4 \alpha} [A]_{\alpha}^2 |J^{\eta} \phi |_4
 \\
 \nonumber
 &\quad \quad \quad \quad 
 +  [ u^{\natural}]_{\zeta} |t-s|^{\zeta + 2 \alpha} [A]_{\alpha} | J^{\eta} \phi |_5 
 \\
 \label{SmoothPart}
  & \lesssim \Big( |t-s|^{1 + \alpha}  [\mu]_{\mathrm{Lip}} [A]_{\alpha} \eta^{-1}  +  |u|_{L^{\infty}([s,t]; H^0)} |t-s|^{4 \alpha} [A]_{\alpha}^2  \eta^{-1}
  \\
   \nonumber
  &\quad \quad \quad \quad \quad \quad \quad \quad \quad \quad \quad \quad 
  +  [ u^{\natural}]_{\zeta} |t-s|^{\zeta + 2 \alpha} [A]_{\alpha} \eta^{-2} \Big) |\phi|_3 . 
\end{align}
For the non-smooth part we use \eqref{SmoothInSpace}, yielding
\begin{align}
\nonumber
|(\delta u^{\natural}_{s \theta t}, (I - J^{\eta})\phi) | & \lesssim |u|_{L^{\infty}([s,t]; H^0)} [A]_{\alpha} |t-s|^{2 \alpha} |(I - J^{\eta})\phi|_2
\\
\label{nonSmoothPart}
&\lesssim |u|_{L^{\infty}([s,t]; H^0)} [A]_{\alpha} |t-s|^{2 \alpha} \eta |\phi|_3 . 
\end{align}
Next, let $\eta = \lambda |t-s|^{\kappa}$ with some parameters $\lambda,\kappa>0$ (to be determined later).
From \eqref{SmoothPart} and \eqref{nonSmoothPart}, we get
\begin{align*}
|\delta u^{\natural}_{s \theta t}|_{-3} 
&  \lesssim |t-s|^{1 + \alpha - \kappa} \lambda^{-1}  [\mu]_{\mathrm{Lip}} [A]_{\alpha}
+ |u|_{L^{\infty}([s,t]; H^0)}|t-s|^{4 \alpha - \kappa} \lambda^{-1} [A]_{\alpha}^2 
\\
&\quad \quad \quad \quad 
+  [ u^{\natural}]_{\zeta} |t-s|^{\zeta + 2 \alpha - 2 \kappa}  \lambda^{-2} [A]_{\alpha}
+ |u|_{L^{\infty}([s,t]; H^0)} [A]_{\alpha} |t-s|^{2 \alpha + 2 \kappa} \lambda^2 .
\end{align*}
Choose now $\kappa$ close to $\alpha$ such that
$$
\zeta^* := \zeta \wedge (1 + \alpha - \kappa) \wedge (4 \alpha - \kappa) \wedge (\zeta + 2 \alpha - 2 \kappa) \wedge (2 \alpha + 2 \kappa) > 1.
$$
Then, it holds:
\begin{multline*}
|\delta u^{\natural}_{s \theta t}|_{-3}
\leq C \Big( \lambda^{-1}  [\mu]_{\mathrm{Lip}} [A]_{\alpha} +|u|_{L^{\infty}([s,t]; H^0)}\big(\lambda^{-1} [A]_{\alpha}^2  +  [A]_{\alpha} \lambda^2\big)
\\
+  [ u^{\natural}]_{\zeta^*}  \lambda^{-2} [A]_{\alpha}    \Big) |t-s|^{\zeta^*},
\end{multline*}
where we have used $[ u^{\natural}]_{\zeta} \lesssim [ u^{\natural}]_{\zeta^*}$. 
From Corollary \ref{SewingCorollary} we get
\begin{align*}
[ u^{\natural}]_{\zeta^*} &  \leq C ( \lambda^{-1}  [\mu]_{\mathrm{Lip}} [A]_{\alpha} +  |u|_{L^{\infty}([s,t]; H^0)}( [A]_{\alpha} \lambda^2  + \lambda^{-1} [A]_{\alpha}^2 ) +  [ u^{\natural}]_{\zeta^*}  \lambda^{-2} [A]_{\alpha}   ) .
\end{align*}
Choosing $\lambda$ large enough, namely such that $\lambda^{-2} C [A]_{\alpha} \leq \frac{1}{2},$ and letting $L$ such that $\lambda |t-s|^{\kappa} \in [0,1],$ we obtain the claimed estimate.
\end{proof}

For technical reasons we shall need to estimate the term
\begin{equation} \label{FlatSmoothInSpace}
u^{\flat}_{st} := \delta u_{st} - A^1_{st} u_s .
\end{equation}
The above corresponds to the remainder term in the theory of controlled rough paths, and we see that $u^{\flat} \in C^{\alpha^*}_2([0,T]; H^{-1})$. Moreover, we can write

\begin{equation} \label{FlatSmoothInTime}
u^{\flat}_{st} = \delta \mu_{st} + A^2_{st} u_s + u_{st}^{\natural} ,
\end{equation}
so that also $u^{\flat} \in C^{2 \alpha}_2([0,T]; H^{-3})$. In fact, we can interpolate between these two spaces as follows.

\begin{lemma} \label{aprioriLemmaFlat}
Suppose $\mu : [0,T] \rightarrow H^{-2}$ is Lipschitz, and $u$ solves \eqref{AbstractEquation}.

Then there exists $L > 0$ such that $|t-s| \leq L$ implies
$$
|( u_{st}^{\flat},\phi) | \leq C  \left( |u|_{L^{\infty}([s,t]; H^0) }  + [ \mu]_{\mathrm{Lip}}   + [ A]_{\alpha}  + [ u^{\natural}]_{\zeta} \right) |t-s|^{\zeta - \alpha} |\phi|_2 . 
$$
\end{lemma}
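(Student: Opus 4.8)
The plan is to prove the interpolation bound for $u^\flat$ by splitting an arbitrary test function $\phi\in H^2$ into a smooth part $J^\eta\phi$ and a rough part $(I-J^\eta)\phi$, estimating each against one of the two representations of $u^\flat$, and then optimizing over the smoothing parameter $\eta$. This is exactly the same mechanism already used in the proofs of Lemma~\ref{aprioriLemma} and Theorem~\ref{aprioriEstimate}: for the rough part we use the representation \eqref{FlatSmoothInSpace}, which lives in $H^{-1}$ with time-regularity $\alpha^*$, and for the smooth part we use \eqref{FlatSmoothInTime}, which lives in $H^{-3}$ with time-regularity $2\alpha$ (indeed $\zeta>2\alpha$, so the genuinely limiting exponent among the three terms in \eqref{FlatSmoothInTime} is the $A^2$ term, giving $|t-s|^{2\alpha}$).

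Concretely, first I would write, using \eqref{FlatSmoothInSpace} together with the $H^{-1}$-bound already established for $u^\flat$ (which follows from Lemma~\ref{aprioriLemma}: $\delta u_{st}$ is controlled in $H^{-1}$ with exponent $\alpha^*$, and $A^1_{st}u_s$ is controlled in $H^{-1}$ by $[A]_\alpha|t-s|^\alpha|u_s|_0$),
$$
|(u^\flat_{st},(I-J^\eta)\phi)| \lesssim \big(|u|_{L^\infty([s,t];H^0)} + [\mu]_{\mathrm{Lip}} + [A]_\alpha + [u^\natural]_\zeta\big)\,|t-s|^{\alpha^*}\,|(I-J^\eta)\phi|_1 \lesssim (\cdots)\,|t-s|^{\alpha^*}\,\eta\,|\phi|_2,
$$
using $|(I-J^\eta)\phi|_1\lesssim \eta|\phi|_2$ from \eqref{smoothing_op}. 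Then, using \eqref{FlatSmoothInTime}, the dual operator bounds $\|A^2_{st}\|_{\mathcal L(H^{3};H^{1})}\lesssim[A]_\alpha|t-s|^{2\alpha}$, the Lipschitz bound on $\mu$ into $H^{-2}$, and \eqref{RemainderCondition},
$$
|(u^\flat_{st},J^\eta\phi)| \lesssim \big([\mu]_{\mathrm{Lip}}|t-s|\,|J^\eta\phi|_2 + [A]_\alpha|t-s|^{2\alpha}\,|J^\eta\phi|_1 + [u^\natural]_\zeta|t-s|^\zeta\,|J^\eta\phi|_3\big) \lesssim (\cdots)\big(|t-s| + [A]_\alpha|t-s|^{2\alpha}\eta^{-1} + [u^\natural]_\zeta|t-s|^\zeta\eta^{-1}\big)|\phi|_2,
$$
where I have absorbed constants and used $|J^\eta\phi|_{2+k}\lesssim\eta^{-k}|\phi|_2$. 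Adding the two pieces and choosing $\eta = |t-s|^{\alpha}$ turns the rough part into $|t-s|^{\alpha^*+\alpha}$ and the dominant smooth-part term $[A]_\alpha|t-s|^{2\alpha}\eta^{-1}$ into $[A]_\alpha|t-s|^{\alpha}$; one checks $\alpha^*+\alpha \geq \zeta-\alpha$ (since $\alpha^*=\alpha\wedge(1-\alpha)\wedge(\zeta-2\alpha)$ and $\zeta>1>2\alpha$ forces $\zeta-2\alpha<\alpha$ when $\alpha$ is small, etc.) and that all exponents $1$, $\alpha$, $\zeta-\alpha$, $\alpha^*+\alpha$ dominate $\zeta-\alpha$, so that after restricting to $|t-s|\leq L$ every term is bounded by a multiple of $|t-s|^{\zeta-\alpha}|\phi|_2$, which is the claim.

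The one point requiring a little care — and the place I expect to be the main obstacle — is the bookkeeping of exponents: one must verify that with the choice $\eta=|t-s|^\alpha$ (or possibly a slightly different power $\eta=|t-s|^\kappa$ with $\kappa$ tuned near $\alpha$) the worst exponent appearing is exactly $\zeta-\alpha$ and not something smaller, and in particular that $\zeta-\alpha\leq 1$ and $\zeta-\alpha\leq\alpha^*+\alpha$ so that the $|t-s|\leq L\leq 1$ restriction legitimately lets us bound all higher powers by the power $\zeta-\alpha$. Since $\zeta>1$ and $\alpha\leq 1/2$, one has $\zeta-\alpha$ could in principle exceed $1$; but then the exponent $1$ from the $\mu$-term is the binding one and the statement should really be read with $(\zeta-\alpha)\wedge 1$ — I would either note this or, more cleanly, observe that $\zeta$ can always be taken arbitrarily close to $1$ (it is only an upper bound being asserted), so $\zeta-\alpha<1$ may be assumed without loss of generality, and then everything goes through with the single exponent $\zeta-\alpha$. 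No new analytic input beyond \eqref{smoothing_op}, the operator norm bounds on $A^1,A^2$, \eqref{RemainderCondition}, and Lemma~\ref{aprioriLemma} is needed.
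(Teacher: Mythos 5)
Your overall strategy is exactly the paper's: split $\phi=J^{\eta}\phi+(I-J^{\eta})\phi$, use \eqref{FlatSmoothInSpace} on the rough part and \eqref{FlatSmoothInTime} on the smooth part, and take $\eta=|t-s|^{\alpha}$. (Your rough-part variant --- bounding $\delta u_{st}$ in $H^{-1}$ via Lemma \ref{aprioriLemma} rather than in $H^{0}$ by $2|u|_{L^{\infty}([s,t];H^0)}$ as the paper does --- is harmless: it gives exponent $\alpha^*+\alpha$ instead of $2\alpha$, and both reduce to $\zeta-\alpha$ under the same normalization of $\zeta$ discussed below.)

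There is, however, a concrete slip in your smooth-part estimate that, taken at face value, breaks the claimed rate. The term $A^2_{st}u_s$ lives in $H^{-2}$ (since $A^2$ loses two derivatives and $u_s\in H^0$), so it pairs against $J^{\eta}\phi$ at the $H^{-2}$/$H^{2}$ level and costs only $|J^{\eta}\phi|_2\lesssim|\phi|_2$ --- \emph{no} factor $\eta^{-1}$ is needed, and this is what the paper uses. You first write $|J^{\eta}\phi|_1$ (which is even cheaper) but then absorb an $\eta^{-1}$, arriving at $[A]_{\alpha}|t-s|^{2\alpha}\eta^{-1}=[A]_{\alpha}|t-s|^{\alpha}$, and you assert that the exponent $\alpha$ dominates $\zeta-\alpha$. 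It does not: since $\zeta>1\geq 2\alpha$ one always has $\alpha<\zeta-\alpha$, so $|t-s|^{\alpha}$ is \emph{larger} than $|t-s|^{\zeta-\alpha}$ on $|t-s|\leq L\leq 1$ and cannot be bounded by it. The fix is simply to delete the spurious $\eta^{-1}$, which restores the exponent $2\alpha$; this (like the exponent $1$ from the $\mu$-term and your $\alpha^*+\alpha$ from the rough part) dominates $\zeta-\alpha$ exactly under the reduction $\zeta\leq(3\alpha)\wedge(1+\alpha)$, which is legitimate since $3\alpha>1$ and $1+\alpha>1$ and since decreasing $\zeta$ toward $1$ only weakens the hypothesis \eqref{RemainderCondition}; the paper's own proof implicitly relies on the same normalization. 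With that correction your argument coincides with the paper's.
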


\begin{proof}
For any $\phi \in H^2$ we decompose $\phi = J^{\eta} \phi + (I - J^{\eta})\phi$. 
We get from \eqref{FlatSmoothInSpace}
\begin{align*}
|( u_{st}^{\flat}, (I-J^{\eta}) \phi) | &  \lesssim |u|_{L^{\infty}([s,t]; H^0) } | (I-J^{\eta}) \phi |_0
\\
&\quad \quad \quad 
+ |u|_{L^{\infty}([s,t]; H^0) } [A]_{\alpha}|t-s|^{\alpha} |(I - J^{\eta})\phi|_1  \\
 & \lesssim |u|_{L^{\infty}([s,t]; H^0) }( \eta^2 +  [A]_{\alpha} \eta |t-s|^{\alpha}) |\phi|_2 ,
\end{align*}
and 
\begin{align*}
|( u_{st}^{\flat}, J^{\eta} \phi) | & \leq [ \mu]_{\mathrm{Lip}} |t-s| |J^{\eta} \phi|_2   + [ A]_{\alpha} |t-s|^{2 \alpha} |J^{\eta} \phi|_2 + [ u^{\natural}]_{\zeta} |t-s|^{\zeta} |J^{\eta} \phi|_3 \\
& \lesssim \left( [ \mu]_{\mathrm{Lip}} |t-s|  + [ A]_{\alpha} |t-s|^{2 \alpha}   + [ u^{\natural}]_{\zeta} |t-s|^{\zeta} \eta^{-1}  \right)| \phi|_2 . 
\end{align*}
Now, choose $\eta = |t-s|^{\alpha}$ we get
\begin{align*}
|(u^{\flat}_{st}, \phi) | &  \lesssim  \left((1+ |u|_{L^{\infty}([s,t]; H^0) })(1 + [A]_{\alpha})  + [ \mu]_{\mathrm{Lip}}   + [ u^{\natural}]_{\zeta} \right) |t-s|^{\zeta -   \alpha}  |\phi|_2 \\
\end{align*}
provided $L$ is such that $\eta \in (0,1).$
\end{proof}

We close this section with an It\^{o} formula for tensor products on unbounded rough drivers.

\begin{proposition} \label{ItoTensorProduct}
Assume we have
\begin{equation} \label{uEquation}
\delta u_{st}  = \delta \mu_{st} + A_{st}^1 u_s + A_{st}^2 u_s + u_{st}^{\natural}
\end{equation}
and
\begin{equation} \label{vEquation}
\delta v_{st}  = \delta \nu_{st} + B_{st}^1 v_s + B_{st}^2 v_s + v_{st}^{\natural},
\end{equation}
in the sense of Definition \ref{def:solutionWeak}.

Then the tensor $u \otimes v(x,y) = u(x) v(y)$ satisfies
\begin{equation} \label{uTensorvEquation}
\delta (u\otimes v)_{st}  = \int_s^t \big(\dot{\mu}_r  \otimes v_r + u_r \otimes \dot{\nu}_r\big) dr  + \Gamma_{st}^1 (u \otimes v)_s + \Gamma_{st}^2 (u \otimes v)_s + (u\otimes v)_{st}^{\natural}
\end{equation}
on the scale $H^n(\R^d \times \R^d)$. 
Above we have defined the unbounded rough driver
$$
\Gamma_{st}^1 = A_{st}^1 \otimes I + I \otimes B_{st}^1 \hspace{1cm} \Gamma_{st}^2 = A_{st}^2 \otimes I + I \otimes B_{st}^2 + A_{st}^1 \otimes B_{st}^1,
$$
and by $\dot{\mu}_r$ (respectively $\dot{\nu}_r$) we denote the $r$-almost everywhere defined derivatives of $\mu$ (respectively $\nu$), defined with values in $H^{-2}$.
\end{proposition}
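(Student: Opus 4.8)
The plan is to derive the tensor equation \eqref{uTensorvEquation} directly from the two scalar equations \eqref{uEquation} and \eqref{vEquation} by exploiting the algebraic identity for increments of a product, and then to verify that the resulting remainder satisfies the required $C_2^{1+}$ bound. First I would observe that for any two paths one has the pointwise identity
\[
\delta (u\otimes v)_{st} = (\delta u_{st})\otimes v_s + u_s \otimes (\delta v_{st}) + (\delta u_{st})\otimes(\delta v_{st}),
\]
which holds as an identity in, say, $H^{-3}(\R^d\times\R^d)$ once we know $u,v$ are bounded into $H^0$. Substituting \eqref{uEquation} and \eqref{vEquation} into the first two terms produces, on the one hand, the drift contribution $\delta\mu_{st}\otimes v_s + u_s\otimes\delta\nu_{st}$, which by Lipschitz continuity of $\mu,\nu$ into $H^{-2}$ equals $\int_s^t(\dot\mu_r\otimes v_r + u_r\otimes\dot\nu_r)\,dr$ up to a term of size $|t-s|\cdot[\delta(u\otimes v)]$ that is absorbed into the remainder; and on the other hand the terms $(A_{st}^1 u_s + A_{st}^2 u_s)\otimes v_s$ and $u_s\otimes(B_{st}^1 v_s + B_{st}^2 v_s)$, which I would reorganize by degree: $A_{st}^1\otimes I + I\otimes B_{st}^1$ acting on $(u\otimes v)_s$ gives exactly $\Gamma^1_{st}(u\otimes v)_s$, and $A_{st}^2\otimes I + I\otimes B_{st}^2$ acting on $(u\otimes v)_s$ gives part of $\Gamma^2_{st}(u\otimes v)_s$. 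The remaining piece $A_{st}^1\otimes B_{st}^1$ of $\Gamma^2$ comes from the cross term $(A^1_{st}u_s)\otimes(B^1_{st}v_s)$ hidden inside $(\delta u_{st})\otimes(\delta v_{st})$.

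The second and more delicate step is to identify what is left over, namely
\[
(u\otimes v)^\natural_{st} := \delta(u\otimes v)_{st} - \int_s^t(\dot\mu_r\otimes v_r + u_r\otimes\dot\nu_r)\,dr - \Gamma^1_{st}(u\otimes v)_s - \Gamma^2_{st}(u\otimes v)_s,
\]
and to show it lies in $C_2^{1+}([0,T];H^{-3}(\R^d\times\R^d))$. Collecting the pieces, $(u\otimes v)^\natural_{st}$ is a sum of: the two genuine remainders $u^\natural_{st}\otimes v_s$ and $u_s\otimes v^\natural_{st}$ (each controlled by \eqref{RemainderCondition} and boundedness of the other factor); terms like $(\delta\mu_{st})\otimes(\delta v_{st})$, $(A^1_{st}u_s)\otimes(\delta\nu_{st})$, etc., i.e.\ products of two increments at least one of which carries a $\mu$- or $\nu$-Lipschitz factor or an $A^1/B^1$ factor of order $\alpha$ — here I would use the controlled-path structure, writing $\delta v_{st} = B^1_{st}v_s + v^\flat_{st}$ with $v^\flat$ estimated by Lemma \ref{aprioriLemmaFlat}, so that each such product is of order strictly greater than $1$ in $|t-s|$; and finally the cross terms $(A^2_{st}u_s)\otimes(\delta v_{st})$, $(\delta u_{st})\otimes(B^2_{st}v_s)$ and products of the flat/natural parts, which are all of order at least $2\alpha + \alpha^* > 1$ after the same substitution. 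Throughout, the boundedness of $u,v$ in the energy space and the operator bounds $\|A^i_{st}\|, \|B^i_{st}\|\lesssim|t-s|^{i\alpha}$ on the relevant scales $H^{n+i}\to H^n$ (which tensorize to the scales on $\R^d\times\R^d$) are what make the pairing against a test function $\phi\in H^3(\R^d\times\R^d)$ finite.

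The main obstacle I anticipate is bookkeeping rather than conceptual: one must be careful that the tensorized operators $A^i\otimes I$, $I\otimes B^i$, $A^1\otimes B^1$ are bounded on the right scales $H^{n+i}(\R^d\times\R^d)\to H^n(\R^d\times\R^d)$ — this requires that each $A^i$ (resp.\ $B^i$) be bounded $H^{n+i}(\R^d)\to H^n(\R^d)$ \emph{uniformly in the extra} $\R^d$ \emph{variable}, which holds because the $\beta_j$ depend only on the first (resp.\ second) variable and the smoothing/interpolation used in the a priori estimates goes through tensor-wise — and that the interpolation exponents in $|t-s|$ still add up to something $>1$ when distributed across a product of two increments. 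Concretely, the worst term is the one requiring the \emph{flat} estimate of Lemma \ref{aprioriLemmaFlat}: pairing $(\delta u_{st})\otimes(\delta v_{st})$ needs $\delta u\in C^{\alpha^*}_2(H^{-1})$ against $\delta v\in C^{\alpha^*}_2(H^{-1})$ tensorized to $H^{-2}(\R^d\times\R^d)$, but to reach $H^{-3}$ with a good exponent one splits off the $A^1\otimes B^1$ resonant part exactly and bounds the rest using that $u^\flat\in C^{\zeta-\alpha}_2(H^{-3})$ with $\zeta-\alpha>1$ by Lemma \ref{aprioriLemmaFlat}, combined with $\delta v\in C^{\alpha^*}_2(H^{-1})$. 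Once it is checked that every surviving term carries a total time exponent $\zeta^\dagger>1$ for a suitable $\zeta^\dagger$ (some minimum of $\zeta$, $\zeta-\alpha+\alpha^*$, $1+\alpha^*$, $2\alpha+\alpha^*$), the identification of $(u\otimes v)^\natural$ and its bound \eqref{RemainderCondition} follow, completing the proof.
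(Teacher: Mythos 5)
Your proposal follows essentially the same route as the paper's proof: the increment identity for $\delta(u\otimes v)$, substitution of the controlled decompositions $\delta u_{st}=A^1_{st}u_s+u^\flat_{st}$ and $\delta v_{st}=B^1_{st}v_s+v^\flat_{st}$ into the cross term, and term-by-term estimation of the leftover remainder with the exponents $\zeta$, $1+\alpha^*$ and $\zeta-\alpha+\alpha^*$. Two small corrections: Lemma \ref{aprioriLemmaFlat} places $u^\flat$ in $C_2^{\zeta-\alpha}([0,T];H^{-2})$ (not $H^{-3}$), and $\zeta-\alpha>1$ need not hold --- what saves the worst term $u^\flat_{st}\otimes v^\flat_{st}$ is precisely the extra $\alpha^*$ from the second factor, exactly as in your final list of exponents.
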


\begin{proof}

Elementary algebraic manipulations give
\begin{align}
\delta (u \otimes v)_{st} & =  u_{s} \otimes \delta v_{st}+ \delta u_{st} \otimes v_{s} +  \delta u_{st}\otimes \delta v_{st} \notag \\
 & =  u_{s} \otimes \delta v_{st}+ \delta u_{st} \otimes v_{s} +  ( u_{st}^{\flat} + A_{st}^1u_s) \otimes ( v^{\flat}_{st} + B_{st}^1) \notag \\
 & =  \int_s^t \big[\dot{\mu}_r  \otimes v_r + u_r \otimes \dot{\nu}_r\big] dr  + \Gamma_{st}^1 (u \otimes v)_s + \Gamma_{st}^2 (u \otimes v)_s + (u\otimes v)_{st}^{\natural} \label{eq:tensor difference},
\end{align}

where we have defined  
\begin{align} \label{eq:tensorEqn2}
(u \otimes v)^{\natural}_{st} & :=  -  \int_s^t \big[ \dot{\mu}_{r} \otimes \delta v_{sr} +\delta u_{sr} \otimes \dot{\nu}_r\big] dr +  u_{st}^{\natural} \otimes v_s + u_s \otimes v^{\natural}_{st}   +  u_{st}^{\flat} \otimes v_{st}^{\flat} \notag \\
 & +  u_{st}^{\flat} \otimes B_{st}^1 v_s +  A_{st}^1 u_s \otimes v_{st}^{\flat} . 
 \end{align}
The proof is done once we can show that  
$$
|((u \otimes v)^{\natural}_{st},\Phi)| \le  C |\Phi|_{H^3(\R^d \times \R^d)} |t-s|^{\zeta^*} 
$$
for some some $\zeta^* > 1$. 

To do so we fix $\Phi \in H^3(\R^d \times \R^d)$, and examine the expression \eqref{eq:tensorEqn2} directly. Since many of the arguments are symmetric, we will only consider some of them.
Using that $H^{n+m}(\R^d \times \R^d) \simeq H^{n} \otimes H^{m}$ valid for $m,n \geq 0,$
we get 
\begin{align*}
&\text{\textbullet}\quad 
\left| \int_s^t (\delta u_{st} \otimes \dot{\nu}_r,  \Phi) dr  \right| \leq \int_s^t |\delta u_{sr}|_{-1}  [\nu]_{\mathrm{Lip}}  dr |  \Phi |_{H^1 \otimes H^2} \lesssim |t-s|^{1 + \alpha^*} |\Phi|_{H^3(\R^d \times \R^d)}, 
\\
&\text{\textbullet}\quad 
|  (u^{\natural}_{st} \otimes v_s ,  \Phi)|\le |v_s|_{0} |t-s|^{\zeta} | \Phi|_{H^3 \otimes H^0}
\lesssim  |t-s|^{\zeta} |\Phi|_{H^3(\R^d \times \R^d)} 
\\
&\text{\textbullet}\quad 
|(u_{st}^{\flat} \st B_{st}^1 v_s  , \Phi) |
\lesssim |v_s|_0 |t-s|^{\zeta }  | \Phi|_{H^2 \otimes H^1} \lesssim  |t-s|^{\zeta }  | \Phi|_{H^3( \R^d \times \R^d)} ,
\intertext{while:}
&\text{\textbullet}\quad 
|(u_{st}^{\flat} \otimes v_{st}^{\flat} , \Phi) | 
\lesssim |t-s|^{\zeta - \alpha} |t-s|^{\alpha^*} | \Phi |_{H^2 \otimes H^1} = |t-s|^{\zeta - \alpha + \alpha*}| \Phi |_{H^3(\R^d \times \R^d) }.
\end{align*}
Note that in the last estimate, we have used that $u^{\flat} \in C_2^{\zeta - \alpha}([0,T]; H^{-2})$ and $v^{\flat} \in C_2^{\alpha^*}([0,T]; H^{-1})$.
Letting $\zeta^* := (1 + \alpha^*)\wedge \zeta \wedge (\zeta - \alpha + \alpha*),$ the result follows. 
\end{proof}

From the above proof we can extract the following result.

\begin{lemma}[Product Formula] \label{ItoInnerProduct}
Suppose $v \in L^{\infty}([0,T]; H^3)$, $v^{\flat} \in C_2^{2 \alpha}([0,T]; H^2)$, $v^{\natural} \in C_2^{3 \alpha}([0,T]; H^0)$ and $\dot{\nu} \in L^{\infty}([0,T]; H^2)$. Then we have the scalar equality
\begin{align*}
\delta (u,v)_{st}  & = \int_s^t (\dot{\mu}_r, v_r) + (u_r, \dot{\nu_r}) dr + (u_s, \bar{\Gamma}_{st}^{1} v_s + \bar{\Gamma}_{st}^{2} v_s ) + (u,v)_{st}^{\natural}
\end{align*}
where we have defined the operators
$$
\bar{\Gamma}_{st}^{1} = A^{1,*}_{st} + B_{st}^1,  \quad  \bar{\Gamma}_{st}^{2} = A^{2,*}_{st} + B_{st}^2 +  A_{st}^{1,*} B_{st}^1,
$$
and we have
$$
|(u,v)_{st}^{\natural} | \lesssim |t-s|^{\zeta^*},
$$
for some $\zeta^* >1$.
\end{lemma}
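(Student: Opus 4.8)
The plan is to read this off from the computation in the proof of Proposition \ref{ItoTensorProduct} by contracting the tensor identity \eqref{uTensorvEquation} along the diagonal $\{x=y\}\subset\R^d\times\R^d$. One cannot simply apply the diagonal functional $\Phi\mapsto\int_{\R^d}\Phi(x,x)\,dx$ to the identity written in $H^{-3}(\R^d\times\R^d)$, since that functional fails to be continuous on $H^3(\R^d\times\R^d)$ as soon as $d$ is large; rather, the contraction has to be carried out term by term, and this is exactly what the reinforced regularity imposed on $v,v^{\flat},v^{\natural},\dot\nu$ compensates for. Equivalently --- and this is what I would actually write --- one redoes the short algebraic computation directly at the level of the scalar pairing.

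First I would start from the elementary identity
\[
\delta(u,v)_{st}=(u_s,\delta v_{st})+(\delta u_{st},v_s)+(\delta u_{st},\delta v_{st}),
\]
and insert the controlled expansions $\delta u_{st}=A^1_{st}u_s+u^{\flat}_{st}$, $u^{\flat}_{st}=\delta\mu_{st}+A^2_{st}u_s+u^{\natural}_{st}$ of \eqref{FlatSmoothInSpace}--\eqref{FlatSmoothInTime}, together with the analogous expansions for $v$ (with $B,\nu,v^{\flat},v^{\natural}$ in place of $A,\mu,u^{\flat},u^{\natural}$). Using the defining relations $(A^i_{st}u_s,v_s)=(u_s,A^{i,*}_{st}v_s)$ of the adjoints, all terms not carrying an increment of $\mu$ or $\nu$ regroup exactly into $(u_s,\bar\Gamma^1_{st}v_s)+(u_s,\bar\Gamma^2_{st}v_s)$, while $(\delta\mu_{st},v_s)=\int_s^t(\dot\mu_r,v_s)\,dr$ and $(u_s,\delta\nu_{st})=\int_s^t(u_s,\dot\nu_r)\,dr$ turn into $\int_s^t[(\dot\mu_r,v_r)+(u_r,\dot\nu_r)]\,dr$ up to the correction $-\int_s^t[(\dot\mu_r,\delta v_{sr})+(\delta u_{sr},\dot\nu_r)]\,dr$. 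This yields the stated identity with
\begin{align*}
(u,v)^{\natural}_{st}&=-\int_s^t\big[(\dot\mu_r,\delta v_{sr})+(\delta u_{sr},\dot\nu_r)\big]\,dr+(u^{\natural}_{st},v_s)+(u_s,v^{\natural}_{st})\\
&\quad+(u^{\flat}_{st},v^{\flat}_{st})+(u^{\flat}_{st},B^1_{st}v_s)+(A^1_{st}u_s,v^{\flat}_{st}),
\end{align*}
which is nothing but the diagonal contraction of \eqref{eq:tensorEqn2}.

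It then remains to estimate these six contributions, each of which is now a bona fide duality bracket $H^{-k}\times H^{k}$ thanks to the hypotheses on $v$. For the drift corrections one uses $|\delta v_{sr}|_{2}\lesssim|r-s|^{\alpha}$ (from $\delta v_{sr}=v^{\flat}_{sr}+B^1_{sr}v_s$, $v^{\flat}\in C^{2\alpha}_2(H^2)$ and $v\in L^\infty(H^3)$) together with $[\mu]_{\mathrm{Lip}}<\infty$, and $|\delta u_{sr}|_{-1}\lesssim|r-s|^{\alpha^*}$ (Lemma \ref{aprioriLemma}) together with $\dot\nu\in L^\infty(H^2)$, which bound the two time integrals by $|t-s|^{1+\alpha}$ and $|t-s|^{1+\alpha^*}$; for $(u^{\natural}_{st},v_s)$ one combines $u^{\natural}\in C^{\zeta}_2(H^{-3})$ (Theorem \ref{aprioriEstimate}) with $v_s\in H^3$, yielding $|t-s|^{\zeta}$; for $(u_s,v^{\natural}_{st})$ one uses $v^{\natural}\in C^{3\alpha}_2(H^0)$, yielding $|t-s|^{3\alpha}$; for $(u^{\flat}_{st},v^{\flat}_{st})$ one pairs $u^{\flat}\in C^{\zeta-\alpha}_2(H^{-2})$ (Lemma \ref{aprioriLemmaFlat}) with $v^{\flat}\in C^{2\alpha}_2(H^2)$, yielding $|t-s|^{\zeta+\alpha}$; for $(u^{\flat}_{st},B^1_{st}v_s)$ one uses $u^{\flat}\in C^{\zeta-\alpha}_2(H^{-2})$ and $\|B^1_{st}\|_{\mathcal{L}(H^3;H^2)}\lesssim|t-s|^{\alpha}$, yielding $|t-s|^{\zeta}$; and for $(A^1_{st}u_s,v^{\flat}_{st})$ one uses $|A^1_{st}u_s|_{-1}\lesssim|t-s|^{\alpha}|u_s|_0$ and $v^{\flat}\in C^{2\alpha}_2(H^1)$, yielding $|t-s|^{3\alpha}$. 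Since $\alpha>1/3$ and $\zeta>1$, every exponent is strictly larger than $1$, so $\zeta^*:=(1+\alpha^*)\wedge\zeta\wedge3\alpha>1$ does the job. I expect the only delicate point to be this bookkeeping: one must match the negative-regularity index of each rough object produced by $u$ (namely $u^{\natural}\in H^{-3}$, $u^{\flat}\in H^{-2}$, $A^1_{st}u_s\in H^{-1}$) against a companion of nonnegative order on the $v$-side --- which is exactly why $v\in H^3$, $v^{\flat}\in H^2$, $v^{\natural}\in H^0$ are assumed --- while simultaneously checking that the resulting H\"older exponents all stay above $1$; no analytic ingredient beyond Proposition \ref{ItoTensorProduct} and the a priori estimates of this section is needed.
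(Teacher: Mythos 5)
Your argument is correct and is essentially the paper's own proof made explicit: the paper simply applies the extended pairing $(\cdot,\cdot):H^{-n}\otimes H^{n}\to\R$ term by term to the expression \eqref{eq:tensorEqn2} for $(u\otimes v)^{\natural}$ obtained in the proof of Proposition \ref{ItoTensorProduct}, which is exactly your diagonal contraction together with the same regularity bookkeeping ($u^{\natural}$ against $v\in H^3$, $u^{\flat}$ against $v^{\flat}\in H^2$ and $B^1v_s$, $A^1u_s$ against $v^{\flat}$, and the two drift corrections via Lemmas \ref{aprioriLemma} and \ref{aprioriLemmaFlat}). Your term-by-term estimates and the resulting exponent $\zeta^*=(1+\alpha^*)\wedge\zeta\wedge 3\alpha>1$ match what the paper's proof yields, so nothing further is needed.
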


\begin{proof}
The inner product $(\cdot, \cdot)$ on $L^2(\R^d)$ can be extended to a linear mapping 
$$
(\cdot, \cdot) : H^{-n} \otimes H^n \rightarrow \R 
$$
for every $n$. 
By the assumptions on $v$, $v^{\flat}$ and $\dot{\nu}$ we can apply this linear transformation to both expressions for $(u \otimes v)^{\natural}$ in the proof of Proposition \ref{ItoTensorProduct}, giving the result. 
\end{proof}

\section{Energy estimates} \label{section:EnergyEstimates}

In this section we establish the a priori estimates for the finite-energy solutions to \eqref{MainEq}. Let us first make a heuristic derivation of our approach. In the classical setting, i.e.\ when $Z$ is a smooth path we can find the equation for $u_t^2$ by the chain rule;
$$
\partial_t u^2_t = \Delta u^2_t - 2|\nabla u_t|^2   + 2 u_t \mathrm{div}(F(u_t))+ \beta_j \nabla u^2_t \dot{Z}_t^j .
$$

For a sufficiently regular function $m : [0,T] \times \R^d \rightarrow \R$ we get from the product formula
\begin{align*}
\partial_t (u^2_t, m_t) 
&  = (u_t^2, \Delta m_t) - 2( |\nabla u_t|^2, m_t)   - 2( F(u_t), \nabla(u_t m_t))  
\\
& \quad \quad 
- (u_t^2, \mathrm{div}( \beta_j m_t) ) \dot{Z}_t^j + (u_t^2, \partial_t m_t)
\\
&  = (u_t^2, \Delta m_t+ \partial_t m_t - \mathrm{div}( \beta_j m_t)  \dot{Z}_t^j  ) - 2( | \nabla u_t|^2, m_t)   - 2( F(u_t), \nabla(u_t m_t)).
\end{align*}

Thus, if we can solve the backward equation 
\begin{equation} \label{mFunction}
\partial_t m_t = - \Delta m_t+ \mathrm{div}( \beta_j m_t) \dot{Z}_t^j,\quad m_T = 1,
\end{equation}
integrating in time, we get the following weighted energy equality
\begin{equation} \label{EnergyEquality}
 (u_t^2, m_t) + \int_0^t 2( |\nabla u_r|^2, m_r) dr = |u_0|^2_{0} - 2 \int_0^t ( F(u_r) , \nabla( u_r m_r)) dr .
\end{equation}
We will show that \eqref{mFunction} has a solution and how to make sense of the dual pairing $(u^2_t,m_t)$. In addition we show that $m$ is uniformly bounded away from 0 and $\infty$, which will lead to the energy estimates. In fact we have the following theorem.

\begin{theorem} \label{EnergyEqualityTheorem}
Suppose $\Z$ is a geometric rough path and $\beta_j \in C_b^6(\R^d)$ for all $j$. Given $F$ with bounded derivative we have the following estimate, for every  $u,$ finite-energy solution to \eqref{MainEq}:
\begin{equation} \label{FBoundedEnergy}
\sup_{t\in[0,T]}|u_t|_0^2 + \int_0^T  |\nabla u_r|_0^2 dr  \leq C|u_0|_0^2 ,
\end{equation}
where the above constant depends on the quantities $T,|\nabla F |_{\infty},  [\delta Z ]_{\alpha},  [\ZZ ]_{2 \alpha} $ and  $\|\beta \|_{C_b^6},$ but not on $u.$

Assuming now that $d=1$ and $F(u)=-\frac{1}{2}u^2$
there exists a time $T_0$ and a constant $C,$ 
both depending on $\|\beta \|_{C^6_b}, [\delta Z ]_{\alpha},  [\ZZ ]_{2 \alpha}$ and $|u_0|_{L^2}$ only,
such that every finite-energy solution $u$ to
$$
\partial_t u = \partial _{x}^2u - u\partial _xu + \beta_j \partial_x u \dot{Z}_t^j
$$
satisfies
\begin{equation} \label{BurgersBoundedEnergy}
\sup_{0 \leq t \leq T_0} |u_t |_0^2 + \int_0^{T_0} |\nabla u_r |_0^2 dr
\leq C .
\end{equation}
\end{theorem}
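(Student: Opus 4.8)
The plan is to run rigorously the formal computation preceding the theorem. The weight $m$ solving \eqref{mFunction} plays a double role: it absorbs the energy produced by the rough transport term, so that the weighted energy $(u_t^2,m_t)$ evolves with \emph{no rough contribution} and becomes amenable to the classical Gr\"onwall and Bihari--LaSalle inequalities; and it is bounded above and below by positive constants, so that $(u_t^2,m_t)$ is comparable to $|u_t|_0^2$. \textbf{Step 1 (the weight).} I would first establish the analogue of Proposition \ref{MeasureChange}: \eqref{mFunction} admits a solution $m$, smooth in space with all derivatives bounded on $[0,T]\times\R^d$, satisfying $c\le m_t(x)\le C$ for every $(t,x)$, with $c,C>0$ depending only on $T$, $\|\beta\|_{C^6_b}$, $[\delta Z]_\alpha$ and $[\ZZ]_{2\alpha}$. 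Existence of $m$ as a solution of a \emph{linear} rough transport equation is within the scope of \cite{BG,DGHT}; the essential new point is the quantitative two-sided bound, which I would obtain by smooth approximation $\Z(n)\to\Z$. For smooth $Z(n)$ a Feynman--Kac (equivalently, flow/characteristics) representation exhibits $\log m^n_t(x)$ as a rough integral of the bounded field $\mathrm{div}\beta_j$ along the rough transport flow, i.e.\ of the type $\int_t^T\mathrm{div}\beta_j(\xi_r)\,dZ^j_r(n)$; such an object is controlled by $\|\beta\|_{C^6_b}$, $[\delta Z(n)]_\alpha$, $[\ZZ(n)]_{2\alpha}$ and $T$ \emph{only} -- crucially not by any $1$-variation of $Z(n)$ -- uniformly in $n$. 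Passing to the limit yields $m$ with the stated bounds, positivity and spatial regularity coming from the parabolic part.

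\textbf{Step 2 (weighted energy equality).} A direct computation shows that if $u$ is a finite-energy solution of \eqref{MainEq} and $m$ is as above, then $w:=um$ solves, in the sense of Definition \ref{def:solutionWeak}, the dual rough transport equation $\partial_t w=\partial_t\nu+\mathrm{div}(\beta_j w)\dot Z^j$ with $\dot\nu_r=m_r\Delta u_r-u_r\Delta m_r+m_r\,\mathrm{div}F(u_r)$ Lipschitz into $H^{-2}$, and unbounded rough driver $B^1_{st}\phi=\mathrm{div}(\beta_j\phi)\delta Z^j_{st}$, $B^2_{st}\phi=\mathrm{div}(\beta_j\mathrm{div}(\beta_i\phi))\ZZ^{i,j}_{st}$. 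Applying Proposition \ref{ItoTensorProduct} to $u\otimes w$ and contracting onto the diagonal (a mollification argument in the spirit of Lemma \ref{ItoInnerProduct}, but with $w$ only in the energy class, so the DiPerna--Lions-type commutators must be controlled by $u\in L^2([0,T];H^1)$) one gets an expansion for $(u_t,w_t)=(u_t^2,m_t)$ whose effective rough operators are $\bar\Gamma^1_{st}=A^{1,*}_{st}+B^1_{st}$ and $\bar\Gamma^2_{st}=A^{2,*}_{st}+B^2_{st}+A^{1,*}_{st}B^1_{st}$. Since $A^{1,*}_{st}\phi=-\mathrm{div}(\beta_j\phi)\delta Z^j_{st}$ one has $\bar\Gamma^1_{st}=0$, and using the geometricity relation $\ZZ^{i,j}_{st}+\ZZ^{j,i}_{st}=\delta Z^i_{st}\delta Z^j_{st}$ one checks $\bar\Gamma^2_{st}=0$ as well; the rough contributions therefore cancel entirely, and since the surviving drift is absolutely continuous in time, the remainder $(u,w)^\natural\in C_2^{1+}$ must vanish identically by uniqueness in the sewing lemma. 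Unwinding the drift by elementary integrations by parts yields, for every $t$,
$$(u_t^2,m_t)+2\int_0^t(|\nabla u_r|^2,m_r)\,dr=(u_0^2,m_0)-2\int_0^t(F(u_r),\nabla(u_rm_r))\,dr,$$
which is \eqref{EnergyEquality}.

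\textbf{Step 3 (Gr\"onwall / Bihari--LaSalle).} For Lipschitz $F$, write $(F(u_r),\nabla(u_rm_r))=(F(u_r)-F(0),\nabla(u_rm_r))$ (using $\int\nabla(u_rm_r)=0$) and bound it by $|\nabla F|_\infty|u_r|_0(\|m_r\|_\infty|\nabla u_r|_0+\|\nabla m_r\|_\infty|u_r|_0)$; absorbing the gradient into $2(|\nabla u_r|^2,m_r)\ge 2c|\nabla u_r|_0^2$ by Young and using $m_r\ge c$ gives $|u_t|_0^2+\tilde c\int_0^t|\nabla u_r|_0^2\,dr\le (C/c)|u_0|_0^2+C'\int_0^t(u_r^2,m_r)\,dr$, and Gr\"onwall yields \eqref{FBoundedEnergy}. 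For $d=1$, $F(u)=-\tfrac12u^2$, one computes $-2(F(u_r),\nabla(u_rm_r))=\tfrac23(u_r^3,\partial_xm_r)$ and $|(u_r^3,\partial_xm_r)|\le\|\partial_xm_r\|_\infty\|u_r\|_{L^3}^3\lesssim\|u_r\|_0^{5/2}\|\partial_xu_r\|_0^{1/2}$ by the one-dimensional Gagliardo--Nirenberg inequality; Young absorbs the gradient and leaves, with $y(t):=(u_t^2,m_t)$, the inequality $y(t)\le C|u_0|_0^2+C''\int_0^t y(r)^{5/3}\,dr$. Bihari--LaSalle bounds $y$ up to the blow-up time $T_0$ of $\dot z=C''z^{5/3}$, $z(0)=C|u_0|_0^2$, with $T_0$ depending only on $|u_0|_0$, $\|\beta\|_{C^6_b}$, $[\delta Z]_\alpha$, $[\ZZ]_{2\alpha}$ (through $c$, $C$ and $\|\partial_xm\|_\infty$); after shrinking $T_0$ slightly this gives \eqref{BurgersBoundedEnergy} together with the bound on $\int_0^{T_0}|\nabla u_r|_0^2\,dr$.

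\textbf{The main obstacle} is Step 1: producing $m$ with two-sided bounds that are \emph{quantitative} in the rough-path norms $[\delta Z]_\alpha,[\ZZ]_{2\alpha}$ (and $\|\beta\|_{C^6_b}$, $T$) only, rather than in some path-variation of $Z$ -- this is precisely what makes the classical Gr\"onwall and Bihari arguments available in the rough regime. The secondary technical point is the rigorous diagonal contraction of the tensor equation in Step 2, for which the energy regularity $u\in L^2([0,T];H^1)$ is exactly what is needed.
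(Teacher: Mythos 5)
Your overall architecture --- a weight $m$ solving the backward equation \eqref{mFunction}, exact cancellation of the rough contributions in the weighted pairing, then Gr\"onwall (resp.\ Gagliardo--Nirenberg, Young, Bihari--LaSalle) --- is the paper's, and your Steps 1 and 3 essentially reproduce its argument. In Step 1 the algebra of the cancellation ($\bar\Gamma^1=\bar\Gamma^2=0$ using geometricity) is correct, and your route to the two-sided bound on $m$ is viable; the paper's own lower bound is obtained by a slicker duality, $1=E[1]^2\le E[e^{X}]\,E[e^{-X}]$ via Cauchy--Schwarz, where $E[e^{-X}]$ is itself the Feynman--Kac solution of another rough backward equation and hence bounded above.

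The genuine gap is in Step 2. You propose to set $w=um$, tensorize $u\otimes w$, and then ``contract onto the diagonal'' by mollification, conceding that the DiPerna--Lions-type commutators must then be controlled using only $u\in L^2([0,T];H^1)$. That concession is the entire difficulty, not a secondary point: testing the tensor equation against a mollified diagonal kernel $\Phi_\epsilon$ makes $|\Phi_\epsilon|_{H^3(\R^d\times\R^d)}$ blow up as $\epsilon\to0$, so the remainder estimate coming from Proposition \ref{ItoTensorProduct} degenerates and the whole sewing argument has to be redone with renormalized, $\epsilon$-dependent norms --- this is the machinery of \cite{DGHT} and \cite{HH} for the linear case, which the paper explicitly declines to develop (see the Remark following the lemma establishing \eqref{TensorInnerProduct}). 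The paper's key step sidesteps the diagonal blow-up altogether: it solves the \emph{backward tensor equation} \eqref{TensorMeasureChangeEquation} with the smooth final datum $M^N_T=\sum_{n\le N}e_n\otimes e_n$, so that $M^N_t$ remains in $H^4(\R^d\times\R^d)$ for all $t$ (Proposition \ref{TensorMeasureChangeProposition}); Lemma \ref{ItoInnerProduct} then applies with no commutators, the remainder of $(u^{\otimes 2},M^N)$ is an additive increment of H\"older exponent greater than one and hence vanishes, and only afterwards does one let $N\to\infty$, identifying $(u_t^{\otimes2},M^N_t)\to(u_t^2,m_t)$ through the Feynman--Kac representation, the rough Liouville formula for $|\nabla\phi_{t,s}|$, and monotone convergence. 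Without either that construction or a full execution of the commutator estimates, your Step 2 does not establish the weighted energy equality \eqref{EnergyEquality}, on which both \eqref{FBoundedEnergy} and \eqref{BurgersBoundedEnergy} rest.
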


The remaining pararagraphs of this section will be devoted to the proof of Theorem \ref{EnergyEqualityTheorem}.
It consists mainly in justifying the relation \eqref{EnergyEquality}.

\subsection{Tensored equation}

Although testing the solution against itself is a priori not a well defined operation, the tensor can always be defined canonically as demonstrated in Proposition \ref{ItoTensorProduct}. As an immediate corollary we get the following. We define the symmetric tensor $a \st b = \frac{1}{2}( a \otimes b + b \otimes a)$.

\begin{corollary}
The tensor $u^{\otimes 2}_t(x,y) := u_t(x) u_t(y)$ satisfies
$$
\partial_t u_t^{\otimes 2} = 2 u_t \st \Delta u_t +2 u_t \hat{\otimes} \mathrm{div}(F(u_t)) + 2( I \st  \beta_j \nabla ) u_t^{\otimes 2} \dot{Z}_t^j
$$
on the scale $H^{n}(\R^d \times \R^d)$, i.e.\ if we define 
$$
\Gamma^1_{st} := 2 A_{st}^1 \st I \hspace{.5cm} \Gamma^2_{st} := 2 A_{st}^2 \st I + A_{st}^1 \otimes A_{st}^1
$$
we have that
\begin{equation} \label{tensorEq}
\delta u_{st}^{\otimes 2} = \int_s^t 2 u_r \hat{\otimes} \Delta u_r  + 2 u_r \hat{\otimes} \mathrm{div}(F(u_r)) dr + (\Gamma_{st}^1 + \Gamma_{st}^2) u_s^{\otimes 2} + u_{st}^{\otimes 2, \natural} 
\end{equation}
where $|(u_{st}^{\otimes 2, \natural} , \Phi) | \leq C|t-s|^{\zeta} |\Phi |_{H^{3}(\R^d \times \R^d)}$ for some $\zeta > 1$.
\end{corollary}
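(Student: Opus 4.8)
The statement is an immediate corollary of Proposition \ref{ItoTensorProduct}, applied with $v=u$, $\nu=\mu$, and $B^i=A^i$, so the bulk of the work is just bookkeeping: specializing the tensorized equation \eqref{uTensorvEquation} to the diagonal case and checking that the abstract hypotheses of the proposition are met by a finite-energy solution in the sense of Definition \ref{def:solution}. First I would recall that, by the discussion opening Section \ref{section:apriori}, a finite-energy solution $u$ solves \eqref{AbstractEquation} with drift $\mu_t=\int_0^t[\Delta u_r+\mathrm{div}F(u_r)]\,dr$, which is Lipschitz from $[0,T]$ to $H^{-2}$ thanks to \eqref{estim:F1} (or \eqref{estim:F2} in the Burgers case) together with the boundedness of $u$ in $L^2$; in particular $\dot\mu_r=\Delta u_r+\mathrm{div}F(u_r)\in H^{-2}$ for a.e. $r$. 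Thus $u$ fits the framework of Definition \ref{def:solutionWeak} with the driver $(A^1,A^2)$, and likewise (trivially) for the second copy.

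Next I would simply read off \eqref{uTensorvEquation} with $u\otimes v$ replaced by $u^{\otimes 2}$. The drift becomes $\int_s^t(\dot\mu_r\otimes u_r+u_r\otimes\dot\mu_r)\,dr$; since $\dot\mu_r=\Delta u_r+\mathrm{div}F(u_r)$ this is exactly $\int_s^t 2u_r\hat\otimes\Delta u_r+2u_r\hat\otimes\mathrm{div}(F(u_r))\,dr$ after symmetrizing (the tensor $u^{\otimes 2}$ being symmetric, one may replace $\otimes$ by $\hat\otimes$ under the integral sign). The rough driver $\Gamma^1_{st}=A^1_{st}\otimes I+I\otimes A^1_{st}$ acting on the symmetric tensor $u^{\otimes 2}_s$ equals $2A^1_{st}\hat\otimes I$ acting on $u^{\otimes 2}_s$, and similarly $\Gamma^2_{st}=A^2_{st}\otimes I+I\otimes A^2_{st}+A^1_{st}\otimes A^1_{st}$ collapses to $2A^2_{st}\hat\otimes I+A^1_{st}\otimes A^1_{st}$ on the symmetric subspace; this matches the definitions in the statement. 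Finally, the remainder $u^{\otimes 2,\natural}_{st}:=(u\otimes u)^{\natural}_{st}$ inherits the bound $|(u^{\otimes 2,\natural}_{st},\Phi)|\le C|t-s|^{\zeta^*}|\Phi|_{H^3(\R^d\times\R^d)}$ with $\zeta^*>1$ directly from the conclusion of Proposition \ref{ItoTensorProduct}.

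The only genuine point to verify is that Proposition \ref{ItoTensorProduct} is actually applicable, i.e.\ that both \eqref{uEquation} and \eqref{vEquation} hold in the sense of Definition \ref{def:solutionWeak} with the \emph{same} rough path data and Lipschitz $H^{-2}$-valued drifts; this is where one uses that a finite-energy solution has, by Definition \ref{def:solution}(ii), a remainder $u^{\natural}\in C_2^{1+}([0,T];H^{-3})$, so that condition \eqref{RemainderCondition} is satisfied and $u$ indeed solves \eqref{AbstractEquation}. I do not anticipate a real obstacle here: the coefficients are $C^6_b$ as required for the operator-norm bounds on $(A^1,A^2)$, and all the structural identities (operator Chen relation $\delta A^2_{s\theta t}=A^1_{\theta t}A^1_{s\theta}$, the estimates \eqref{smoothing_op} on the smoothing operators) were already recorded in Section \ref{RoughPaths} and Section \ref{section:apriori}. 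Hence the proof is a one-line invocation: ``apply Proposition \ref{ItoTensorProduct} with $v=u$ and restrict to the symmetric part,'' followed by the elementary algebraic identifications above.
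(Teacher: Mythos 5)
Your proposal is correct and matches the paper's own treatment: the paper presents this corollary as an immediate consequence of Proposition \ref{ItoTensorProduct} (applied with $v=u$, $\nu=\mu$, $B^i=A^i$), which is exactly what you do, including the check that a finite-energy solution fits Definition \ref{def:solutionWeak} and the symmetrization identifying $\Gamma^1,\Gamma^2$ with $2A^1\st I$ and $2A^2\st I+A^1\otimes A^1$. No gaps.
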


\subsection{Lyapunov weight function}

In this section we introduce an auxiliary function that will allow us to approximate the solution of \eqref{mFunction} in the dual pairing \eqref{EnergyEquality}. 
This method can be thought of as a way of approximating the Dirac-delta on the diagonal $\{x=y\}$ tailored to the equation at hand.

The main tool that we will use is the Feynman-Kac formula extended to the rough path setting as demonstrated in \cite{DFS} and later explored in the $L^2(\R^d)$ setting in \cite{FNS}. Let us briefly explain the idea.

Let $\phi_{t,s}(x)$ denote the flow of the SDE
\begin{equation} \label{SmoothSDE}
d X_s = \sigma_i(X_s) dB^i_s + \beta_j(X_s) \dot{Z}_s^j ds ,
\end{equation} 
where $B$ is an $I$-dimensional Brownian Motion, defined on some probability space and $\sigma_i :\R^d\to\R^d$ is bounded and measurable for each $i\in\{1,\dots,I\}$.
If $Z$ is a smooth path, it is well known that the above equation gives rise to the solution to the backward equation
\begin{equation} \label{SmoothKolmogorov}
\partial_t v  = -\frac{1}{2} \sigma_{i}^k \sigma_{j}^k  \partial_{i }\partial_{j} v + \mathrm{div}( \beta_j v) \dot{Z}_t^j \enskip ,
\end{equation}
with a given final condition $v_T$.
In fact, the classical Feynman-Kac formula tells us that we can represent the solution as
\begin{equation} \label{SmoothFeynmanKac}
v_t(x) = E\Big[ v_T(\phi_{t,T}(x))  \exp \Big\{ \int_t^T \mathrm{div} \beta_j(\phi_{t,s}(x))\dot{Z}_s^j ds \Big\} \Big] .
\end{equation}

In \cite{DFS,FNS} it is shown that the expressions, \eqref{SmoothSDE}, \eqref{SmoothKolmogorov} and  \eqref{SmoothFeynmanKac} extend to case where $Z$ is a geometric rough path and the relationship between these equations is still preserved in the limit. 
The solution $X$ 
to \eqref{SmoothSDE} has however no meaning a priori, even in the rough path sense.
The twist is that the latter should be interpreted in a pathwise way as the solution of the rough differential equation
\begin{equation} \label{RoughSDE}
dX_s = V_j(X_s) d \tilde{\Z}_s^j
\end{equation}
where $\tilde{\Z}$ denotes now an enhancement \emph{of the pair} $(B,Z)$, that is
$$
\tilde{Z}_s = 
\left( \begin{array}{c}
B_s \\
Z_s \\
\end{array} \right)
\hspace{.3cm} \textrm{ and } \hspace{.3cm}
\tilde{\ZZ}_{st} = 
\left( \begin{array}{cc}
\mathbb{B}_{st}  & \int_s^t \delta Z_{sr} dB_r\\
\int_s^t \delta B_{sr} dZ_r & \ZZ_{st} \\
\end{array} \right)
$$
and $V_j(x) = \sigma_j(x)$ for $j =1, \dots , d$ and $V_j(x) = \beta_j(x)$, for $j=d+1, \dots d + J$. The integral $\int_0^{t } V_j(X_r) d \tilde{\Z}_r$ is to be understood as $I_{t}$ from Lemma \ref{sewingLemma} with the local expansion
\begin{equation} \label{RoughIntegralExpansion}
G_{st} = V_j(X_s) \tilde{Z}_{st}^j +  (\partial_i V_j)(X_s)  V_i(X_s) \tilde{ \ZZ}^{i,j}_{st}\,.
\end{equation}

\begin{remark}
The reader should note that we did not define the iterated integrals of the Brownian motion in the definition of $\tilde{\ZZ}$. As it turns out in the present context, it does not matter how we choose $\mathbb{B}$ 
(It\^{o} or Stratonovich integral will do). This is because we restrict our attention to $\sigma=\mathrm{cst} \equiv1,$ in which case it is easily seen, considering \eqref{RoughIntegralExpansion}, that $X$ is independent of the choice of enhancement we make for $B.$
\end{remark}

Equation \eqref{SmoothFeynmanKac} now becomes
\begin{equation} \label{RoughFeynmanKac}
v_t(x) = E\Big[ v_T(\phi_{t,T}(x))  \exp \Big\{ \int_t^T \mathrm{div} \beta_j(\phi_{t,s}(x)) d\tilde{\Z}_s^j  \Big\} \Big] ,
\end{equation}
where $\phi_{t,s}(x)$ denotes the flow of \eqref{RoughSDE}. 
The above formula reveals the spatial regularity we can expect from the solutions to \eqref{SmoothKolmogorov}. Namely the smoothness of $\sigma_j$, $\beta_j$ should be inherited by the flow, $\phi_{t,s}(\cdot)$, and in turn $v$ via \eqref{RoughFeynmanKac}. 

Based on this we will introduce two equations and their corresponding solutions that will be helpful for obtaining energy estimates. For a proof of the following result we refer to \cite{FNS}.

\begin{proposition} \label{TensorMeasureChangeProposition}
Given a function $M_T \in H^6(\R^d \times \R^d)$ and vector field $\beta_j \in C^6_b(\R^d)$ there exists a solution $M$ to 
\begin{equation} \label{TensorMeasureChangeEquation}
\partial_t M_t = - (\nabla_x + \nabla_y)^2 M_t  +  [ I \otimes  \mathrm{div}_y(\beta_j \cdot)  + \mathrm{div}_x( \beta_j \cdot) \otimes I] M_t \dot{Z}_t^j,
\end{equation}
in $[0,T]\times\R^d \times \R^d,$
with final condition $M_T$, i.e.\
\begin{equation} \label{TensorMeasureChangeURD}
\delta M_{st} = - \int_s^t (\nabla_x + \nabla_y)^2 M_r dr   - \Gamma_{st}^{1,*} M_s +  \Gamma_{st}^{2,*} M_s  + M_{st}^{\natural}
\end{equation}
where $|(M_{st}^{\natural}, \Phi)| \lesssim |t-s|^{\zeta} |\Phi|_{H^3(\R^d \times \R^d)},$
and above we have used the short-hand notation $(\nabla_x + \nabla_y)^2 = (\mathrm{div}_x + \mathrm{div}_y)(\nabla_x + \nabla_y).$

The solution, which is continuous as a path with values in $H^4(\R^d \times \R^d),$ is given explicitly by
\begin{equation} \label{TensorMeasureChangeSolution}
 M_t(x,y) = E\Big[ M_T\big(\phi_{t,T}(x), \phi_{t,T}(y)\big)  \exp \Big\{ \int_t^T \mathrm{div} \beta_j(\phi_{t,s}(x)) + \mathrm{div} \beta_j(\phi_{t,s}(y)) d\tilde{\Z}_s^j \Big\} \Big],
\end{equation}
and in fact it holds
$|M_{st}^{\natural}|_{H^3(\R^d \times \R^d)}  \lesssim |t-s|^{\zeta} $
and $|M_{st}^{\flat}|_{H^2(\R^d \times \R^d)} \lesssim |t-s|^{2 \alpha}$.
\end{proposition}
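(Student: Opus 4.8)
The plan is to obtain Proposition~\ref{TensorMeasureChangeProposition} as a two-dimensional instance of the (scalar) Feynman--Kac theory of \cite{DFS,FNS}, applied to a product flow on $\R^d\times\R^d$. First I would observe that the tensored operator
$[I\otimes \mathrm{div}_y(\beta_j\,\cdot) + \mathrm{div}_x(\beta_j\,\cdot)\otimes I]$
together with the Laplacian $(\nabla_x+\nabla_y)^2$ on $\R^{2d}$ is \emph{exactly} the Kolmogorov operator attached to the rough SDE~\eqref{RoughSDE} run simultaneously from the two initial points $x$ and $y$ (with the \emph{same} driving noise $\tilde\Z$, in particular the same Brownian motion $B$). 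Concretely, on $\R^{2d}$ consider the coefficient fields $\widehat V_j(x,y)=(\sigma_j(x),\sigma_j(y))$ for $j\le d$ and $\widehat V_j(x,y)=(\beta_j(x),\beta_j(y))$ for $d<j\le d+J$; these lie in $C^6_b(\R^{2d})$ since $\beta_j\in C^6_b(\R^d)$ and $\sigma_j\equiv 1$. The corresponding generator, read off from the expansion~\eqref{RoughIntegralExpansion} in $2d$ variables, is precisely $-(\nabla_x+\nabla_y)^2 \,\cdot\, /1$ for the diffusion part (recall $\sigma\equiv 1$ so $\tfrac12\sigma^k_i\sigma^k_j\partial_i\partial_j = \tfrac12\Delta$; the normalisation $(\nabla_x+\nabla_y)^2$ rather than $\tfrac12$ of it is just a choice of $\sigma$, or absorbed into the statement) plus the transport/divergence part $[I\otimes \mathrm{div}_y(\beta_j\cdot)+\mathrm{div}_x(\beta_j\cdot)\otimes I]\dot Z^j_t$. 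Hence~\eqref{TensorMeasureChangeEquation} is the backward Kolmogorov equation on $\R^{2d}$ associated to this product rough flow, and~\eqref{TensorMeasureChangeSolution} is nothing but its Feynman--Kac representation~\eqref{RoughFeynmanKac} written in the two sets of variables, using that the product flow is $(\phi_{t,T}(x),\phi_{t,T}(y))$ and that $\mathrm{div}\,\widehat\beta_j(x,y)=\mathrm{div}\beta_j(x)+\mathrm{div}\beta_j(y)$.

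With this identification in place, the four assertions to prove are: existence of $M$ solving~\eqref{TensorMeasureChangeURD}; the explicit formula~\eqref{TensorMeasureChangeSolution}; continuity of $t\mapsto M_t$ in $H^4(\R^d\times\R^d)$; and the remainder bounds $|M^{\natural}_{st}|_{H^3}\lesssim|t-s|^\zeta$, $|M^{\flat}_{st}|_{H^2}\lesssim|t-s|^{2\alpha}$. For the first three I would simply invoke the $L^2$ Feynman--Kac result of \cite{FNS} verbatim, applied on $\R^{2d}$ with coefficients $\widehat V_j\in C^6_b$ and final datum $M_T\in H^6(\R^{2d})$: that reference gives a solution to the associated backward equation that is continuous into $H^{6-2}=H^4(\R^{2d})$ together with the stated Feynman--Kac representation. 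The loss of two derivatives ($H^6\to H^4$) is the expected one for a parabolic-type smoothing-free estimate where one only propagates Sobolev regularity along the (smooth-in-space) flow and the Feynman--Kac weight. For the remainder estimates, the key point is that $M^{\natural}$ is defined \emph{implicitly} by~\eqref{TensorMeasureChangeURD} once $M$, the drift $-\int(\nabla_x+\nabla_y)^2 M_r\,dr$, and the unbounded rough driver $(\Gamma^1,\Gamma^2)$ are known; so $M$ is a solution of an abstract equation of the type covered in Section~\ref{section:apriori} (with drift $\nu_t:=-\int_0^t(\nabla_x+\nabla_y)^2 M_r\,dr$ Lipschitz into $H^{2}(\R^{2d})$ because $M\in L^\infty_tH^4$). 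Then Theorem~\ref{aprioriEstimate} gives $|M^{\natural}_{st}|_{H^{-3}}\lesssim|t-s|^{\zeta^*}$ with $\zeta^*>1$; but here we need the \emph{positive}-regularity bound $|M^{\natural}_{st}|_{H^3}\lesssim|t-s|^\zeta$, which is genuinely stronger. This one I would get by exploiting the extra $H^{6}$-regularity of $M_T$, hence of $M$ (one runs the whole a priori machinery of Section~\ref{section:apriori} ``shifted up'' by three in the Sobolev scale: since $M$ is bounded in $H^6$ rather than merely $H^0$, all the smoothing-operator interpolation arguments in Lemma~\ref{aprioriLemma}, Theorem~\ref{aprioriEstimate} and Lemma~\ref{aprioriLemmaFlat} go through with every index raised by $3$, producing the claimed $H^3$/$H^2$ bounds). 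The bound $|M^{\flat}_{st}|_{H^2}\lesssim|t-s|^{2\alpha}$ is then~\eqref{FlatSmoothInTime} read in the shifted scale, using $\delta\nu\in C^1_t H^2$, $\Gamma^2 M_s\in C^{2\alpha}_t H^2$ and the just-proved $M^{\natural}$ bound.

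The main obstacle, and the only genuinely non-routine step, is upgrading the remainder estimate from negative to positive Sobolev regularity, i.e.\ from $|M^{\natural}_{st}|_{H^{-3}}$ to $|M^{\natural}_{st}|_{H^3}$. The abstract a priori theory of Section~\ref{section:apriori} is tailored to the energy-space solution $u$, which only lives in $H^0$, and there one can only hope for $H^{-3}$ bounds on the remainder; for $M$ the gain comes entirely from the strong a priori regularity $M\in L^\infty([0,T];H^6(\R^{2d}))$ furnished by the Feynman--Kac formula, so the crux is to check that (i) the flow $\phi_{t,s}$ of the rough ODE~\eqref{RoughSDE} is $C^6_b$ in space with bounds uniform on $[0,T]$, and (ii) the explicit expression~\eqref{TensorMeasureChangeSolution} therefore maps $H^6(\R^{2d})$ data to $L^\infty_tH^4$ (for the path) \emph{and} controls the relevant increments so that the shifted-scale versions of Lemma~\ref{aprioriLemma}--Theorem~\ref{aprioriEstimate} apply. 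Both (i) and (ii) are exactly what \cite{DFS,FNS} provide, so modulo quoting those results the proof is essentially a dimension-doubling bookkeeping exercise together with one careful re-run of the Section~\ref{section:apriori} estimates with all Sobolev indices raised by three.
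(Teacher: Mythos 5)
Your identification of \eqref{TensorMeasureChangeEquation} as the backward Kolmogorov equation on $\R^d\times\R^d$ for the rough flow started from two points and driven by the \emph{same} noise, together with the appeal to \cite{DFS,FNS} for existence, the representation \eqref{TensorMeasureChangeSolution} and the $H^4$ continuity, is exactly the paper's route: the paper's proof consists of citing \cite{FNS}, which already provides a bounded $H^3(\R^d\times\R^d)$ solution \emph{with the stated bounds on $M^{\natural}$ and $M^{\flat}$} under the weaker assumption $\beta_j\in C^5_b$, and of noting that $\beta_j\in C^6_b$ upgrades the spatial regularity to $H^4$.

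The place where you deviate, however, contains a genuine gap: you propose to re-derive $|M^{\natural}_{st}|_{H^3}\lesssim|t-s|^{\zeta}$ by rerunning the machinery of Section \ref{section:apriori} ``shifted up by three''. The index count does not work. Shifting the scale in Theorem \ref{aprioriEstimate} by three turns the $H^{-3}$ remainder bound into an $H^{0}$ bound, not an $H^{3}$ bound; to land in $H^{3}$ you must shift by six, which requires $M$ bounded in $H^{6}(\R^d\times\R^d)$ and the drift $t\mapsto -\int_0^t(\nabla_x+\nabla_y)^2M_r\,dr$ Lipschitz into $H^{4}$. That regularity is not available: with $\beta_j\in C^6_b$ the Feynman--Kac formula yields $M$ bounded and continuous only in $H^4$, as the proposition states and as you implicitly concede when you take the drift Lipschitz into $H^2$; your later assertion $M\in L^\infty([0,T];H^6)$ contradicts this and is not furnished by \cite{FNS} --- the flow and the exponential weight lose derivatives relative to $\beta$, and, as the paper remarks, the second-order operator $(\nabla_x+\nabla_y)^2$ is degenerate, so there is no parabolic gain to compensate. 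The positive-regularity bounds on $M^{\natural}$ and $M^{\flat}$ should therefore be quoted from \cite{FNS}, where they come from the representation formula itself rather than from the energy-space a priori estimates, which are tailored to solutions living only in $L^2$.
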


\begin{proof}

In \cite{FNS} it is shown that $M$ is a bounded solution in $H^3(\R^d \times \R^d)$ with the appropriate bounds on $M^{\natural}$ and $M^{\flat}$ under the weaker assumption that $\beta_j \in C_b^5(\R^d)$. When $\beta_j \in C_b^6(\R^d)$ it is easy to see that the solution is actually bounded in $H^4(\R^d \times \R^d)$, and the result follows.
\end{proof}

\begin{remark}
The expert reader would notice that the second order operator in \eqref{TensorMeasureChangeEquation} is degenerate in a way that one only expects a regularization effect in the ``direction parallel to the diagonal''. Therefore, the regularity of the solution comes in general only from the formula \eqref{TensorMeasureChangeSolution} and the assumption $\beta_j \in C^6_b(\R^d).$

Moreover, although for our purpose it is not needed, it turns out that the solution to \eqref{TensorMeasureChangeEquation} is unique in the class of solutions described in Proposition \ref{TensorMeasureChangeProposition}.
\end{remark}

Since the solution $M$ is smooth in space, we may take the inner product between \eqref{tensorEq} and \eqref{TensorMeasureChangeEquation}. 

\begin{lemma}
For any given $M_T \in H^6(\R^d \times \R^d)$ we have the following

\begin{equation} \label{TensorInnerProduct}
\partial_t (u^{\otimes 2}_t, M_t) = -2 (\nabla u_t \otimes \nabla u_t, M_t) + 2( u_t \st \mathrm{div} F(u_t), M_t) . 
\end{equation}
\end{lemma}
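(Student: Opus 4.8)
The plan is to combine the two rough-driver equations we already have---the tensorized equation \eqref{tensorEq} for $u^{\otimes 2}$ and the backward equation \eqref{TensorMeasureChangeEquation}/\eqref{TensorMeasureChangeURD} for the Lyapunov weight $M$---via the product formula of Lemma \ref{ItoInnerProduct}, and then to check that all the rough contributions cancel exactly. Since $M\in C([0,T];H^4(\R^d\times\R^d))$ with $M^{\flat}\in C_2^{2\alpha}(H^2)$ and $M^{\natural}\in C_2^{\zeta}(H^3)$ and $\dot M\in L^\infty(H^2)$ (all from Proposition \ref{TensorMeasureChangeProposition}), the function $M$ (or rather $v=M$ in the notation of that lemma) plays the role of the smooth factor, while $u^{\otimes 2}$ plays the role of $u$. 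Applying Lemma \ref{ItoInnerProduct} with $u\rightsquigarrow u^{\otimes 2}$, $\mu\rightsquigarrow \int_s^t 2u_r\st\Delta u_r + 2u_r\st\mathrm{div}F(u_r)\,dr$, $(A^1,A^2)\rightsquigarrow (\Gamma^1,\Gamma^2)$, $v\rightsquigarrow M$, $\nu\rightsquigarrow -\int_s^t(\nabla_x+\nabla_y)^2M_r\,dr$, and $(B^1,B^2)\rightsquigarrow (-\Gamma^{1,*},\Gamma^{2,*})$, gives
\[
\delta(u^{\otimes 2},M)_{st} = \int_s^t \big(\dot\mu_r,M_r\big) + \big(u_r^{\otimes 2},\dot\nu_r\big)\,dr + \big(u_s^{\otimes 2}, \bar\Gamma^1_{st}M_s + \bar\Gamma^2_{st}M_s\big) + (u^{\otimes 2},M)^{\natural}_{st},
\]
with $|(u^{\otimes 2},M)^{\natural}_{st}|\lesssim |t-s|^{\zeta^*}$ for some $\zeta^*>1$, where $\bar\Gamma^1_{st}=\Gamma^{1,*}_{st}+B^1_{st}$ and $\bar\Gamma^2_{st}=\Gamma^{2,*}_{st}+B^2_{st}+\Gamma^{1,*}_{st}B^1_{st}$.

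The key algebraic point is that the rough increments vanish. By construction $B^1_{st}=-\Gamma^{1,*}_{st}$, so $\bar\Gamma^1_{st}=\Gamma^{1,*}_{st}+B^1_{st}=0$ identically. For the second-order term, $\bar\Gamma^2_{st}=\Gamma^{2,*}_{st}+B^2_{st}+\Gamma^{1,*}_{st}B^1_{st}=\Gamma^{2,*}_{st}+\Gamma^{2,*}_{st}-\Gamma^{1,*}_{st}\Gamma^{1,*}_{st}=2\Gamma^{2,*}_{st}-(\Gamma^{1,*}_{st})^2$. Here one must be slightly careful: $\Gamma^2$ contains the genuinely second-order piece $2A^2\st I$ together with the "cross" piece $A^1\otimes A^1$, and the operator Chen relation $\delta\Gamma^2_{s\theta t}=\Gamma^1_{\theta t}\Gamma^1_{s\theta}$ together with $(A^1_{st})^{\otimes}$-type identities should force $2\Gamma^{2,*}_{st}-(\Gamma^{1,*}_{st})^2$ to vanish---indeed $\Gamma^1_{st}=2A^1_{st}\st I$ squared gives $4(A^1_{st}\st I)^2$, whose symmetric expansion is $2(A^1_{st})^2\st I + 2A^1_{st}\otimes A^1_{st}$, which matches $2\Gamma^2_{st}=4A^2_{st}\st I + 2A^1_{st}\otimes A^1_{st}$ provided $(A^1_{st})^2=2A^2_{st}$ in the symmetric sense; but actually the cancellation is cleaner at the level of the equations: $M$ satisfies the \emph{dual} backward equation precisely so that testing $u^{\otimes 2}$ against $M$ annihilates the transport terms, so I expect $\big(u_s^{\otimes 2},\bar\Gamma^1_{st}M_s+\bar\Gamma^2_{st}M_s\big)=0$. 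Taking $t-s\to 0$ in the sewing-type estimate then kills the $(u^{\otimes 2},M)^{\natural}$ term and leaves only the drift integral.

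It remains to identify the drift. From $\dot\mu_r = 2u_r\st\Delta u_r + 2u_r\st\mathrm{div}F(u_r)$ and $\dot\nu_r = -(\nabla_x+\nabla_y)^2M_r$ we get
\[
\partial_t(u^{\otimes 2}_t,M_t) = 2\big(u_t\st\Delta u_t,M_t\big) + 2\big(u_t\st\mathrm{div}F(u_t),M_t\big) - \big(u_t^{\otimes 2},(\nabla_x+\nabla_y)^2M_t\big).
\]
Integrating by parts on $\R^d\times\R^d$, $\big(u_t^{\otimes 2},(\nabla_x+\nabla_y)^2M_t\big) = \big((\nabla_x+\nabla_y)^2u_t^{\otimes 2},M_t\big)$, and since $(\nabla_x+\nabla_y)u_t(x)u_t(y) = \nabla u_t(x)u_t(y) + u_t(x)\nabla u_t(y)$, a second application gives $(\nabla_x+\nabla_y)^2u_t^{\otimes 2} = \Delta u_t\otimes u_t + u_t\otimes\Delta u_t + 2\nabla u_t\otimes\nabla u_t = 2u_t\st\Delta u_t + 2\nabla u_t\otimes\nabla u_t$. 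Substituting, the $2(u_t\st\Delta u_t,M_t)$ terms cancel and we are left with $\partial_t(u^{\otimes 2}_t,M_t) = -2(\nabla u_t\otimes\nabla u_t,M_t) + 2(u_t\st\mathrm{div}F(u_t),M_t)$, which is \eqref{TensorInnerProduct}. The main obstacle I anticipate is not the formula-chasing but the bookkeeping needed to legitimately apply Lemma \ref{ItoInnerProduct}: one must verify that the regularity hypotheses there (the factor $v=M$ lying in $L^\infty(H^3)$, with $M^{\flat}\in C_2^{2\alpha}(H^2)$, $M^{\natural}\in C_2^{3\alpha}(H^0)$, $\dot\nu\in L^\infty(H^2)$) are met by Proposition \ref{TensorMeasureChangeProposition}---which is why the $C^6_b$ assumption on $\beta_j$ and the improved $H^4$ bound on $M$ were recorded there---and then to carry the adjoint operators through correctly so that the sign in front of $\Gamma^{1,*}$ in the $M$-equation produces the exact cancellation.
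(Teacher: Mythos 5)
Your proposal follows essentially the same route as the paper: its proof likewise applies the product formula of Lemma \ref{ItoInnerProduct} to $u^{\otimes 2}$ (satisfying \eqref{tensorEq}) and $M$ (satisfying \eqref{TensorMeasureChangeURD}), relies on the duality of the drivers to cancel the rough terms, discards $(u^{\otimes 2},M)^{\natural}$ because it is an additive increment bounded by $|t-s|^{\zeta^*}$ with $\zeta^*>1$, and the drift identification via integration by parts is exactly your computation. The cancellation of $\bar{\Gamma}^2$ that you hedge on is likewise left implicit in the paper (it is built into the backward form of the $M$-equation), so your argument is at the same level of detail as the published proof.
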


\begin{proof}
From Proposition \ref{ItoInnerProduct} we get
\begin{equation} \label{TensorInnerProductWithRemainder}
\delta (u^{\otimes 2} , M)_{st}   =- \int_s^t 2 (\nabla u_r \otimes \nabla u_r, M_r) + 2( u_r \st \mathrm{div} F(u_r), M_r) dr +  (u^{\otimes 2}, M)^{\natural}_{st} 
\end{equation}
where 
$$
|(u^{\otimes 2}, M)^{\natural}_{st}  | \lesssim |t-s|^{\zeta^*}
$$
for some $\zeta^* > 1$. Since the other terms in \eqref{TensorInnerProductWithRemainder} are increments in time, so is $(u^{\otimes 2}, M)^{\natural}_{st} $ which implies that it is zero. The result follows.
\end{proof}

\begin{remark}
It is possible to justify \eqref{TensorInnerProduct} under the assumption that $\beta_j \in W^{3, \infty}(\R^d)$. This would however require that we introduce a much bigger machinery, and it is not the aim of this paper.
\end{remark}

\bigskip

Similarly we have the following.

\begin{proposition} \label{MeasureChange}
Assume $\beta_i \in C^6_b(\R^d)$. Then the backward equation \eqref{mFunction} 
has a unique solution in $C^4_b(\R^d)$ given by
\begin{equation} \label{mFunctionFeynmanKac}
m_t(x) = E\Big[  \exp \Big\{ \int_t^T \mathrm{div} \beta_j(\phi_{t,s}(x)) d\tilde{\Z}_s^j  \Big\} \Big] .
\end{equation}
Moreover, 
$$
0  < \inf_{r,x} m_r(x) \leq \sup_{r,x} m_r(x) < \infty
$$ 
uniformly on bounded sets in $\mathscr{C}_g^{\alpha}$.
\end{proposition}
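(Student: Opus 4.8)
The plan is to deduce existence, uniqueness, the regularity statement and the representation formula \eqref{mFunctionFeynmanKac} from the rough Feynman--Kac theory already quoted, and then to devote the real work to the two-sided bound. Indeed \eqref{mFunction} is exactly \eqref{SmoothKolmogorov} in the special case $\sigma_i\equiv\sqrt{2}\,e_i$ (so that $\tfrac12\sigma_i^k\sigma_j^k\partial_i\partial_j=\Delta$) with terminal datum $v_T\equiv1$, and under this identification \eqref{RoughFeynmanKac} becomes \eqref{mFunctionFeynmanKac}. So I would invoke \cite{FNS} (building on \cite{DFS}) to get a solution $m$ of \eqref{mFunction}, the formula \eqref{mFunctionFeynmanKac}, and uniqueness in the natural class; the improvement to $m_t\in C^4_b(\R^d)$ is then bookkeeping, since the flow $\phi_{t,s}(\cdot)$ of \eqref{RoughSDE} inherits the regularity of the vector fields up to the two derivatives lost in the rough integral, and in \eqref{mFunctionFeynmanKac} the $x$-dependence enters only through $\phi_{t,s}(x)$ composed with $\mathrm{div}\,\beta_j\in C^5_b(\R^d)$ inside the exponential.

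For the bounds, observe that, $\sigma$ being constant, the linearised flow $D_x\phi_{t,s}$ solves $d(D_x\phi_{t,s})=D\beta_j(\phi_{t,s})\,D_x\phi_{t,s}\,dZ^j_s$, the Brownian directions dropping out; since $\Z$ is geometric the chain rule then gives $d\big(\log\det D_x\phi_{t,s}\big)=\mathrm{div}\,\beta_j(\phi_{t,s})\,dZ^j_s$ with value $0$ at $s=t$, so the exponential in \eqref{mFunctionFeynmanKac} is precisely the Jacobian determinant of the flow and $m_t(x)=E[\det D_x\phi_{t,T}(x)]>0$. Write $Y_{t,T}(x):=\int_t^T\mathrm{div}\,\beta_j(\phi_{t,s}(x))\,dZ^j_s=\log\det D_x\phi_{t,T}(x)$. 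The lower bound is then easy: by Jensen, $m_t(x)\ge\exp\!\big(E\,Y_{t,T}(x)\big)$, and $E\,Y_{t,T}(x)\ge-E|Y_{t,T}(x)|$ is bounded below by a finite constant because all polynomial moments of the (Brownian) rough-path seminorms involved are finite; this constant depends only on $\|\beta\|_{C^6_b}$, $[\delta Z]_\alpha$, $[\ZZ]_{2\alpha}$, $T$, $d$, and polynomially on the first three, so $\inf_{r,x}m_r(x)>0$ uniformly on bounded subsets of $\mathscr C^\alpha_g$.

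The main obstacle is the upper bound $\sup_{t,x}E[\exp Y_{t,T}(x)]<\infty$. One estimates the rough integral $Y_{t,T}(x)$ by the sewing lemma over a partition of $[t,T]$ into $N$ equal subintervals, noting that $\mathrm{div}\,\beta_j(\phi_{t,s})$ is a controlled path with respect to the joint lift of $(B,Z)$, which — as observed above — never involves $\mathbb B$, only the cross integral $\int\delta B\,dZ$, an element of the first Wiener chaos with Gaussian tails. The ``Riemann-sum'' part of the sewing expansion, $\sum_i\big[\mathrm{div}\,\beta_j(\phi_{t,t_i})\,\delta Z^j_{t_it_{i+1}}+(\ast)\,\ZZ_{t_it_{i+1}}+(\ast)\!\int_{t_i}^{t_{i+1}}\!\delta B\,dZ^j\big]$, is, for fixed $N$, bounded by a finite deterministic quantity plus a term linear in $[\int\delta B\,dZ]_{2\alpha}$, hence with all exponential moments; the sewing remainders carry the total weight $\sum_i|t_{i+1}-t_i|^{3\alpha}\simeq N^{1-3\alpha}$, which tends to $0$ as $N\to\infty$ precisely because $\alpha>1/3$, times a functional at worst quadratic in $[\delta B]_\alpha$ and $[\int\delta B\,dZ]_{2\alpha}$. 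Taking $N$ large enough that the coefficient of this quadratic term falls below the Fernique threshold for exponential integrability of the square of a Gaussian-tailed variable, a Cauchy--Schwarz split yields $\sup_{t,x}E[\exp Y_{t,T}(x)]<\infty$, with all constants polynomial in $[\delta Z]_\alpha$ and $[\ZZ]_{2\alpha}$; together with the lower bound this proves the statement. The technical crux hidden here is that the Hölder seminorms of $\phi$ entering the controlled-path norm of $\mathrm{div}\,\beta_j(\phi)$ grow at most linearly in $[\delta B]_\alpha$ (so that the said functional is quadratic, not of higher degree); this rests on a local-in-time fixed-point estimate for the flow of \eqref{RoughSDE}, again using $\alpha>1/3$ and $\beta_j\in C^6_b(\R^d)$, and can be extracted from \cite{FNS}.
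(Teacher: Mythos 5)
Your first paragraph and your lower bound are essentially fine: the paper likewise gets existence, uniqueness in $C^4_b$ and the representation \eqref{mFunctionFeynmanKac} by citing \cite{DFS} (and \cite{FNS}), and for the lower bound it uses $1=E[1]^2\le E[e^{Y}]E[e^{-Y}]$ to reduce to an upper bound for the reversed-sign equation, whereas your Jensen argument $m_t(x)\ge\exp(E Y_{t,T}(x))$ is a legitimate alternative, since $E|Y_{t,T}(x)|$ only requires \emph{polynomial} moments of the pathwise bound and those are available.

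The genuine gap is in your upper bound, precisely at the point you yourself call the crux. You need the controlled/H\"older seminorms of the flow $\phi$ on the intervals $[t_i,t_{i+1}]$ of a \emph{deterministic} mesh $T/N$ to be at most linear in $[\delta B]_\alpha$, so that the sewing remainder is at most quadratic in the Gaussian-tailed quantities and Fernique applies after shrinking the coefficient via $N^{1-3\alpha}$. But the fixed-point estimate for $dX=\sigma\,dB+\beta_j(X)\,dZ^j$ closes linearly only on intervals that are small \emph{depending on the random seminorms themselves}: the self-bound has the form $x\le A+\epsilon x^2$ with $A\gtrsim [\delta B]_\alpha$ and $\epsilon$ proportional to a power of the mesh, and it yields $x\lesssim A$ only on the event $4A\epsilon\le 1$. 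With a deterministic mesh the resulting pathwise bound is of degree roughly $1/\alpha>2$ in $[\delta B]_\alpha$ and the cross-area norm (this is the usual $\|\mathbf X\|_\alpha\vee\|\mathbf X\|_\alpha^{1/\alpha}$ growth for RDEs with bounded vector fields), and $\exp$ of an arbitrarily small multiple of a degree-$>2$ polynomial of Gaussian-tailed variables is never integrable, so the ``choose $N$ large, then Cauchy--Schwarz + Fernique'' step collapses. The exponential integrability of $\sup_{t,x}E[\exp Y_{t,T}(x)]$ is exactly the nontrivial theorem of \cite{DOR} (resting on Cass--Litterer--Lyons/Friz--Riedel type tail estimates for the greedy-partition count, cf. \cite{FR13}, or on the first-chaos structure exploited there), and this is what the paper invokes; it cannot be obtained by a local-in-time fixed-point estimate, nor ``extracted from \cite{FNS}'', which concerns the PDE side. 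To repair your proof, replace the partition/Fernique sketch by a citation of \cite{DOR} (as the paper does), applied both to $E[e^{Y}]$ and, if you keep your Jensen route optionalized, to nothing more; the paper additionally applies it to $E[e^{-Y}]$ for its lower bound.
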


\begin{proof}
The upper bound on the representation \eqref{mFunctionFeynmanKac} was first proved in \cite{DOR}. Its relationship to the equation \eqref{mFunction} was shown in \cite{DFS} and it is in fact the unique solution in $C_b^4(\R^d)$. 

To see also the lower bound on the solution, consider the following computations
\begin{align*}
1 &  = E[1]^2  =  E\Big[ \exp \Big\{ \frac{1}{2} \int_r^T \mathrm{div} \beta_j(\phi_{r,s}(x)) d\tilde{\Z}_s^j \Big\} \exp \Big\{  - \frac{1}{2} \int_r^T \mathrm{div} \beta_j(\phi_{r,s}(x)) d\tilde{\Z}_s^j \Big\}\Big]^2 
\\
 & \leq  E\Big[ \exp \Big\{  \int_r^T \mathrm{div} \beta_j(\phi_{r,s}(x)) d\tilde{\Z}_s^j \Big\} \Big] E\Big[ \exp \Big\{ - \int_r^T \mathrm{div} \beta_j(\phi_{r,s}(x)) d\tilde{\Z}_s^j \Big\} \Big],
\end{align*}
so the lower bound on $m$ will follow if we can show the upper bound on 
$$
\tilde{m}_r(x) := E\Big[ \exp \Big\{ - \int_r^T \mathrm{div} \beta_j(\phi_{r,s}(x)) d\tilde{\Z}_s^j \Big\} \Big] .
$$
The latter is in turn the solution to 
$$
\partial_t\tilde{m}_t = - \Delta \tilde{m}_t +  \big(\beta_j \nabla \tilde{m}_t - \mathrm{div} \beta_j  \tilde{m}_t \big)\dot{Z}_t^j, \quad  \tilde{m}_T = 1 .
$$
The result follows.
\end{proof}

We now choose $M^N$ to be the solution to \eqref{TensorMeasureChangeEquation} with final condition $M_T^N(x,y) := \sum_{n \leq N} e_n(x) e_n(y)$, where $ \{e_n\}_{n \geq 1}$ is an orthonormal basis of $L^2(\R^d)$ such that $e_n \in H^6$ (e.g. the Hermite functions). The solution is given by
$$
M_t^N(x,y)  =  \sum_{n \leq N} E\Big[ e_n(\phi_{t,T}(x)) e_n(\phi_{t,T}(y))   \exp \Big\{ \int_t^T \mathrm{div} \beta_j(\phi_{t,s}(x)) + \mathrm{div} \beta_j(\phi_{t,s}(y)) d\tilde{\Z}_s^j \Big\} \Big] .
$$

Clearly, for the final time $T$ we have for any $f \in L^2(\R^d)$
\begin{align*}
\int_{\R^d \times \R^d} f(x)f(y) M^N_T(x,y) dx dy & = \sum_{ n \leq N } \left| \int_{\R^d} f(x) e_n(x) dx \right|^2 \rightarrow \sum_{n \geq 1} |(f,e_n)|^2 = |f|_0^2,
\end{align*}
as $N \rightarrow \infty$.

We now show that a similar result holds in the space weighted by $m_t$.

\begin{lemma}
For any $f,g \in L^2(\R^d)$ we have 
$$
\lim_{N \rightarrow \infty} \iint_{\R^d \times \R^d} f(x)g(y) M^N_t(x,y) dx dy = \int_{\R^d} f(x)g(x) m_t(x) dx
$$
\end{lemma}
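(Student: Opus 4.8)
The plan is to use the Feynman--Kac representations for both $M^N_t$ and $m_t$ together with the spectral decomposition of the final data, and to pass to the limit $N\to\infty$ by means of the orthonormal basis. The key observation is that, by the representation \eqref{TensorMeasureChangeSolution} with final datum $M_T^N(x,y)=\sum_{n\leq N}e_n(x)e_n(y)$, the quantity $\iint f(x)g(y)M^N_t(x,y)\,dx\,dy$ can be rewritten, after using the tower property / Fubini to bring the spatial integrals inside the expectation, as
\[
\sum_{n\leq N} E\Big[ \Big(\int_{\R^d} f(x)\,e_n(\phi_{t,T}(x))\,e^{\int_t^T \mathrm{div}\beta_j(\phi_{t,s}(x))\,d\tilde\Z_s^j}\,dx\Big)\Big(\int_{\R^d} g(y)\,e_n(\phi_{t,T}(y))\,e^{\int_t^T \mathrm{div}\beta_j(\phi_{t,s}(y))\,d\tilde\Z_s^j}\,dy\Big)\Big].
\]
Freezing a Brownian path $\omega$, this is a finite sum $\sum_{n\leq N}(F^\omega, e_n)(G^\omega, e_n)$ where $F^\omega(z):=f(z)$ transported by the (inverse of the) flow $\phi_{t,T}$ against the exponential weight — more precisely $F^\omega$ is the function whose pairing against $e_n$ produces the inner integral above — and similarly $G^\omega$. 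Hence for fixed $\omega$ the $N$-sum converges, by Parseval in $L^2(\R^d)$, to $(F^\omega,G^\omega)_{L^2}$, which by the change of variables $x\mapsto\phi_{t,T}(x)$ (the flow of \eqref{RoughSDE} is a diffeomorphism with controlled Jacobian) equals $\int f(x)g(x)$ times the product of the two exponential weights along the \emph{same} trajectory started at $x$; i.e.\ $\int_{\R^d} f(x)g(x)\exp\{2\int_t^T\mathrm{div}\beta_j(\phi_{t,s}(x))\,d\tilde\Z_s^j\}\,dx$. Wait — one must be careful: the two exponentials are evaluated along flows started at $x$ and $y$ respectively, and the diagonalization forces $x=y$, collapsing the weight to $\exp\{2\int_t^T\ldots\}$, which is \emph{not} $m_t$; the correct reading is that after taking the $L^2$-pairing in the $n$-variable one is left with $\int f(x)g(x)$ against the \emph{squared} single-trajectory weight, but then the outer expectation $E[\cdot]$ does not factor as $m_t\cdot m_t$. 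This is exactly the subtlety the degenerate operator in \eqref{TensorMeasureChangeEquation} encodes.

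The correct route, and the one I would take, is therefore \emph{not} to diagonalize first and hope the weight factors, but rather: (i) use dominated convergence in $N$ under the expectation — the summand is bounded by $|f|_0|g|_0$ times the product of the two exponential weights, which is integrable uniformly in $N$ by the $L^p$ moment bounds on $\exp\{\int_t^T\mathrm{div}\beta_j(\phi_{t,s}(x))d\tilde\Z_s^j\}$ established in \cite{DOR,FNS} (these give the bounds in Proposition \ref{MeasureChange} and Proposition \ref{TensorMeasureChangeProposition}); (ii) for fixed $\omega$, apply Parseval to get $\lim_N \sum_{n\leq N}(F^\omega,e_n)(G^\omega,e_n)=(F^\omega,G^\omega)_{L^2(\R^d)}$ where, tracking definitions, $F^\omega(z)=f(\phi_{t,T}^{-1}(z))\,J_{t,T}^\omega(z)\,e^{\int_t^T\mathrm{div}\beta_j(\phi_{t,s}(\phi_{t,T}^{-1}(z)))d\tilde\Z_s^j}$ with $J$ the Jacobian of the inverse flow, and similarly $G^\omega$; (iii) change variables back, so $(F^\omega,G^\omega)_{L^2}=\int_{\R^d} f(x)g(x)\,e^{2\int_t^T\mathrm{div}\beta_j(\phi_{t,s}(x))d\tilde\Z_s^j}\,dx$; (iv) now take the expectation and use Fubini once more to write the limit as $\int_{\R^d}f(x)g(x)\,E[e^{2\int_t^T\mathrm{div}\beta_j(\phi_{t,s}(x))d\tilde\Z_s^j}]\,dx$. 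For this to equal $\int f(x)g(x)m_t(x)\,dx$ one needs $E[e^{2\int\ldots}]=m_t$, which is false in general — so the genuine argument must instead exploit that $m_t$ defined by \eqref{mFunctionFeynmanKac} is a \emph{single} exponential and that the doubling in the tensor weight is compensated by the Brownian driving noise $\sigma\equiv 1$ appearing in \eqref{SmoothSDE}: the second-order part of \eqref{TensorMeasureChangeEquation} is $(\nabla_x+\nabla_y)^2$, i.e.\ the generator of $(x+\sqrt2\,B, y+\sqrt2\,B)$ with the \emph{same} $B$ in both coordinates, so the two flows $\phi_{t,s}(x)$ and $\phi_{t,s}(y)$ are driven by a common Brownian motion; when $x=y$ they coincide and the diagonal weight is genuinely $\exp\{2\int_t^T\mathrm{div}\beta_j(\phi_{t,s}(x))d\tilde\Z_s^j\}$, but the key point is that at time $t$ the object we actually approximate is $\langle u_t^{\otimes2},M_t^N\rangle$ not $(u_t,u_t)_{m_t}$ naively, and the Lemma's RHS $\int fg\,m_t$ must be re-derived by comparing the two backward PDEs \eqref{mFunction} and \eqref{TensorMeasureChangeEquation} restricted to the diagonal.

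Given these considerations, the cleanest correct proof is the PDE/uniqueness one rather than the probabilistic change-of-variables one: define $\mu^N_t(x):=M^N_t(x,x)$ (the restriction to the diagonal, which makes sense since $M^N_t\in H^4(\R^d\times\R^d)\hookrightarrow C(\R^d\times\R^d)$ by Sobolev embedding in $2d$ dimensions — one may need $H^6$ data as assumed, giving $H^4$ and hence $C^{\lfloor 4-d/2\rfloor}$ regularity in the relevant range, or argue by the explicit formula). One checks from \eqref{TensorMeasureChangeEquation} that $(\nabla_x+\nabla_y)^2$ acts on functions of the form $h(x,x)$ as $\Delta$, and the first-order transport terms combine on the diagonal to $2\,\mathrm{div}(\beta_j\,\cdot)$, so $\mu^N$ solves, weakly, $\partial_t\mu^N_t=-\Delta\mu^N_t+2\,\mathrm{div}(\beta_j\mu^N_t)\dot Z^j$ — but this is \emph{twice} the transport rate of \eqref{mFunction}, confirming once more that $\mu^N_t\not\to m_t$. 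I now realize the statement is about $\iint fg\,M^N_t\to\int fg\,m_t$ with $M^N_t$ evaluated off the diagonal and integrated against $f\otimes g$, and the previous displayed computation (before the Lemma) already did the $t=T$ case where $M^N_T$ is literally the partial sum of the spectral resolution of the identity, so $\iint f(x)g(x)M^N_T=\sum_{n\leq N}(f,e_n)(g,e_n)\to(f,g)=\int fg$ and $m_T\equiv1$; the content of the Lemma is to propagate this from $T$ back to $t$. Therefore the proof I propose is: (a) use the Feynman--Kac formula \eqref{TensorMeasureChangeSolution} to write $\iint f(x)g(y)M^N_t(x,y)\,dx\,dy=E\big[\sum_{n\leq N}(f\,w^x_t,e_n)(g\,w^y_t,e_n)\big]$ where $w^x_t(\cdot)$ encodes transport by $\phi_{t,T}$ and the exponential weight, recognizing that by the independence/common-noise structure the vector $(f\,w^x_t)$ and $(g\,w^y_t)$ are, conditionally on $B$, deterministic $L^2$ functions; (b) for fixed $B$, dominated convergence + Parseval give the limit $E[(f\,w_t, g\,w_t)_{L^2}]$ where both weights now sit on the same variable (the diagonalization), $=\int_{\R^d}f(x)g(x)\,E[(\mathrm{Jacobian})\,e^{\int_t^T(\mathrm{div}\beta_j(\phi_{t,s}(\cdot))+\mathrm{div}\beta_j(\phi_{t,s}(\cdot)))d\tilde\Z_s}]$ evaluated after change of variables; (c) the change of variables $x\mapsto\phi_{t,T}(x)$ has Jacobian whose expectation against the exponential collapses, by the very definition \eqref{mFunctionFeynmanKac} of $m$ and the flow property, to a single factor $m_t(x)$ — this is the step where one uses that the Brownian part is $\sigma\equiv 1$ \emph{and} that pushing forward Lebesgue measure under $\phi_{t,T}^{-1}$ produces exactly the weight $e^{-\int_t^T\mathrm{div}\beta_j d\tilde\Z}$ (Liouville's formula / the divergence appearing as the log-Jacobian), which cancels one of the two exponentials and leaves $E[e^{\int_t^T\mathrm{div}\beta_j(\phi_{t,s}(x))d\tilde\Z_s^j}]=m_t(x)$. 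The \textbf{main obstacle} is precisely step (c): justifying the change-of-variables formula with the Jacobian of the rough flow $\phi_{t,T}$ and identifying its logarithm with $-\int_t^T\mathrm{div}\beta_j\,d\tilde\Z_s^j$ in the rough-path sense, together with the uniform integrability needed to exchange $E$, $\lim_N$, and the $x$-integral; the requisite flow regularity and moment bounds are exactly what \cite{FNS} provides, so once those are invoked the rest is Parseval and Fubini.
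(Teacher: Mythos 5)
Your final plan (a)--(c) is essentially the paper's proof: expand the final datum $M_T^N=\sum_{n\le N}e_n\otimes e_n$, use the Feynman--Kac representation \eqref{TensorMeasureChangeSolution} to factor the double integral pathwise into $\sum_{n\le N}(F^\omega,e_n)(G^\omega,e_n)$, apply Parseval, change variables via the rough Liouville formula, and take expectations. The detours in your writeup, however, contain a concrete bookkeeping error that you flag but never cleanly resolve: the claim that the pathwise Parseval limit equals $\int f(x)g(x)\exp\{2\int_t^T\mathrm{div}\,\beta_j(\phi_{t,s}(x))\,d\tilde{\Z}_s^j\}\,dx$ comes from dropping the Jacobians. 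The correct accounting is that each factor is, after the substitution $z=\phi_{t,T}(x)$, simply $f\circ\phi_{t,T}^{-1}$ with \emph{no} weight at all, because the Jacobian $|\det\nabla\phi_{t,T}^{-1}(z)|=\exp\{-\int_t^T\mathrm{div}\,\beta_j\,d\tilde{\Z}_s^j\}$ exactly cancels the exponential produced by Feynman--Kac; Parseval then gives $|f\circ\phi_{t,T}^{-1}|_0\,|g\circ\phi_{t,T}^{-1}|_0$-type pairings, and changing variables \emph{back} reintroduces exactly one exponential, whose expectation is $m_t(x)$ by the very definition \eqref{mFunctionFeynmanKac}. In particular neither the ``common Brownian noise compensating the doubled weight'' nor a comparison of the two backward PDEs on the diagonal plays any role, and the diagonal restriction $\mu^N_t(x)=M^N_t(x,x)$ is indeed a dead end, as you noticed. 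Two further small points of comparison: the paper first reduces to $f=g$ by polarization, so the Parseval sum is increasing and the expectation passes by monotone convergence, whereas your dominated-convergence route (Cauchy--Schwarz on the tail plus the moment bounds on the exponential) handles general $f,g$ directly at the cost of invoking those bounds; and the integrability of $f\circ\phi_{t,T}^{-1}$, which you implicitly use to apply Parseval, is itself proved in the paper by the same Liouville change of variables, so that step should be stated explicitly rather than absorbed into ``the requisite flow regularity.''
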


\begin{proof}
We prove the lemma for $f = g$, the general result follows from the parallelogram law.
Begin by noticing that for all $f \in L^2(\R^d)$ we have $P$-a.s. that $f \circ \phi_{t,s}^{-1} \in L^2(\R^d)$. Indeed
\begin{multline*}
E\Big[ \int_{\R^d} | f \circ \phi_{t,s}^{-1}(x) |^2 dx \Big] 
=E\Big[ \int_{\R^d} | f(y) |^2 |\nabla  \phi_{t,s}(y) | dy \Big] 
\\
\leq  |f|_0^2  \sup_{x \in \R^d} E \Big[ \exp \Big\{ \int_t^s \mathrm{div} \beta_j( \phi_{s,r}(x) ) d\tilde{\Z}_r^j \Big\} \Big],
\end{multline*}
where we have used the change of variables $y = \phi_{t,s}(x)$ and the ``rough version'' of the Liouville lemma (see \cite{DFS}, Lemma 25),
$$
| \nabla  \phi_{t,s}(y)| = \exp \Big\{ \int_t^s \mathrm{div} \beta_j( \phi_{s,r}(y) ) d\tilde{\Z}_r^j \Big\} .
$$
The latter expression has finite expectation.

For a set of full $P$-measure we get the convergence
\begin{align*}
\sum\nolimits_{n \leq N} &\int_{\R^d \times \R^d} 
f(x) f(y) e_n(\phi_{t,T}(x)) e_n(\phi_{t,T}(y))
\\
&\quad \quad \quad \quad 
\times\exp \Big\{ \int_t^T \mathrm{div} \beta_j(\phi_{t,s}(x)) + \mathrm{div} \beta_j(\phi_{t,s}(y)) d\tilde{\Z}_s^j \Big\} dx dy
\\
 & = \sum\nolimits_{n \leq N} \left| \int_{\R^d  } f(x)  e_n(\phi_{t,T}(x))    \exp \Big\{ \int_t^T \mathrm{div} \beta_j(\phi_{t,s}(x)) d\tilde{\Z}_s^j \Big\} dx \right|^2 \\
 & = \sum\nolimits_{n \leq N} \left| \int_{\R^d  } f(\phi_{t,T}^{-1}(z))  e_n(z) dz \right|^2 
 \\
&\quad \quad \quad \quad \quad 
\underset{N\to\infty}{\longrightarrow} | f \circ \phi_{t,T}^{-1} |_0^2 = \int_{\R^d} |f(x)|^2  \exp \Big\{ \int_t^T \mathrm{div} \beta_j(\phi_{t,s}(x)) d\tilde{\Z}_s^j \Big\} dx .
 \end{align*}
The above convergence is monotone, and so taking the expectation yields exactly the claimed result.
\end{proof}

\subsection{The proof of Theorem \ref{EnergyEqualityTheorem}}

We have the following weighted energy equality
\begin{equation} \label{EnergyEquality}
 (u_t^2, m_t) + \int_0^t 2( |\nabla u_r|^2, m_r) dr = (u_0^2,m_0) - 2 \int_0^t ( F(u_r) , \nabla( u_r m_r)) dr .
\end{equation}
Indeed, consider \eqref{TensorInnerProduct} for $M_T^N(x,y) = \sum_{n \leq N} e_n(x) e_n(y)$;
\begin{equation} \label{ApproximateEnergyEquality}
(u_T^{\otimes 2}, M_T^N) + 2 \int_0^T (\nabla u_t \otimes \nabla u_t, M_t^N) dt = (u_0^{\otimes 2}, M_0^N) + 2 \int_0^T (u_t \st \mathrm{div}F(u_t), M_t^N) dt.
\end{equation}
Letting $N \rightarrow \infty$ we see that 
$$
(u_t^{\otimes 2}, M_t^N) \rightarrow (u_t^2, m_t).
$$
For any $t$ such that $\nabla u_t \in L^2(\R^d)$ we also get that 
\begin{align*}
(\nabla u_t \otimes \nabla u_t, M_t^N) \rightarrow (|\nabla u_t|^2, m_t)
\intertext{ and }
2 ( u_t \st \mathrm{div} F(u_t) , M_t^N) \rightarrow - 2 (F(u_t),\nabla(u_t m_t)),
\end{align*}
and we may now use dominated convergence to conclude that \eqref{MeasureChangedEnergyInequality} holds.

Now, from Proposition \ref{MeasureChange} and \eqref{EnergyEquality} we get 
\begin{equation} \label{MeasureChangedEnergyInequality}
|u_t|_0^2 + \int_0^t  |\nabla u_r|_0^2 dr  \lesssim |u_0|_0^2 +   \int_0^t ( F(u_r) , \nabla( u_r m_r)) dr  .
\end{equation}

\bigskip

We can now proceed to the proof of \eqref{FBoundedEnergy}.

We use the bound
\begin{multline*}
\int_0^t ( F(u_r) , \nabla( u_r m_r)) dr =  - \int_0^t ( \nabla F(u_r) u_r, u_r m_r) dr
\\
\leq  |\nabla F |_{\infty} \|m\|_{L^{\infty}([0,T] \times \R^d)} \int_0^t |u_r|_0^2dr  . 
\end{multline*}
From Gronwall Lemma, 
we have then
$$
|u_t|_0^2 + \int_0^t  |\nabla u_r|_0^2 dr  \lesssim |u_0|_0^2 \exp \left\{  t |\nabla F |_{\infty} \|m\|_{L^{\infty}([0,T] \times R^d)} \right\},
$$
and the result follows.

\bigskip

Finally, we prove \eqref{BurgersBoundedEnergy} for the classical Burgers non-linearity.

We rewrite
\begin{align*}
\int_0^t \frac{1}{2} ( u_r^2 , \nabla( u_r m_r)) dr = - \int_0^t \frac{1}{3} (u^3_r, \nabla m_r) dr \lesssim \| \nabla m \|_{\infty} \int_0^t \|u_r\|_{L^3}^3 dr .
\end{align*}

Recall the Gagliardo-Nirenberg inequalities 
$$
\| \phi \|_{L^3} \lesssim | \nabla \phi |_{0}^{1/6} | \phi |_{0}^{5/6} .
$$
Using Young's inequality $ab \leq \epsilon a^p + c_{\epsilon} b^{p'}$ with $p = 4$ and $p' = \frac{4}{3}$ gives

\begin{align*}
|u_t|_0^2 + \int_0^t  |\nabla u_r|_0^2 dr  &  \lesssim |u_0|_0^2 +  \| \nabla m\|_{\infty}  \int_0^t |\nabla u_r|_0^{1/2} |u_r|_0^{5/2} dr \\
&  \lesssim |u_0|_0^2 +  \| \nabla m\|_{\infty}  \epsilon \int_0^t |\nabla u_r|_0^{2}dr +  \| \nabla m\|_{\infty}  c_{\epsilon} \int_0^t |u_r|_0^{2q} dr,
\end{align*}
where $q := \frac{5}{3}$. If $\epsilon$ is small enough, we get
\begin{align*}
|u_t|_0^2 + \int_0^t  |\nabla u_r|_0^2 dr  &  \lesssim |u_0|_0^2 +  \| \nabla m\|_{\infty}  c_{\epsilon} \int_0^t |u_r|_0^{2q} dr .
\end{align*}

From the Bihari-LaSalle inequality (Theorem \ref{BihariTheorem}) we find that for all $t \leq ( \| \nabla m\|_{\infty}  c_{\epsilon} |u_0|_0^2)^{-1}$ we get
$$
|u_t|_0^2 \leq \frac{  |u_0|_0^2}{\left( 1 - (q-1) \| \nabla m\|_{\infty}  c_\epsilon  |u_0|_0^2 t \right)^{1/(q-1)}  } ,
$$
proving the theorem.
\hfill$\qed$

\section{Proof of Theorem \ref{MainTheorem}}
\subsection{Existence} \label{section:ProofMainTheoremExistence}

In this section we prove existence of a solution by taking smooth approximations of the rough path. It is well known that weak limits are not preserved by non-linear operations which hints that we need a strong compactness criterion. Moreover, there is the additional difficulty that we need to take a pointwise limit in time to show convergence of the rough integral. The latter convergence, however, can be taken weak in space since the rough integral is a linear operation of the solution. 

Now, we introduce a localization argument,
which is needed because of the lack of compactness in the embedding $H^1\hookrightarrow L^2(\R^d)$ on the whole space.
If $u$ is a solution to \eqref{MainEqURD}, for any open and bounded $U \subset \R^d$ define $\bar{u} = \tau_U^* u$ where $\tau_U : H^1_0(U) \rightarrow H^1(\R^d)$ is the extension operator given by $\tau_U f = f$ on $U$ and $\tau_U f = 0$ on $U^c$. 
Since $\tau_U$ is continuous, it is clear that $\bar{u}$ is $\alpha^*$-continuous with values in $(H^{1}_0(U))^*$ where $\alpha^*$ is as in Lemma \ref{aprioriLemma}. Moreover, since $u_t$ is actually in $L^2(\R^d)$ we have that $\bar{u}_t(x) = u_t(x)$ for almost all $x \in U$. Furthermore, for any smooth $g$ with compact support in $U$ we have
$$
(\bar{u}, \partial_j g) = (u, \partial_j g) = -( \partial_j u, g)
$$
so that $\nabla \bar{u}_t(x) = \nabla u_t(x)$ for almost all $x \in U$ and $ t \in [0,T]$. This gives that 
\begin{align*}
\sup_{ t \in [0,T]} |\bar{u}_t|_{L^2(U)}^2 + \int_0^T |\nabla \bar{u}_r |_{L^2(U)}^2 dr &  =\sup_{t \in [0,T]} |u_t|_{L^2(U)}^2 + \int_0^T |\nabla u_r |^2_{L^2(U)} dr \\
 & \leq \sup_{ t \in [0,T]} |u_t|^2_{L^2(\R^d)} + \int_0^T |\nabla u_r |^2_{L^2(\R^d)} dr ,
\quad \quad 
\end{align*}
which shows that the restriction $\bar{u}$ is in $L^{\infty}([0,T]; L^2(U)) \cap L^{2}([0,T]; H^1(U)) \cap C^{\alpha^*}([0,T]; (H^{1}_0(U))^*)$.

\bigskip

We now proceed to construct the solution via approximations of the rough path. 

Since $\Z$ is assumed to be geometric we know that there exists a sequence of smooth paths $Z(n)$ such that $\Z(n) \rightarrow \Z$ in the rough path metric. Denote by $u^n$ the solution of the corresponding equation, i.e.\
\begin{equation} \label{EqSmoothPath}
\partial_t u^n_t = \Delta u_t^n + \mathrm{div}F(u_t^n) + \beta_j \nabla u_t^n \dot{Z}^j_t(n) .
\end{equation}
By a solution to this equation we mean a mapping $u^n :[0,T] \rightarrow L^2(\R^d)$ such that $u^n :[0,T] \rightarrow H^{-2}$ is differentiable and \eqref{EqSmoothPath} is satisfied in $H^{-2}$.

We define the unbounded rough drivers
$$
A^1_{st}(n)\phi : = \beta_j \nabla \phi \delta Z_{st}^j(n) , \hspace{.5cm} A^2_{st}(n)\phi : = \beta_i \nabla  (\beta_j \nabla \phi ) \ZZ_{st}^{j,i}(n) ,
$$
and notice that we have
$$
\| A^i(n) - A^i \|_{ C_2^{i \alpha}([0,T];\mathcal{L}(H^k ; H^{k-i})) } \rightarrow 0 
$$
as $n \rightarrow \infty$.

We now show that a classical solution of the above equation also satisfies \eqref{MainEqURD}.

\begin{lemma}\label{Existence}
If $u^n : [0,T] \rightarrow L^2(\R^d)$ such that \eqref{EqSmoothPath} holds, then  
\begin{equation} \label{approxURD}
\delta u_{st}^n = \int_s^t \Delta u_r^n + \mathrm{div}F(u_r^n) dr + (A_{st}^1(n)  + A_{st}^2(n) )u_s^n + u_{st}^{n, \natural} ,
\end{equation}
and there exists constants $C$,$\alpha^* \in (0,1)$, $\zeta > 1$ and a final time $T_0$ independent of $n$ such that 
\begin{equation} \label{UniformRemainder}
| u_{st}^{n, \natural} |_{-3} \leq C|t-s|^{\zeta} ,
\end{equation}
and 
\begin{equation} \label{UniformHolder}
| \delta u_{st}^{n} |_{-1} \leq C|t-s|^{\alpha^*} ,
\end{equation}
for all $s,t \in [0,T_0]$.

\end{lemma}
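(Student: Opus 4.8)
The plan is to iterate the smooth equation \eqref{EqSmoothPath} into itself exactly as in the derivation of \eqref{expansion}, thereby identifying the remainder $u^{n,\natural}$ explicitly and checking its time regularity. First I would set $\mu^n_t := \int_0^t [\Delta u_r^n + \mathrm{div}F(u_r^n)]\,dr$ and note that, since $u^n$ is a classical solution, $\mu^n$ is Lipschitz from $[0,T]$ to $H^{-2}$ with a Lipschitz constant controlled by $\sup_{t}|u^n_t|_0$ via the estimates \eqref{estim:F1}--\eqref{estim:F2} (in the Burgers case one uses the quadratic bound, which forces the eventual restriction to a finite time $T_0$). Integrating \eqref{EqSmoothPath} over $[s,t]$ gives $\delta u^n_{st} = \delta\mu^n_{st} + \int_s^t \beta_j\nabla u^n_r\,dZ^j_r(n)$, and substituting the same expansion for $u^n_r$ inside the last integral produces $\delta u^n_{st} = \delta\mu^n_{st} + A^1_{st}(n)u^n_s + A^2_{st}(n)u^n_s + u^{n,\natural}_{st}$ with $u^{n,\natural}$ given by the three iterated-integral terms in \eqref{ExplicitRemainder} (with $Z(n)$ in place of $Z$). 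This establishes \eqref{approxURD} and shows $u^n$ is a solution in the sense of Definition \ref{def:solutionWeak} with drift $\mu^n$ and unbounded rough driver $(A^1(n),A^2(n))$.

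Next I would observe that $u^{n,\natural}$ has high time regularity directly from its explicit form: the triple-integral terms contribute a factor $|t-s|^{3\alpha}$ (via the rough-path bounds on $Z(n),\ZZ(n)$, which are uniformly bounded in $n$ since $\Z(n)\to\Z$) and the double integral with one $dr$ contributes $|t-s|^{1+\alpha}$; since $\alpha>1/3$ both exceed $|t-s|^1$. Crucially, all constants here involve only $[\delta Z(n)]_\alpha$, $[\ZZ(n)]_{2\alpha}$, $\|\beta\|_{C^6_b}$ and $\sup_{t\in[0,T_0]}|u^n_t|_0$, all of which are $n$-uniform — the rough-path norms because the sequence converges, the $L^2$-norm because of the energy bounds from Theorem \ref{EnergyEqualityTheorem} applied to the classical solution $u^n$ (which is legitimate once we know $u^n$ is a finite-energy solution, i.e.\ after this very lemma; the clean way is to note the energy estimate is itself derived for classical $Z$, so one may invoke the smooth-path version of \eqref{ClassicalEnergyInequality}/\eqref{BurgersBoundedEnergy} with constants depending only on the rough-path norms, which are uniformly bounded). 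This yields \eqref{UniformRemainder} on a common interval $[0,T_0]$, with $T_0=T$ in the Lipschitz case and $T_0$ as in Theorem \ref{EnergyEqualityTheorem} in the Burgers case.

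Finally, \eqref{UniformHolder} follows by feeding the now-established uniform bounds into Lemma \ref{aprioriLemma}: that lemma gives $|\delta u^n_{st}|_{-1}\lesssim(|u^n|_{L^\infty([s,t];H^0)}+[\mu^n]_{\mathrm{Lip}}+[A(n)]_\alpha+[u^{n,\natural}]_\zeta)|t-s|^{\alpha^*}$, and every term on the right is bounded uniformly in $n$ by the preceding discussion. One then covers $[0,T_0]$ by finitely many subintervals of length $L$ (the constant from Lemma \ref{aprioriLemma}) to upgrade the local Hölder bound to a bound on all of $\Delta(T_0)$.

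The main obstacle is the circularity around the uniform-in-$n$ $L^2$ bound: the energy estimate in Theorem \ref{EnergyEqualityTheorem} is stated for finite-energy solutions in the sense of Definition \ref{def:solution}, yet here we are still in the process of showing a classical solution \emph{is} such a solution. The resolution is that for smooth $Z(n)$ one has the genuinely classical energy identity \eqref{ClassicalEnergyEquality}–\eqref{ClassicalEnergyInequality} (or its Burgers analogue via $(u\partial_x u,u)=0$), whose right-hand side depends on $\int_0^t|\dot Z^j_r(n)|\,dr$; since $Z(n)\to Z$ only in $C^\alpha$, this integral is \emph{not} uniformly bounded, so one must instead rerun the weighted-energy argument of Section \ref{section:EnergyEstimates} at the smooth level — there the relevant constant is $\|m^n\|_\infty$ (and $\|\nabla m^n\|_\infty$ for Burgers), which by Proposition \ref{MeasureChange} is bounded uniformly on bounded sets of $\mathscr C_g^\alpha$, hence $n$-uniformly. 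Making this step precise, and checking that all the interpolation/smoothing-operator estimates behind \eqref{SmoothPart}–\eqref{nonSmoothPart} pass through with $n$-independent constants, is where the real work lies; the algebraic identification of $u^{n,\natural}$ and the Hölder bookkeeping are routine.
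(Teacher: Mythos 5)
Your overall architecture matches the paper's: identify the remainder through the iterated expansion \eqref{ExplicitRemainder}, establish \eqref{approxURD}, obtain $n$-uniform energy bounds by rerunning the weighted-energy argument at the smooth level (your resolution of the apparent circularity via Proposition \ref{MeasureChange} and the uniform bound on $\|m^n\|_\infty$ is exactly right and is how the paper implicitly justifies invoking Theorem \ref{EnergyEqualityTheorem} for the classical solutions), and deduce \eqref{UniformHolder} from Lemma \ref{aprioriLemma}. However, there is a genuine gap in your derivation of \eqref{UniformRemainder}. You claim the uniform bound $|t-s|^{3\alpha}$ (resp.\ $|t-s|^{1+\alpha}$) can be read off ``directly from the explicit form'' of $u^{n,\natural}$ with constants involving only $[\delta Z(n)]_\alpha$ and $[\ZZ(n)]_{2\alpha}$. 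This is not true as stated: the iterated integrals in \eqref{ExplicitRemainder} contain the unknown $u^n_{r_2}$ (and $\Delta u^n_{r_1}+\mathrm{div}F(u^n_{r_1})$) \emph{inside} the innermost integration, so a direct estimate produces factors of $\int_s^t|\dot Z_r(n)|\,dr$, i.e.\ the Lipschitz norm of $Z(n)$, which blows up as $n\to\infty$ since the limit is only $\alpha$-H\"older. Uniformity in $n$ would require expanding $u^n_{r_2}$ around $u^n_s$ yet again and controlling the resulting remainders, which is precisely a sewing-lemma argument, not a pointwise bound on the explicit formula.

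The paper handles exactly this point differently: the direct estimates \eqref{SmoothDriverRemainderEstimate1}--\eqref{SmoothDriverRemainderEstimate2} drawn from \eqref{approxURD} and \eqref{ExplicitRemainder} are explicitly flagged as \emph{not} uniform in $n$ (they use $|t-s|^3$ with a constant involving the smoothness of $Z(n)$), and they serve only the qualitative purpose of verifying that $u^{n,\natural}$ lies in $C^{1+}_2([0,T];H^{-3})$ so that the a priori machinery applies. The uniform quantitative bound \eqref{UniformRemainder} is then obtained from Theorem \ref{aprioriEstimate}, whose output depends only on $[\mu^n]_{\mathrm{Lip}}$, $[A(n)]_\alpha$ and $|u^n|_{L^\infty H^0}$ --- all $n$-uniform by the energy estimate. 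Your closing remark about the estimates behind \eqref{SmoothPart}--\eqref{nonSmoothPart} suggests you sensed this, but as written your second paragraph bypasses Theorem \ref{aprioriEstimate} for \eqref{UniformRemainder}, and that step does not close without it.
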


\begin{proof}
Recall the discussion in section \ref{RoughPaths} and the two expressions for $u^{n, \natural}_{st}$ given by \eqref{approxURD} and \eqref{ExplicitRemainder}. We do the decomposition $\phi = J^{\eta} \phi + (I - J^{\eta}) \phi$ for any $\phi \in H^3$. From \eqref{approxURD} we see
\begin{align}
|(u^{n, \natural}_{st}, (I - J^{\eta}) \phi) | & \lesssim  |t-s| |(I - J^{\eta}) \phi|_2 +  |t-s| |(I - J^{\eta}) \phi|_1 + |t-s|^2 |(I - J^{\eta}) \phi|_2  \notag \\
 & \lesssim (  |t-s| \eta  +  |t-s|^2 \eta )|\phi|_3 \label{SmoothDriverRemainderEstimate1}
\end{align}
and from \eqref{ExplicitRemainder} we find 
\begin{align}
|(u^{n, \natural}_{st}, J^{\eta} \phi) | & \lesssim  |t-s|^3 |J^{\eta} \phi|_3 + |t-s|^3 |J^{\eta} \phi|_4 \notag \\
 & \lesssim |t-s|^3 \eta^{-1} | \phi|_3 \label{SmoothDriverRemainderEstimate2} .
\end{align}
Letting $\eta = |t-s|$ it follows that $u^{n ,\natural}$ is a remainder in $H^{-3}$.

The reader should notice that the inequalities \eqref{SmoothDriverRemainderEstimate1} and \eqref{SmoothDriverRemainderEstimate2} are \emph{not} uniform in $n$, and in fact they use the smoothness of $Z(n)$.

From Theorem \ref{EnergyEqualityTheorem} we get that there exists $T_0,C> 0$, independent of $n$ such that 
\begin{equation} \label{UniformEnergyEstimate}
\sup_{t \in [0,T_0]} |u_t^n |_0^2 + \int_0^{T_0} |\nabla u_t^n |_0^2 dt \leq C.
\end{equation}

Defining $\mu_t^n := \int_0^t \Delta u_r^n + \mathrm{div} F(u_r^n) dr$ we get 
\begin{equation} \label{UniformDriftEstimate}
[\mu^n]_{\mathrm{Lip}} \leq C,
\end{equation}
so that from Theorem \ref{aprioriEstimate} we get \eqref{UniformRemainder} and from Lemma \ref{aprioriLemma} we get \eqref{UniformHolder}. 
\end{proof}

Choose a sequence of open and bounded sets $U_m \subset U_{m+1}$ such that $\bigcup_m U_m  = \R^d$. For any $m$, the restriction of $u^n$ to $U_m$ is bounded in $C^{\alpha^*}([0,T_0]; H^{-1}(U_m)) \cap L^2([0,T_0]; H^1(U_m))$. Since $H^1(U_m)$ is compactly embedded into $L^2(U_m)$, it follows that $u^n$ is compact in $L^2([0,T_0]; L^2(U_m))$ and $C([0,T_0]; L^{2}(U_m)_w)$ from Lemma \ref{CompactnessLemma} and Lemma \ref{lem:weakc}.

We now find a subsequence which converges on compacts in the strong topology.

\begin{theorem} \label{ExistenceTheorem}
There exists a subsequence, $\{ u^{n_k} \}_{k \geq 1}$ of $\{ u^{n} \}_{n \geq 1}$ and an element $u \in L^2([0,T_0]; H^1(\R^d)) \cap C^{\alpha^*}([0,T_0]; H^{-1}(\R^d))$, such that for any compact $K \subset \R^d$ we have
$$
u^{n_k} \rightarrow u \hspace{.5cm} in \hspace{.5cm} L^2([0,T_0]; L^2(K)) \cap C([0,T_0]; L^{2}(K)_w) . 
$$
Moreover, the limit solves \eqref{MainEq}, in the sense of Definition \ref{def:solution}.
\end{theorem}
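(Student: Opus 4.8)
The plan is to extract a convergent subsequence using the compactness already established, and then to pass to the limit in the decomposition \eqref{approxURD}, checking that each term converges to the corresponding term of \eqref{MainEqURD} and that the limiting remainder still satisfies the bound \eqref{RemainderCondition}. First I would produce the limit by a diagonal argument: for $U_1$ the family $\{u^n\}$ restricted to $U_1$ is relatively compact in $L^2([0,T_0];L^2(U_1))\cap C([0,T_0];L^2(U_1)_w)$, so pass to a subsequence converging there; restricting that subsequence to $U_2$ and repeating, then diagonalizing over $m$, yields a single subsequence $\{u^{n_k}\}$ and a limit $u$ that converges on every $L^2([0,T_0];L^2(K))\cap C([0,T_0];L^2(K)_w)$ for $K$ compact. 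The uniform bounds \eqref{UniformEnergyEstimate} and \eqref{UniformHolder} are preserved in the limit (weak lower semicontinuity of the norms), so $u\in L^2([0,T_0];H^1(\R^d))\cap L^\infty([0,T_0];L^2(\R^d))\cap C^{\alpha^*}([0,T_0];H^{-1}(\R^d))$, which gives item (i) of Definition \ref{def:solution}.

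Next I would fix $s<t$ in $[0,T_0]$ and a test function $\phi\in H^3$ with compact support (these are dense in $H^3$), and pass to the limit in
$$
(\delta u^{n_k}_{st},\phi)=\int_s^t \big(\Delta u^{n_k}_r+\mathrm{div}F(u^{n_k}_r),\phi\big)\,dr+\big(u^{n_k}_s,(A^1_{st}(n_k)+A^2_{st}(n_k))^*\phi\big)+(u^{n_k,\natural}_{st},\phi).
$$
The left side converges by the $C([0,T_0];L^2(K)_w)$ convergence. For the drift, $(\Delta u^{n_k}_r,\phi)=(u^{n_k}_r,\Delta\phi)\to(u_r,\Delta\phi)$ for a.e.\ $r$ (weak-in-space convergence) with a uniform $L^2_r$ bound from \eqref{UniformEnergyEstimate}, so dominated convergence applies; for $\mathrm{div}F(u^{n_k}_r)=-(F(u^{n_k}_r),\nabla\phi)$ one uses the \emph{strong} $L^2(K)$ convergence on $K=\mathrm{supp}\,\phi$ together with the Lipschitz (or, in the Burgers case, the interpolation) bound on $F$ from \eqref{estim:F1}--\eqref{estim:F2}. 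For the rough-driver term one combines $(A^i_{st}(n_k))^*\phi\to (A^i_{st})^*\phi$ in $H^1$ (from $\|A^i(n_k)-A^i\|_{C_2^{i\alpha}(\mathcal L(H^k;H^{k-i}))}\to 0$) with $u^{n_k}_s\to u_s$ weakly in $L^2$. Therefore $(u^{n_k,\natural}_{st},\phi)$ converges, and its limit is exactly $(u^\natural_{st},\phi)$, where $u^\natural$ is defined implicitly by \eqref{MainEqURD}; by the uniform bound \eqref{UniformRemainder} the limit inherits $|(u^\natural_{st},\phi)|\le C|t-s|^\zeta|\phi|_3$, first for compactly supported $\phi$ and then for all $\phi\in H^3$ by density. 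This establishes item (ii), so $u$ is a finite-energy solution to \eqref{MainEq}.

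I expect the main obstacle to be the convergence of the nonlinear drift term $\mathrm{div}F(u^{n_k})$: this is precisely where strong compactness (rather than mere weak convergence) is indispensable, and it must be handled on the compact set $K=\mathrm{supp}\,\phi$, using that the localized family is relatively compact in $L^2([0,T_0];L^2(K))$. In the classical Burgers case $F(u)=-\tfrac12 u^2$ there is the extra subtlety that $u^2$ must be shown to converge in $L^1_{\mathrm{loc}}$ along the subsequence, which follows from strong $L^2$ convergence plus the uniform $L^\infty_tL^2_x$ bound via Cauchy--Schwarz; one should also keep track that $T_0$ here is the time horizon provided by Theorem \ref{EnergyEqualityTheorem} and is independent of $n$. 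A minor technical point worth flagging is that $u$ is a priori only weakly continuous in $L^2$, which is exactly item (i) and all that is required; no attempt to upgrade to strong continuity in $L^2(\R^d)$ is needed.
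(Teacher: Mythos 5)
Your proposal is correct and follows essentially the same route as the paper: extract a subsequence converging strongly/weakly on compacts (your diagonal argument replaces the paper's identification of the local limits with the global weak limit, to the same effect), pass to the limit term by term in \eqref{approxURD} using strong $L^2(K)$ convergence for the nonlinearity and operator-norm convergence of $A^i(n_k)$ for the rough terms, and let the remainder inherit the uniform bound from Theorem \ref{aprioriEstimate}. Your treatment of the Burgers nonlinearity in divergence form (via $L^1_{\mathrm{loc}}$ convergence of $u^2$) is a harmless variant of the paper's trilinear splitting of $u^n\partial_x u^n - u\partial_x u$.
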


\begin{proof}[Proof of Theorem \ref{ExistenceTheorem} when $F$ is Lipshitz]
From \eqref{UniformEnergyEstimate} we may choose a subsequence $\{ u^{n_k} \}_{k \geq 1}$ of $\{ u^{n} \}_{n \geq 1}$ converging to some $u$ in the weak topology induced by $L^{\infty}([0,T_0]; L^2(\R^d)) \cap L^2([0,T_0]; H^1)$. From Lemma \ref{CompactnessLemma}, for every $m$ there exists a further subsequence $\{ u^{n_{k,m,j}} \}_{j \geq 1}$ of $\{ u^{n_k} \}_{k \geq 1}$ converging to some element $u^{(m)}$ in $L^2([0,T_0]; L^2(U_m)) \cap C([0,T_0]; L^{2}(U_m)_w)$. 

By uniqueness of weak and strong limits it is clear that $u$ and $u^{(m)}$ must coincide on $U_m$, which also gives that the full sequence $\{ u^{n_k} \}_{k \geq 1}$ is converging in $L^2([0,T_0]; L^2(U_m)) \cap C([0,T_0]; L^{2}(U_m)_w)$. The convergence statement of the theorem on compacts is now evident.

We are now ready to show that all the terms in the expansion \eqref{approxURD} converge. Fix $K$ compact in $\R^d$ and $\phi \in C^{\infty}(K)$. We get
\begin{itemize}

\item
$(u^{n_k}, \phi) \rightarrow (u, \phi)$ in $C([0,T_0])$

\item
$ (\Delta u^{n_k}, \phi)  \rightarrow  (\Delta u, \phi) $ in $C([0,T_0])$

\item
$(u^{n_k}, A_{}^{i,*}(n_k)\phi) \rightarrow (u, A_{}^{i,*}\phi)$ in $C_2([0,T_0])$

\item
$ \left|\int_0^t (F( u_r^{n_k}), \nabla \phi) dr  - \int_0^t (F( u_r), \nabla \phi) dr \right|  \leq \int_0^t |F( u_r^{n_k}) - F( u_r)|_{L^2(K)}  |\nabla \phi|_{H^1(K)} dr $ $ \rightarrow 0 . 
$
\end{itemize}

Since all the other terms in \eqref{approxURD} converge, also $(u_{st}^{n_k, \natural},\phi)$ must converge to some limit $(u_{st}^{\natural}, \phi)$. From \eqref{UniformEnergyEstimate}, \eqref{UniformDriftEstimate} and Theorem \ref{aprioriEstimate} we see that 
$$
|(u_{st}^{n_k, \natural},\phi)| \leq C |t-s|^{\zeta^*} |\phi|_{3}
$$
where $C$ is independent of $k$, so that the same estimate holds for $u_{}^{\natural}$.	Since $K$ was arbitrary, the result follows.
\end{proof}

We now proceed to the proof concerning the classical Burgers nonlinearity.
\begin{proof}[Proof of Theorem \ref{ExistenceTheorem} when $d=1$ and $F(u)=- \frac{1}{2}u^2$]
The only difference with the above proof is the convergence of the non-linearity.
But for any $r \in [0,T_0]$ such that $u_r^n, u_r \in H^1,$ we have
\begin{align*}
\big| ( u_r^n &\partial_xu_r^n - u_r \partial_x u_r, \phi) \big|
\\
& \leq \left| ( (u_r^n  - u_r) \partial_xu_r^n, \phi) \right| + \left| ( u_r^n -  u_r, \partial_x u_r  \phi) \right|
+ \left| ( u_r^n -  u_r,  u_r \partial_x \phi) \right| 
\\
& \leq  |u_r^n  - u_r|_0 | \partial_xu_r^n| |\phi|_{0, \infty}  + |  u_r^n -  u_r |_0 |\partial_x u_r|_0  |\phi|_{0, \infty}
+ |  u_r^n -  u_r |_0 |u_r|_0 |\partial_x \phi |_{0, \infty} 
\\
& \lesssim \big( |u^n_r  - u_r|_0 | \partial_xu_r^n|  + |  u_r^n -  u_r |_0 |\partial_x u_r|_0
+ |  u_r^n -  u_r |_0 |u_r|_0 \big) |\phi|_{1, \infty} .
\end{align*}
Integrating over $r$ and sending $n$ to infinity gives the result. 
\end{proof}

\subsection{Uniqueness} \label{section:ProofMainTheoremUniqueness}

We now prove uniqueness in the class $C([0,T]; L^2(\R^d)) \cap L^2([0,T]; H^1)$. The steps are very similar to the one's in the previous section. Assume $u^{(1)}$ and $u^{(2)}$ are two solutions to \eqref{MainEq} and define $v := u^{(1)} - u^{(2)}$. We then get
$$
\delta v_{st} = \int_s^t \big[\Delta v_r + \mathrm{div}( F(u_r^{(1)}) - F(u_r^{(2)})) \big]dr + A_{st}^1 v_s + A_{st}^2 v_s + v_{st}^{\natural}
$$
where we have defined  
$$
v_{st}^{\natural} := u_{st}^{1,\natural} - u_{st}^{2,\natural} . 
$$

Since the linear terms in the above equation have the same structure as in the previous section, we may use the same weight function, $m$, as in Proposition \ref{MeasureChange}.

\begin{lemma} \label{SquareEquation}
Suppose $\Z$ is a geometric rough path and $\beta_j \in C_b^{6}(\R^d)$ for all $j$. Then we have the following weighted energy equality
\begin{equation} \label{EnergyForDifference}
 (v_t^2, m_t) + \int_0^t 2( |\nabla v_r|^2, m_r) dr = (v_0^2,m_0) - 2 \int_0^t ( F(u^1_r) - F(u^2_r) , \nabla( v_r m_r)) dr .
\end{equation}
\end{lemma}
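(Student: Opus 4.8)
The plan is to reproduce, for the difference $v=u^{(1)}-u^{(2)}$, the argument that justifies the weighted energy equality \eqref{EnergyEquality} in Section \ref{section:EnergyEstimates}; the only genuinely new ingredient is the treatment of the drift coming from $F(u^{(1)})-F(u^{(2)})$. First I would record that $v$ solves \eqref{AbstractEquation} in the sense of Definition \ref{def:solutionWeak}, with the \emph{same} rough driver $(A^1,A^2)$ as in \eqref{MainEqURD} — the transport perturbation $\beta_j\nabla v\,\dot Z^j$ being linear — and with drift $\mu^v_t:=\int_0^t[\Delta v_r+\mathrm{div}(F(u^{(1)}_r)-F(u^{(2)}_r))]\,dr$. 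This drift is Lipschitz as a path with values in $H^{-2}$: when $\|\nabla F\|_\infty<\infty$ this follows from \eqref{estim:F1} and the boundedness of $v$ in $L^2$, while for the Burgers nonlinearity one writes $F(u^{(1)})-F(u^{(2)})=-\tfrac12(u^{(1)}+u^{(2)})v$ and uses \eqref{estim:F2} together with the boundedness of $u^{(1)},u^{(2)}$ in $L^\infty([0,T];L^2)$.

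Next I would apply Proposition \ref{ItoTensorProduct} with both factors equal to $v$ (equivalently, the Corollary following it, with $u$ replaced by $v$), obtaining that $v^{\otimes 2}(x,y)=v(x)v(y)$ satisfies, on the scale $H^n(\R^d\times\R^d)$,
\[
\delta v^{\otimes 2}_{st}=\int_s^t 2\, v_r \st \big[\Delta v_r+\mathrm{div}(F(u^{(1)}_r)-F(u^{(2)}_r))\big]\,dr+(\Gamma^1_{st}+\Gamma^2_{st})v^{\otimes 2}_s+v^{\otimes 2,\natural}_{st},
\]
with $\Gamma^1,\Gamma^2$ as in that Corollary and $|(v^{\otimes 2,\natural}_{st},\Phi)|\lesssim|t-s|^{\zeta}|\Phi|_{H^3(\R^d\times\R^d)}$ for some $\zeta>1$. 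I would then test this against $M^N_t$, the solution of \eqref{TensorMeasureChangeEquation} with terminal datum $M^N_T=\sum_{n\le N}e_n\otimes e_n$. Since $M^N\in C([0,T];H^4(\R^d\times\R^d))$ by Proposition \ref{TensorMeasureChangeProposition}, the product formula of Lemma \ref{ItoInnerProduct} applies, and — exactly as in the proof of \eqref{TensorInnerProduct}, the noise contributions cancelling by the very construction of $M^N$ — yields
\[
\delta(v^{\otimes 2},M^N)_{st}=-\int_s^t\big[2(\nabla v_r\otimes\nabla v_r,M^N_r)+2(v_r\st\mathrm{div}(F(u^{(1)}_r)-F(u^{(2)}_r)),M^N_r)\big]dr+(v^{\otimes 2},M^N)^\natural_{st},
\]
with $|(v^{\otimes 2},M^N)^\natural_{st}|\lesssim|t-s|^{\zeta^*}$ for some $\zeta^*>1$. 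Since every other term on the right is a genuine increment in $t$, so is $(v^{\otimes 2},M^N)^\natural$, and being simultaneously $o(|t-s|)$ it must vanish; integrating over $[0,t]$ gives the approximate weighted equality with $M^N$ in place of $m$.

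Finally I would pass to the limit $N\to\infty$, using the approximation lemma established above (that $\iint f(x)g(y)M^N_t(x,y)\,dx\,dy\to\int f(x)g(x)m_t(x)\,dx$) to get $(v^{\otimes 2}_t,M^N_t)\to(v_t^2,m_t)$, and, for a.e.\ $t$ (those with $\nabla v_t\in L^2$), $(\nabla v_t\otimes\nabla v_t,M^N_t)\to(|\nabla v_t|^2,m_t)$ together with $2(v_t\st\mathrm{div}(F(u^{(1)}_t)-F(u^{(2)}_t)),M^N_t)\to-2(F(u^{(1)}_t)-F(u^{(2)}_t),\nabla(v_tm_t))$. The bounds $0<\inf m\le\sup(|m|+|\nabla m|)<\infty$ from Proposition \ref{MeasureChange}, together with $u^{(1)},u^{(2)}\in L^\infty_tL^2_x\cap L^2_tH^1_x$ and the estimates \eqref{estim:F1} resp.\ \eqref{estim:F2}, furnish an $L^1([0,t])$ dominating function, so dominated convergence applies and delivers \eqref{EnergyForDifference}. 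The main obstacle is precisely this drift term: one must simultaneously verify the Lipschitz-into-$H^{-2}$ property needed to invoke Proposition \ref{ItoTensorProduct} and produce the $L^1$-in-time domination for the convergence of $2(v_r\st\mathrm{div}(F(u^{(1)}_r)-F(u^{(2)}_r)),M^N_r)$ to $-2(F(u^{(1)}_r)-F(u^{(2)}_r),\nabla(v_rm_r))$; in the Burgers case both hinge on the factorization $F(u^{(1)})-F(u^{(2)})=-\tfrac12(u^{(1)}+u^{(2)})v$ and interpolation inequalities of the type used to obtain \eqref{estim:F2}.
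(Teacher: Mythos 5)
Your proposal is correct and follows essentially the argument the paper intends: it observes that $v=u^{(1)}-u^{(2)}$ satisfies the abstract equation with the same driver $(A^1,A^2)$, the same weight functions $m$ and $M^N$ from Proposition \ref{MeasureChange} and Proposition \ref{TensorMeasureChangeProposition}, and then repeats verbatim the tensorization, product formula, vanishing-remainder and $N\to\infty$ steps of Section \ref{section:EnergyEstimates}, the only new point being the Lipschitz-into-$H^{-2}$ drift bound via \eqref{estim:F1} resp.\ \eqref{estim:F2}. The paper leaves exactly these steps implicit ("the steps are very similar to the ones in the previous section"), and your reconstruction, including the treatment of the difference drift, fills them in faithfully.
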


We consider again two different cases.

\begin{theorem}\label{ContinuityWithFTheorem}
Assume $F$ has a bounded derivative. Then we have the following estimate
\begin{equation} \label{ContinuityWithFEquation}
\sup_{ 0 \leq t \leq T}|u_t^{(1)} - u_t^{(2)}|^2_0 + \int_0^T |\nabla u_r^{(1)} - \nabla u_r^{(2)} |_0^2 dr \lesssim |u_0^{(1)} - u_0^{(2)}|^2_0 .
\end{equation}
In particular,  uniqueness holds in the space $C([0,T]; H^0) \cap L^2([0,T]; H^1)$ for any $T>0,$ which amounts to say that finite-energy solutions are unique.
\end{theorem}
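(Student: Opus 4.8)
The plan is to reproduce, for the difference $v:=u^{(1)}-u^{(2)}$, the weighted energy argument used to prove \eqref{FBoundedEnergy} in Theorem \ref{EnergyEqualityTheorem}, with the nonlinear term handled via the Lipschitz bound on $F$ instead of by an integration by parts. First I would check that $v$ fits the abstract setting: with $\mu_t:=\int_0^t[\Delta v_r+\mathrm{div}(F(u^{(1)}_r)-F(u^{(2)}_r))]\,dr$, estimate \eqref{estim:F1} together with the boundedness of $u^{(1)},u^{(2)}$ in $L^2(\R^d)$ shows that $\mu$ is Lipschitz into $H^{-2}$, while $v^{\natural}=u^{(1),\natural}-u^{(2),\natural}$ inherits \eqref{RemainderCondition} by linearity; thus $v$ solves \eqref{AbstractEquation} and Lemma \ref{SquareEquation} yields the weighted energy equality \eqref{EnergyForDifference}. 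Invoking Proposition \ref{MeasureChange}, which furnishes constants $0<c_0:=\inf_{r,x}m_r(x)\le\sup_{r,x}m_r(x)=:c_1<\infty$ and the finiteness of $\|\nabla m\|_{L^\infty([0,T]\times\R^d)}$, the left side of \eqref{EnergyForDifference} is bounded below by $c_0\big(|v_t|_0^2+2\int_0^t|\nabla v_r|_0^2\,dr\big)$ and the initial term above by $c_1|v_0|_0^2$.

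Next I would estimate the nonlinear term: since $F$ is Lipschitz, $|F(u^{(1)}_r(x))-F(u^{(2)}_r(x))|\le|\nabla F|_{\infty}|v_r(x)|$, so writing $\nabla(v_rm_r)=m_r\nabla v_r+v_r\nabla m_r$ and using Cauchy--Schwarz,
\[
\big|\big(F(u^{(1)}_r)-F(u^{(2)}_r),\nabla(v_rm_r)\big)\big|\le|\nabla F|_{\infty}\big(c_1\,|v_r|_0|\nabla v_r|_0+\|\nabla m\|_{\infty}\,|v_r|_0^2\big).
\]
A Young inequality turns $|\nabla F|_{\infty}c_1|v_r|_0|\nabla v_r|_0$ into $\tfrac{c_0}{2}|\nabla v_r|_0^2+C|v_r|_0^2$; inserting this into \eqref{EnergyForDifference} and absorbing the gradient contribution into the left-hand side gives
\[
c_0|v_t|_0^2+c_0\int_0^t|\nabla v_r|_0^2\,dr\le c_1|v_0|_0^2+C\int_0^t|v_r|_0^2\,dr,
\]
with $C$ depending only on $|\nabla F|_{\infty}$, $c_0$, $c_1$ and $\|\nabla m\|_{\infty}$. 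Discarding the nonnegative gradient integral and applying Gronwall's lemma yields $|v_t|_0^2\lesssim|v_0|_0^2e^{Ct/c_0}$ uniformly on $[0,T]$, and feeding this bound back into the previous display controls $\int_0^T|\nabla v_r|_0^2\,dr$ by a multiple of $|v_0|_0^2$ as well. This is \eqref{ContinuityWithFEquation}; taking $u^{(1)}_0=u^{(2)}_0$ forces $v\equiv0$, hence uniqueness of finite-energy solutions on $[0,T]$.

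I do not anticipate a serious obstacle here: the genuinely delicate inputs --- making sense of the tensorized/squared equation and constructing the Lyapunov weight $m$ bounded away from $0$ and $\infty$ --- are already supplied by Section \ref{section:EnergyEstimates}. The single point deserving care is that the cross term $|v_r|_0|\nabla v_r|_0$ can be absorbed \emph{only} because $\inf_{r,x}m_r>0$, i.e.\ the weighted dissipation $2(|\nabla v_r|^2,m_r)$ is genuinely coercive; this is exactly the lower bound proved in Proposition \ref{MeasureChange}, without which the transport-induced contribution could not be dominated. It is also worth recording that all constants above depend only on $|\nabla F|_{\infty}$, $\|\beta\|_{C^6_b}$, the rough-path norms $[\delta Z]_\alpha,[\ZZ]_{2\alpha}$ (through $m$) and $T$, but not on the particular solutions $u^{(1)},u^{(2)}$.
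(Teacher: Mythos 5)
Your proposal is correct and follows essentially the same route as the paper: the weighted energy identity \eqref{EnergyForDifference} from Lemma \ref{SquareEquation}, the two-sided bounds on $m$ (and boundedness of $\nabla m$) from Proposition \ref{MeasureChange} to pass between weighted and unweighted norms, the Lipschitz bound on $F$ together with $|\nabla(v_r m_r)|_0\lesssim |v_r|_0+|\nabla v_r|_0$, Young's inequality to absorb the gradient term, and Gronwall. The only addition is your explicit check that $v$ fits the abstract framework, which the paper leaves implicit; otherwise the arguments coincide.
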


\begin{proof}
From Proposition \ref{MeasureChange} we have that 
\begin{align}
\label{bounds:u_m}
|\nabla(vm)|_0 \lesssim |v|_0 + |\nabla v|_0
\intertext{and} 
\label{bounds:u_m_2}
\int_0^t |\nabla v_r|_0^2 dr \lesssim \int_0^t (|\nabla v_r|_0^2, m_r) dr\,,
\end{align}
which gives 
\begin{align*}
|v_t|^2_0  + \int_0^t  |\nabla v_r|^2_0 dr & \lesssim (v_t^2, m_t) + \int_0^t 2( |\nabla v_r|^2, m_r) dr
\\
 & = (v_0^2,m_0) - 2 \int_0^t ( F(u^{(1)}_r) - F(u^{(2)}_r) , \nabla( v_r m_r)) dr
\\
 & \lesssim |v_0|^2_0  +  \int_0^t | F(u^{(1)}_r) - F(u^{(2)}_r)|_0 | \nabla( v_r m_r) |_0 dr
\\
 & \lesssim |v_0|^2_0  +  |\nabla F|_{\infty} \int_0^t | v_r|_0 (|v_r|_0 + |\nabla v_r|_0) dr
\\
 & \lesssim |v_0|^2_0  +  \epsilon |\nabla F|_{\infty} \int_0^t | \nabla v_r|_0^2 dr + C_{\epsilon}  |\nabla F|_{\infty} \int_0^t |v_r|_0^2   dr
\end{align*}
where we have used Young Inequality in the last step. Now choosing $\epsilon$ small enough, we get
\begin{align*}
|v_t|^2_0  + \int_0^t  |\nabla v_r|^2_0 dr & \lesssim |v_0^2|_0  + C_{\epsilon}  |\nabla F|_{\infty} \int_0^t |v_r|_0^2dr    .
\end{align*}
The result follows from Gronwall Lemma.
\end{proof}

Similarly, we get uniqueness of the finite-energy solutions of the classical Burgers equation.

\begin{theorem} \label{ContinuityBurgersTheorem}
The classical Burgers equation admits the following local Lipschitz estimate
\begin{multline*}
\sup_{0 \leq t \leq T}|u_t^{(1)} - u_t^{(2)}|^2_0  + \int_0^T  |\partial _x(u_r^{(1)} - u_r^{(2)})|^2_0 dr 
\\
\lesssim |u_0^{(1)} - u_0^{(2)}|^2_0 \exp\Big\{ C \int_0^T  ( |\partial _xu_r^{(1)} |_0 + |u^{(2)}_r|_0 + |\partial _x u_r^{(2)}|_0)^{4/3}, dr \Big\}
\end{multline*}
for every $T$ such that $u^{(i)}$ is a solution on $[0,T]$ for $i=1,2,$ in the sense of Definition \ref{def:solution}.

In particular, finite-energy solutions are unique.
\end{theorem}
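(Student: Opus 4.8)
The plan is to repeat, for the difference $v:=u^{(1)}-u^{(2)}$, exactly the weighted energy computation that produced \eqref{BurgersBoundedEnergy}, the only new ingredient being that one factor of $v$ in the nonlinear term may be traded against the a priori $L^\infty_t L^2_x$-bound of the two solutions; this converts the super-linear dependence that forced the Bihari-LaSalle inequality in the a priori estimate into a linear one, so that an ordinary Gronwall argument closes the estimate.

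First I would invoke Lemma \ref{SquareEquation}: since $u^{(1)},u^{(2)}$ are finite-energy solutions on $[0,T]$ and $\Z$ is geometric, $v$ satisfies the weighted energy equality \eqref{EnergyForDifference}. Combining it with Proposition \ref{MeasureChange} --- by which $m$ is bounded and bounded away from $0$ uniformly in $(r,x)$ while $\partial_x m$ is bounded --- and with the inequality $\int_0^t|\partial_x v_r|_0^2\,dr\lesssim\int_0^t(|\partial_x v_r|^2,m_r)\,dr$ of the type \eqref{bounds:u_m_2}, one obtains
\[
|v_t|_0^2+\int_0^t|\partial_x v_r|_0^2\,dr\ \lesssim\ |v_0|_0^2+\Big|\int_0^t\big(F(u^{(1)}_r)-F(u^{(2)}_r),\,\partial_x(v_r m_r)\big)\,dr\Big|,
\]
the implicit constants depending only on $\|\beta\|_{C^6_b}$, $[\delta Z]_\alpha$, $[\ZZ]_{2\alpha}$ through $\|m\|_\infty$, $\|\partial_x m\|_\infty$ and $\inf m$.

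Next I would estimate the nonlinear term. As $F(u)=-\tfrac12u^2$ we have $F(u^{(1)})-F(u^{(2)})=-\tfrac12\big((u^{(1)})^2-(u^{(2)})^2\big)=-\tfrac12\,v\,(u^{(1)}+u^{(2)})$, whence $-2\big(F(u^{(1)})-F(u^{(2)}),\partial_x(vm)\big)=\big(v(u^{(1)}+u^{(2)}),\partial_x(vm)\big)$. Writing $\partial_x(v_r m_r)=m_r\partial_x v_r+v_r\partial_x m_r$ and integrating by parts in the space variable (legitimate since all functions at play lie in $H^1(\R)$), the quantity $\big(v(u^{(1)}+u^{(2)}),\partial_x(vm)\big)$ becomes
\[
-\tfrac12\big(m\,\partial_x(u^{(1)}+u^{(2)}),\,v^2\big)+\tfrac12\big((u^{(1)}+u^{(2)})\,\partial_x m,\,v^2\big).
\]
For each of these terms I would pull out $\|m\|_\infty$, resp.\ $\|\partial_x m\|_\infty$, estimate $|v_r^2|_0=\|v_r\|_{L^4}^2$ by the one-dimensional interpolation inequality $\|v_r\|_{L^4}^2\lesssim|\partial_x v_r|_0^{1/2}|v_r|_0^{3/2}$, and bound the remaining factor by $|\partial_x u^{(1)}_r|_0+|\partial_x u^{(2)}_r|_0$, resp.\ $|u^{(1)}_r|_0+|u^{(2)}_r|_0$, using Cauchy-Schwarz. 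Young's inequality $ab\le\epsilon a^4+c_\epsilon b^{4/3}$ --- applied with $a=|\partial_x v_r|_0^{1/2}$ --- then splits each contribution into a piece $\epsilon|\partial_x v_r|_0^2$, to be absorbed into the left-hand side for $\epsilon$ small, plus a piece of the form $c_\epsilon(|\partial_x u^{(i)}_r|_0)^{4/3}|v_r|_0^2$ or $c_\epsilon(|u^{(i)}_r|_0)^{4/3}|v_r|_0^2$, the exponent $4/3$ being precisely the Young-conjugate of $4$. Using the a priori bound $\sup_r|u^{(1)}_r|_0\le C$ provided by Theorem \ref{EnergyEqualityTheorem} to absorb the zeroth-order norm of $u^{(1)}$ into the constant, one arrives at
\[
|v_t|_0^2+\int_0^t|\partial_x v_r|_0^2\,dr\ \lesssim\ |v_0|_0^2+\int_0^t g(r)\,|v_r|_0^2\,dr,\qquad g(r):=C\big(|\partial_x u^{(1)}_r|_0+|u^{(2)}_r|_0+|\partial_x u^{(2)}_r|_0\big)^{4/3},
\]
with $g\in L^1([0,T])$ by the finite-energy assumption. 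Gronwall's lemma then gives the claimed estimate, and uniqueness is immediate: if $u^{(1)}_0=u^{(2)}_0$ then $v_0=0$, so $v\equiv 0$ on the common interval of existence.

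The main obstacle is not in the analysis carried out above: the genuinely delicate point --- giving rigorous meaning to ``$v^2$'' and to the rough equation it satisfies, which in the smooth case comes from the chain rule but in the rough case demands tensorization (Proposition \ref{ItoTensorProduct}) together with the Feynman-Kac weights $M^N$ and $m$ --- is already settled in Lemma \ref{SquareEquation}. What requires care is the bookkeeping of \emph{which} norm of $v$ to spend against \emph{which} norm of the solutions: one must keep the $|v_r|_0^2$ factor strictly linear, so that Gronwall rather than Bihari-LaSalle applies, while using only the control genuinely available on the existence interval, namely $\partial_x u^{(i)}\in L^2_t L^2_x$ and $u^{(i)}\in L^\infty_t L^2_x$. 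This is what fixes the exponent $4/3$ and the particular combination of norms in the exponential, and also why uniqueness holds unconditionally --- without any smallness assumption and without further restricting the time horizon beyond the interval on which the solutions exist.
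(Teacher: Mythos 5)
Your proposal is correct and follows essentially the same route as the paper: invoke the weighted energy equality of Lemma \ref{SquareEquation}, integrate the nonlinearity by parts onto $v^2$, interpolate $L^4$ between $L^2$ and $H^1$, and apply Young with exponents $4$ and $4/3$ followed by Gronwall. The only (immaterial) difference is that you factor the nonlinearity symmetrically as $-\tfrac12 v(u^{(1)}+u^{(2)})$ instead of the paper's $u^{(1)}\partial_x v + v\,\partial_x u^{(2)}$, which is why a $|u^{(1)}_r|_0$ factor appears that you then absorb into the constant; one could equally leave it inside the exponential.
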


\begin{proof}
With $F(u) = - \frac{1}{2} u^2$ we rewrite \eqref{EnergyForDifference} as:
\begin{equation} 
 (v_t^2, m_t) + \int_0^t 2( |\partial _x v_r|^2, m_r) dr = (v_0^2,m_0)  - 2  \int_0^t ( u_r^{(1)} \partial _xu^{(1)}_r - u_r^{(2)} \partial _x u^{(2)}_r ,v_r m_r) dr ,
\end{equation}
where as before we denote by 
$
v_r:=u^{(1)}-u^{(2)}.
$

From \eqref{bounds:u_m}, \eqref{bounds:u_m_2} and integration by parts, we have this time
\begin{align*}
|v_t|^2_0  & + \int_0^t  |\partial _x v_r|^2_0 dr\\
 & \lesssim (v_0^2,m_0) -  2  \int_0^t ( u_r^{(1)} \partial _x v_r + v_r \partial _x u^{(2)}_r ,v_r m_r) dr   \\
 & = (u_0^2,m_0) +    \int_0^t (  (v_r)^2, \partial _x(u_r^{(1)} m_r)) dt + 2 \int_0^t ( (v_r)^2  \partial _x u^{(2)}_r , m_r) dr .
\end{align*}
Next, interpolating $L^4$ between $L^2$ and $H^1$ yields
\begin{align*} 
|v_t^2|_0  & + \int_0^t  |\partial _x v_r|^2_0 dr\\ 
 & \lesssim |v_0|_0^2  +  \int_0^t |v_r|_{L^4}^2 \big( |\partial _x u_r^{(1)} |_0 + |u^{(2)}_r|_0 + |\partial _x u_r^{(2)}|_0\big) dr  \\
& \lesssim |v_0|_0^2  +  \int_0^t |\partial _x v_r|_0^{1/2} |v_r|_0^{3/2} \big( |\partial _x u_r^{(1)} |_0 + |u^{(2)}_r|_0 + |\partial _x u_r^{(2)}|_0\big) dr  \\
&  \lesssim |v_0|_0^2  +  \epsilon \int_0^t |\partial _x v_r|_0^{2}dr + C_{\epsilon} \int_0^t |v_r|_0^{2} \big( |\partial _x u_r^{(1)} |_0 + |u^{(2)}_r|_0 + |\partial _x u_r^{(2)}|_0\big)^{4/3} dr .
\end{align*}
Choosing $\epsilon$ small enough we get
$$
|v_t^2|_0  + \int_0^t  |\partial _x v_r|^2_0 dr  \lesssim |v_0|_0^2  +  C_{\epsilon} \int_0^t |v_r|_0^{2} ( |\partial _x u_r^{(1)} |_0 + |u^{(2)}_r|_0 + |\partial _x u_r^{(2)}|_0)^{4/3} dr .
$$
Since $r \mapsto ( |\partial _x u_r^{(1)} |_0 + |u^{(2)}_r|_0 + |\partial _x u_r^{(2)}|_0)^{4/3}$ is integrable over $[0,T]$ by assumption, we get from Gronwall Lemma
$$
|v_t|^2_0  + \int_0^t  |\partial _x v_r|^2_0 dr
\lesssim |v_0|^2_0 \exp\Big\{ C \int_0^t  ( |\partial _x u_r^{(1)} |_0 + |u^{(2)}_r|_0 + |\partial _x u_r^{(2)}|_0)^{4/3} dr \Big\},
$$
which shows the claimed estimate. 
\end{proof}

\appendix

\section{Appendix} 

\subsection{Compact embedding results}

The following compact embedding result is comparable to the fractional version of the  Aubin-Lions compactness result (\cite{Aubin}, \cite{Lions}).

\begin{lemma} \label{CompactnessLemma}
Suppose $\bH_1 \subset \bH \subset \bH_{-1}$ are Banach spaces such that $\bH_1$ is compactly embedded into $\bH$ and $\bH_1$ and $\bH_{-1}$ are reflexive.

For any $\kappa > 0$ the set 
	$$
	X = L^2([0,T];\bH_1) \cap C^{\kappa}([0,T]; \bH_{-1} )$$
	is compactly embedded into 
	$L^2([0,T]; \bH)$.
	\end{lemma}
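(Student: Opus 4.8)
The plan is to prove that every bounded sequence $(u^n)_{n\ge1}$ in $X=L^2([0,T];\bH_1)\cap C^{\kappa}([0,T];\bH_{-1})$ has a subsequence converging in $L^2([0,T];\bH)$; since the inclusion $X\hookrightarrow L^2([0,T];\bH)$ is trivially continuous, this is exactly the asserted compactness. Write $M$ for a uniform bound, so $\|u^n\|_{L^2([0,T];\bH_1)}\le M$ and $\|u^n_t-u^n_s\|_{\bH_{-1}}\le M|t-s|^{\kappa}$ for all $s,t$. Two standard ingredients will be combined. First, Ehrling's lemma: since $\bH_1$ is compactly embedded in $\bH$ and $\bH$ is continuously embedded in $\bH_{-1}$, for every $\epsilon>0$ there is $C_\epsilon>0$ with
\[
\|w\|_{\bH}\le \epsilon\,\|w\|_{\bH_1}+C_\epsilon\,\|w\|_{\bH_{-1}},\qquad w\in\bH_1 .
\]
Second, a time-discretization that converts the H\"older-in-time control in $\bH_{-1}$ into a substitute for the usual bound on $\partial_t u^n$ in the Aubin--Lions scheme.

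For $h\in(0,T]$ set $I_j^h:=[jh,(j+1)h)\cap[0,T]$, $j=0,\dots,N_h-1$, and define the time-average $u^n_h(t):=\frac{1}{|I_j^h|}\int_{I_j^h}u^n_s\,ds$ for $t\in I_j^h$. I would first record three estimates, all uniform in $n$: (a) by Jensen's inequality $\|u^n_h\|_{L^2([0,T];\bH_1)}\le\|u^n\|_{L^2([0,T];\bH_1)}\le M$; (b) since $u^n_h(t)-u^n(t)$ is an average of increments $u^n_s-u^n_t$ with $|s-t|\le h$, one gets $\|u^n-u^n_h\|_{L^2([0,T];\bH_{-1})}\le M\sqrt{T}\,h^{\kappa}$; (c) for fixed $h$, $u^n_h$ is a step function in time with the finitely many values $v^n_j:=\frac{1}{|I_j^h|}\int_{I_j^h}u^n_s\,ds$, each bounded in $\bH_1$ by $M h^{-1/2}$.

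Next, for fixed $h$, compactness of $\bH_1\hookrightarrow\bH$ together with a finite diagonal extraction over $j=0,\dots,N_h-1$ produces a subsequence along which $v^n_j\to\bar v_j$ in $\bH$ for every $j$, whence $u^n_h$ converges in $L^2([0,T];\bH)$. Carrying this out for $h=1/k$, $k\in\N$, and passing to a further diagonal subsequence (still written $u^n$), I obtain one subsequence for which $u^n_{1/k}$ converges in $L^2([0,T];\bH)$ for every $k$. Applying Ehrling's lemma to $u^n-u^n_h$ and to $u^m-u^m_h$ and using (a), (b) yields, for all $\epsilon>0$ and all $h=1/k$,
\[
\|u^n-u^m\|_{L^2([0,T];\bH)}\le 4M\epsilon+2C_\epsilon M\sqrt{T}\,h^{\kappa}+\|u^n_h-u^m_h\|_{L^2([0,T];\bH)} .
\]
Given $\delta>0$, choose first $\epsilon$ with $4M\epsilon<\delta/3$, then $k$ with $2C_\epsilon M\sqrt{T}\,(1/k)^{\kappa}<\delta/3$, and finally $n,m$ large so that the last term is $<\delta/3$ along the diagonal subsequence. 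Thus $(u^n)$ is Cauchy in the complete space $L^2([0,T];\bH)$ and converges there.

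The main obstacle is precisely what the averaging step resolves: a function bounded only in $L^2([0,T];\bH_1)$ need not be pointwise bounded in $\bH_1$, so one cannot invoke compactness of $\bH_1\hookrightarrow\bH$ by sampling $u^n$ at fixed times; replacing point values by short-time averages restores an $\bH_1$ bound, while the $C^\kappa([0,T];\bH_{-1})$ regularity controls the error of this replacement. This interplay is exactly what makes the H\"older norm an effective stand-in for a time-derivative bound. (The reflexivity of $\bH_1$ and $\bH_{-1}$ is not needed for this argument; it is convenient only when one additionally wants to identify the limit with a prescribed weak limit in $L^2([0,T];\bH_1)$.)
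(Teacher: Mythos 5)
Your argument is correct, but it takes a genuinely different route from the paper. The paper disposes of the lemma in two lines: it observes that $C^{\kappa}([0,T];\bH_{-1})\subset W^{\kappa',2}([0,T];\bH_{-1})$ for any $\kappa'<\kappa$ and then invokes the fractional Aubin--Lions compactness theorem of Flandoli--Gatarek (\cite[Theorem 2.1]{flandoli1995martingale}), whose hypotheses are precisely where the reflexivity of $\bH_1$ and $\bH_{-1}$ enters. You instead give a self-contained proof in the classical Aubin--Lions style: Ehrling's lemma (using $\bH_1\hookrightarrow\hookrightarrow\bH\hookrightarrow\bH_{-1}$) combined with Steklov time-averages, where the $C^{\kappa}$ seminorm in $\bH_{-1}$ controls the averaging error by $Mh^{\kappa}$ and replaces the usual bound on the time derivative, followed by a double diagonal extraction (over the finitely many averages for fixed $h$, then over $h=1/k$) to get a Cauchy sequence in $L^2([0,T];\bH)$. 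All the estimates check out (the only cosmetic slip is that the pointwise $\bH_1$-bound on the averages should be $M|I_j^h|^{-1/2}$ rather than $Mh^{-1/2}$ when the last interval is shorter than $h$, which is immaterial since only finiteness for fixed $h$ is used). What each approach buys: the paper's proof is essentially a citation and is the natural choice in context, while yours is elementary, avoids fractional Sobolev spaces altogether, and, as you note, does not use reflexivity, so it proves a marginally more general statement.
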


\begin{proof}

Notice that $C^{\kappa}([0,T]; \bH_{-1}) \subset W^{ \kappa', 2}([0,T]; \bH_{-1})$ for any $\kappa < \kappa'$, where $W^{ \kappa', 2}([0,T]; \bH_{-1})$ denotes the set of functions $g$ such that
$$
\int_{[0,T]^2} \frac{ |\delta g_{st}|^2}{|t-s|^{1 + 2 \kappa' }} dt ds < \infty .
$$
From \cite[Theorem 2.1]{flandoli1995martingale} we get that $X$ is compactly embedded in $L^2([0,T]; \bH)$.
\end{proof}

To take the limit in the rough terms of \eqref{MainEq} we need to have a limit pointwise in time. Since the noise is linear, it is enough to have weak limits in space.

\begin{lemma}\label{lem:weakc}
With the same assumptions as above, and suppose $\bH$ is a Hilbert space and there exists a continuous bilinear map $(\cdot, \cdot) : \bH_{-1} \times \bH_1 \rightarrow \R$ that coincides with the inner-product on $\bH$ when restricted to $ \bH \times \bH_1$. Suppose in addition that $\bH_1$ is separable and dense in $\bH$. Then the set 
	$$
	Y=L^\infty([0,T];\bH )\cap C^{\kappa}([0,T]; \bH_{-1} )
	$$
is compactly embedded into  $C([0,T];\bH_w)$, the space of weakly continuous functions with values in $\bH$. 
\end{lemma}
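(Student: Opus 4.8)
The plan is to establish the embedding by sequential compactness, via a parametrized Arzel\`a--Ascoli argument. Let $(u^n)_{n\ge1}$ be bounded in $Y$, say
$M:=\sup_n\big(\|u^n\|_{L^\infty([0,T];\bH)}+[\delta u^n]_{\kappa}\big)<\infty$, the H\"older seminorm being taken in $\bH_{-1}$. The goal is to extract a subsequence and a limit $u\in C([0,T];\bH_w)$ such that $(u^{n_j}_\cdot,h)\to(u_\cdot,h)$ uniformly on $[0,T]$ for every $h\in\bH$. Since $\bH_1$ is separable and densely embedded in $\bH$, the space $\bH$ is itself separable; fix once and for all a countable set $\{h_k\}_{k\ge1}\subset\bH_1$ that is dense in $\bH$.

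First I would run Arzel\`a--Ascoli for the scalar maps $g^n_k:t\mapsto(u^n_t,h_k)$. Using that the bilinear pairing is continuous on $\bH_{-1}\times\bH_1$ and agrees with the inner product on $\bH\times\bH_1$, one gets the uniform bound $|g^n_k(t)|\le\|u^n_t\|_\bH\,\|h_k\|_\bH\lesssim M\|h_k\|_\bH$, and the equicontinuity estimate $|g^n_k(t)-g^n_k(s)|=|(\delta u^n_{st},h_k)|\lesssim[\delta u^n]_{\kappa}\,\|h_k\|_{\bH_1}\,|t-s|^\kappa\le C_k|t-s|^\kappa$. Hence, for each fixed $k$, the family $(g^n_k)_n$ is precompact in $C([0,T];\R)$, and a diagonal extraction over $k$ yields a subsequence (still denoted $u^n$) with $g^n_k\to g_k$ in $C([0,T];\R)$ for every $k$, each limit $g_k$ being again $\kappa$-H\"older.

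Next I would identify the limit path. For each fixed $t$, the sequence $(u^n_t)_n$ is bounded in the Hilbert space $\bH$, hence has weakly convergent subsequences; since $(u^n_t,h_k)=g^n_k(t)\to g_k(t)$ for all $k$ and $\{h_k\}$ is dense in $\bH$, the weak limit is unique, so $u^n_t\rightharpoonup u_t$ in $\bH$ for a well-defined $u_t\in\bH$ with $\|u_t\|_\bH\le M$; thus $u\in L^\infty([0,T];\bH)$ and $(u_t,h_k)=g_k(t)$. Continuity of $u$ into $\bH_w$ follows: $t\mapsto(u_t,h_k)=g_k(t)$ is continuous for each $k$, and for general $h\in\bH$ one approximates $h$ by $h_k$ in $\bH$, using $\sup_t|(u_t,h)-(u_t,h_k)|\le M\|h-h_k\|_\bH$, so $t\mapsto(u_t,h)$ is a uniform limit of continuous functions, hence continuous; therefore $u\in C([0,T];\bH_w)$.

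Finally, for convergence in $C([0,T];\bH_w)$ I must show $\sup_{t\in[0,T]}|(u^n_t-u_t,h)|\to0$ for every $h\in\bH$. For $h=h_k$ this is exactly $\|g^n_k-g_k\|_{C([0,T])}\to0$, already obtained. For general $h$, given $\varepsilon>0$ choose $h_k$ with $\|h-h_k\|_\bH<\varepsilon/(4M)$; then $\sup_t|(u^n_t-u_t,h)|\le\sup_t|(u^n_t-u_t,h_k)|+2M\|h-h_k\|_\bH$, and letting $n\to\infty$ gives $\limsup_n\sup_t|(u^n_t-u_t,h)|\le\varepsilon/2$, proving the claim. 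The argument is essentially a time-parametrized Arzel\`a--Ascoli/Strauss lemma; the only point needing care is the systematic interplay between the $\bH_1$-pairing (which gives the H\"older equicontinuity of the scalar traces) and the $\bH$-inner product together with the uniform $L^\infty_t\bH$-bound (which give the uniform estimates and allow passage to arbitrary test elements $h\in\bH$). No genuine analytic obstacle arises beyond this bookkeeping.
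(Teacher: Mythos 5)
Your proof is correct and follows essentially the same route as the paper: the key ingredients (the $\kappa$-H\"older equicontinuity of $t\mapsto(u_t,h)$ for $h\in\bH_1$ via the continuous pairing, density of $\bH_1$ in $\bH$, and weak compactness of bounded sets in $\bH$) are exactly those behind the paper's estimate and its appeal to the abstract Arzel\`a--Ascoli theorem. The only difference is presentational: you unpack that citation into an explicit diagonal extraction over a countable dense family, which is the same argument carried out by hand.
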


\begin{proof}
Let $g\in Y$. First, we will show that for all $\varphi\in \bH$ the mapping
\begin{equation}\label{eq:cont}
t\mapsto(g_t,\varphi)\in C([0,T]).
\end{equation}
To this end, we observe that since $g\in L^\infty([0,T];\bH )$ it follows that there exists $R>0$ such that $g_t\in B_R$ for all $t\in[0,T]$, where $B_R\subset \bH$ is a ball of radius $R$.
Take any family $(h_n)_{n\in \mathbb{N}}$ that belong to $\bH_{1}$ and their finite linear combinations are dense in $\bH$. Then
\begin{align}\label{eqcon}
\left| (g_t, \varphi)-  (g_s,\varphi)\right| &\leq 
\left| \left(g_t-g_s, \sum\nolimits_{n\leq M} \beta_n h_n  \right)\right| + 
\left| \left(g_t-g_s,\varphi - \sum\nolimits_{n\leq M} \beta_n h_n ,  \right)\right| \nonumber\\ 
&\leq 
\left| \left(g_t-g_s, \sum\nolimits_{n \leq M} \beta_n h_n , \right)\right| 
+ R \left| \varphi - \sum\nolimits_{n \leq M} \beta_n h_n \right|_{0}\nonumber\\
&\leq c(M) |t-s|^\kappa+ R \left|\varphi - \sum\nolimits_{n \leq M} \beta_n h_n \right|_{0}, 
\end{align}
where the last term can be made small uniformly for all $s,t \in [0,T]$ by taking suitable $\beta_{{m}}$ and $M$ large enough. Hence \eqref{eq:cont} follows. The compactness of the embedding follows from the  abstract Arzel\` a--Ascoli theorem. Indeed the ball $B_R$ is relatively weakly 
compact, and the desired equi-continuity follows from (\ref{eqcon}).
\end{proof}

\subsection{Bihari-LaSalle}

We recall the Bihari-LaSalle inequality, which is a non-linear generalization of the Gronwall's lemma, due to \cite{Bihari} and \cite{LaSalle}.

\begin{theorem} \label{BihariTheorem}
Suppose $q > 1$ and two positive functions $x,k : [0,T] \rightarrow \R_+$ satisfy
$$
x_t \leq x_0 + \int_0^t k_s x_s^q ds .
$$
Then for all $t$ such that $(q-1) x_0 \int_0^t k_s ds < 1$ we have the following estimate
$$
x_t \leq \frac{x_0}{ \left( 1 -(q-1)x_0\int_0^t k_s ds \right)^{1/(q-1)} } .
$$

\end{theorem}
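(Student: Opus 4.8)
The plan is to run the classical comparison argument: reduce the integral inequality to the explicitly solvable separable ODE $\dot w = k\,w^{q}$. First I would assume $x_0>0$ (the case $x_0=0$ is a limiting case, treated at the end) and introduce the ``accumulated'' majorant
$$
y_t := x_0 + \int_0^t k_s x_s^q\,ds ,
$$
so that $0\le x_t\le y_t$ for all $t$ directly from the hypothesis. Since $s\mapsto k_s x_s^q$ is (locally) integrable, $y$ is absolutely continuous and non-decreasing, with $y_0=x_0$ and $\dot y_t=k_t x_t^q$ for a.e.\ $t$; combining $0\le x_t\le y_t$ with the monotonicity of $r\mapsto r^q$ on $[0,\infty)$ yields the a.e.\ differential inequality $\dot y_t\le k_t\,y_t^{q}$.

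Next, since $y_t\ge x_0>0$, the function $t\mapsto y_t^{1-q}$ is again absolutely continuous (composing $y$ with $r\mapsto r^{1-q}$, which is $C^1$ on any set bounded away from $0$), and its a.e.\ derivative is $(1-q)\,y_t^{-q}\dot y_t\ge (1-q)\,k_t$, the inequality flipping because $1-q<0$. Integrating over $[0,t]$ gives
$$
y_t^{1-q} \ \ge\ x_0^{1-q} - (q-1)\int_0^t k_s\,ds .
$$
On the range of $t$ for which the right-hand side is strictly positive --- i.e.\ on the interval singled out in the statement --- both sides are positive, so applying the map $r\mapsto r^{-1/(q-1)}$, which is decreasing on $(0,\infty)$, and recalling that $x_t\le y_t$, produces the asserted bound.

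It remains to dispose of the degenerate situations. If $x_0=0$ one applies the above with $x_0$ replaced by an arbitrary $\varepsilon>0$ --- the inequality $x_t\le \varepsilon+\int_0^t k_s x_s^q\,ds$ clearly holds --- and then lets $\varepsilon\downarrow 0$; if $\int_0^t k_s\,ds=0$ the hypothesis gives $x_t\le x_0$ outright. I do not expect a genuine obstacle in this proof: the content is entirely the comparison idea, and the only point that requires a little care is the measure-theoretic bookkeeping in the two steps above --- justifying the fundamental-theorem-of-calculus manipulations under nothing more than local integrability of $k\,x^{q}$, which is exactly what absolute continuity of $y$, and then of $y^{1-q}$ (here one uses $y\ge x_0>0$), delivers.
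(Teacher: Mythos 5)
Your proof is correct and takes essentially the same route as the paper: both introduce the majorant $y_t = x_0 + \int_0^t k_s x_s^q\,ds$, note that $\tfrac{d}{dt}\,y_t^{1-q} = (1-q)\,k_t x_t^q\, y_t^{-q} \ge (1-q)k_t$, integrate, and invert, your only additions being the (harmless) extra care about $x_0=0$ and absolute continuity. Be aware, though, that what this argument actually yields — in your write-up and in the paper's — is the classical bound $x_t \le x_0\bigl(1-(q-1)\,x_0^{\,q-1}\int_0^t k_s\,ds\bigr)^{-1/(q-1)}$ on the interval where $(q-1)\,x_0^{\,q-1}\int_0^t k_s\,ds<1$, so the exponent on $x_0$ in the statement as printed is a typo which your final sentence ("produces the asserted bound") silently inherits.
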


\begin{proof}
By hypothesis
$$
\frac{ x_t}{ x_0 + \int_0^t k_s x_s^q ds } \leq 1
$$
which gives
$$
\frac{ k_t x_t^q}{\left( x_0 + \int_0^t k_s x_s^q ds \right)^q } \leq k_t . 
$$
The above right hand side is the time derivative of $(1- q)^{-1} \left(x_0 + \int_0^t k_s x_s^q ds \right)^{-q+1}$ so that 
$$
(1- q)^{-1} \left(x_0 + \int_0^t k_s x_s^q ds \right)^{-q+1} \leq (1- q)^{-1} x_0^{-q + 1} + \int_0^t k_s ds .
$$
Since $q > 1$ we get
$$
x_0 + \int_0^t k_s x_s^q ds \leq  \left( x_0^{-q + 1} + (1-q)\int_0^t k_s ds \right)^{1/(1-q)} .
$$
The result follows.
\end{proof}

\subsection{Sewing Lemma}

For the reader's convenience, we recall the Sewing Lemma. Other formulations of this classical result can be found, e.g.\ in \cite{FH14} or \cite{Gubinelli04}.

\begin{lemma}[\textbf{Sewing lemma}] \label{sewingLemma}
Assume $G : \Delta(T) \rightarrow E$ be such that 
$$
|\delta G_{s \theta t}| \lesssim |t-s|^{a}, \quad 0 \leq s \leq \theta \leq t \leq T
$$ 
for some $a > 1$, and denote by $[\delta G]_a$ the smallest constant satisfying the above bound. 

Then there exists a unique pair $I: [0,T] \rightarrow E$ and $I^{\natural} : \Delta(T) \rightarrow E$ satisfiying
$$
\delta I_{st} = G_{st} + I_{st}^{\natural}
$$
where 
$$
|I_{st}^{\natural}| \leq C_a [\delta G]_a |t-s|^{a}.
$$
for some constant $C_a$ only depending on $a$. In fact, $I$ is defined via the Riemann type integral approximation
\begin{equation} \label{RiemannSum}
I_t = \lim_{ |\pi| \rightarrow 0} \sum_{ (u,v) \in \pi } G_{u v} .
\end{equation}
The above limit is taken along any sequence of partitions $\pi$ of $[0,t]$ whose mesh converges to 0.

Moreover, if $|G_{st}| \lesssim |t-s|^{b}$ for some $b>0$ we have $|\delta I_{st}| \lesssim |t-s|^{b}$.

\end{lemma}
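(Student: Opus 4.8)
The plan is to build $I$ directly from the Riemann-type sums in \eqref{RiemannSum} and control them uniformly. For a partition $\pi=\{s=t_0<t_1<\dots<t_N=t\}$ of an interval $[s,t]\subset[0,T]$, write $I^\pi_{st}:=\sum_{k=0}^{N-1}G_{t_kt_{k+1}}$. The first and central step is the bound
$$
\bigl|I^\pi_{st}-G_{st}\bigr|\le C_a\,[\delta G]_a\,|t-s|^a\qquad\text{for \emph{every} partition $\pi$ of $[s,t]$,}
$$
with $C_a$ depending on $a$ only. I would prove this by a ``remove the shortest double interval'' induction on $N$: a short telescoping computation gives $\sum_{i=1}^{N-1}(t_{i+1}-t_{i-1})\le 2|t-s|$, so for $N\ge2$ there is an interior point $t_i$ with $t_{i+1}-t_{i-1}\le 2|t-s|/(N-1)$; deleting it changes $I^\pi_{st}$ by exactly $\delta G_{t_{i-1}t_it_{i+1}}$, which is bounded by $[\delta G]_a\bigl(2|t-s|/(N-1)\bigr)^a$. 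Iterating down to the trivial partition $\{s,t\}$, whose sum is $G_{st}$, and summing the bounds yields the estimate with $C_a=2^a\sum_{k\ge1}k^{-a}$; the hypothesis $a>1$ is precisely what makes this series finite.

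Next I would use this to show that the net $(I^\pi_{0t})_\pi$, indexed by partitions of $[0,t]$ ordered by refinement, is Cauchy, hence convergent by completeness of $E$. Given two partitions $\pi,\pi'$ of $[0,t]$, applying the uniform estimate on each $[u,v]\in\pi$ to the restriction of the common refinement $\pi\vee\pi'$ gives $\bigl|I^{\pi\vee\pi'}_{0t}-I^\pi_{0t}\bigr|\le C_a[\delta G]_a\sum_{[u,v]\in\pi}|v-u|^a\le C_a[\delta G]_a\,|\pi|^{a-1}\,t$, where $|\pi|$ is the mesh, and likewise with $\pi'$; hence all these sums converge to a common limit as the mesh tends to $0$. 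Set $I_t:=\lim_{|\pi|\to0}I^\pi_{0t}$, so that $I_0=0$ (empty sum) and, inserting the point $s$ into the partitions, $\delta I_{st}=I_t-I_s=\lim_{|\pi|\to0}I^\pi_{st}$. Defining $I^\natural_{st}:=\delta I_{st}-G_{st}$ and passing to the limit in the uniform estimate yields $|I^\natural_{st}|\le C_a[\delta G]_a|t-s|^a$, which is the asserted decomposition; running the same argument along a fixed sequence of partitions of vanishing mesh (e.g.\ the dyadic ones) justifies the Riemann-sum description \eqref{RiemannSum}.

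For uniqueness, if $(\tilde I,\tilde I^\natural)$ is another such pair then $\delta(I-\tilde I)_{st}=I^\natural_{st}-\tilde I^\natural_{st}$ is $O(|t-s|^a)$ with $a>1$, and any $E$-valued path with such increments is constant (telescope $(I-\tilde I)_t-(I-\tilde I)_s$ along a partition of mesh $\to0$ and bound by $\lesssim|\pi|^{a-1}|t-s|\to0$); hence $I^\natural=\tilde I^\natural$ and $I-\tilde I$ is constant, so the pair is unique once $I_0$ is fixed, and the $I$ of \eqref{RiemannSum} has $I_0=0$. Finally, if $|G_{st}|\lesssim|t-s|^b$ then $|\delta I_{st}|\le|G_{st}|+|I^\natural_{st}|\lesssim|t-s|^b+|t-s|^a$, which on $[0,T]$ is $\lesssim|t-s|^b$ when $b\le a$ (and in general gives the exponent $a\wedge b$), since $|t-s|\le T$.

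I expect the main obstacle to be the first step: obtaining $|I^\pi_{st}-G_{st}|\lesssim|t-s|^a$ uniformly over \emph{all} partitions rather than over a fixed nested family, since this single uniform bound is what simultaneously produces the remainder estimate for $I^\natural$ and guarantees that the Riemann-sum limit exists and does not depend on the chosen sequence of partitions. Once that estimate is in hand, the refinement/Cauchy argument, the identity $\delta I_{st}=\lim_{|\pi|\to0}I^\pi_{st}$, and the uniqueness argument are all routine, the only other essential ingredient being completeness of $E$ to pass to the limit.
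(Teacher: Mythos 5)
Your proposal is correct. Note that the paper does not actually prove Lemma \ref{sewingLemma}: it recalls it as a classical result and refers to \cite{FH14} and \cite{Gubinelli04}, so there is no internal proof to compare against. What you give is a complete, self-contained version of the standard argument: the Young-type ``delete the point with the smallest double interval'' estimate, uniform over \emph{all} partitions, which (exactly as you say) simultaneously yields the remainder bound $|I^\natural_{st}|\leq C_a[\delta G]_a|t-s|^a$ with $C_a=2^a\sum_{k\geq1}k^{-a}$ and, via common refinements and the mesh bound $\sum_{[u,v]\in\pi}|v-u|^a\leq|\pi|^{a-1}t$, the Cauchy property and independence of the limit \eqref{RiemannSum} from the chosen sequence of partitions; this is essentially the route of the cited references (some expositions, e.g.\ \cite{FH14}, first construct $I$ along dyadic partitions and handle general partitions separately, whereas your single uniform estimate does both at once). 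Two small points. First, as you correctly observe, $I$ is only unique up to an additive constant; the statement implicitly normalizes through \eqref{RiemannSum} (so $I_0=0$), and your remark resolves this. Second, for the final ``Moreover'' claim your argument as written gives the exponent $a\wedge b$, which is $|t-s|^b$ only when $b\leq a$; for $b>a$ one simply notes that $|G_{st}|\lesssim|t-s|^b$ forces $[\delta G]_b<\infty$ with $b>1$, so running the same sewing estimate at level $b$ (with the same $I$, since the Riemann-sum limit is unchanged) gives $|I^\natural_{st}|\lesssim|t-s|^b$ and hence $|\delta I_{st}|\lesssim|t-s|^b$ in full generality; in the paper's applications one anyway has $b\leq1<a$, so this is a cosmetic completion rather than a gap.
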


As a corollary we get the following. 
\begin{corollary} \label{SewingCorollary}
Retain the assumptions in the previous lemma. Then, if $|G_{st}| \lesssim |t-s|^b$ for some $b > 1$ we have
$$
[G]_a \leq C_a [\delta G]_a
$$
for some constant $C_a$ only depending on $a$.
\end{corollary}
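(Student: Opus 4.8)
The plan is to feed $G$ itself into the Sewing Lemma (Lemma~\ref{sewingLemma}) and then show that the resulting primitive vanishes identically, so that $G$ coincides up to sign with the sewing remainder, which is controlled by $[\delta G]_a$ by construction. Concretely, I would first apply Lemma~\ref{sewingLemma} using only the bound $|\delta G_{s\theta t}|\lesssim|t-s|^a$ with $a>1$: this yields a pair $(I,I^{\natural})$ with
\[
\delta I_{st}=G_{st}+I^{\natural}_{st},\qquad |I^{\natural}_{st}|\le C_a\,[\delta G]_a\,|t-s|^a,
\]
where $I$ is the Riemann-sum limit \eqref{RiemannSum}; in particular $I_0=0$, the only partition of $[0,0]$ being trivial.

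Next I would exploit the second hypothesis $|G_{st}|\lesssim|t-s|^b$ with $b>1$ to show that $I\equiv 0$. By the ``Moreover'' clause of Lemma~\ref{sewingLemma} we already know $|\delta I_{st}|\lesssim|t-s|^b$; since $b>1$, telescoping along the uniform partition $\{s=t_0<\dots<t_n=t\}$ gives $|\delta I_{st}|\le\sum_i|\delta I_{t_i t_{i+1}}|\lesssim n\big((t-s)/n\big)^b=(t-s)^b n^{1-b}\to 0$ as $n\to\infty$, so $\delta I\equiv 0$, and combined with $I_0=0$ this forces $I\equiv 0$. Substituting back into $\delta I_{st}=G_{st}+I^{\natural}_{st}$ gives $G_{st}=-I^{\natural}_{st}$, hence $|G_{st}|=|I^{\natural}_{st}|\le C_a\,[\delta G]_a\,|t-s|^a$ for all $(s,t)\in\Delta(T)$, which is exactly $[G]_a\le C_a[\delta G]_a$ with the same constant $C_a$ as in the Sewing Lemma. (Equivalently, one may bypass $I$ altogether and argue directly that the Riemann sums $\sum_{(u,v)\in\pi}G_{uv}$ vanish in the limit, by the same mesh estimate applied to $|G_{uv}|\lesssim|v-u|^b$, and then invoke the uniqueness part of the Sewing Lemma.)

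The only point requiring a little care is the normalization $I_0=0$: without it one controls $G$ only up to an additive constant, which is why it is cleaner to use the explicit Riemann-sum representation of $I$ rather than the bare existence statement. Everything else is a routine telescoping/mesh estimate, so I do not expect a genuine obstacle here.
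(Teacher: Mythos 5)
Your proof is correct and follows the paper's argument exactly: the paper also applies Lemma \ref{sewingLemma} to $G$, observes that $b>1$ forces $I\equiv 0$, and concludes $G_{st}=-I^{\natural}_{st}$. You have merely spelled out the detail (vanishing of the Riemann sums, or equivalently that a two-parameter increment bounded by $|t-s|^b$ with $b>1$ telescopes to zero) that the paper leaves implicit.
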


\begin{proof}
Construct $(I, I^{\natural})$ as in Lemma \ref{sewingLemma}. Since $b>1$ we necessarily have $I=0$ and the result follows. 
\end{proof}

\end{document}